\documentclass[11pt]{article}
\usepackage{latexsym}
\usepackage{amssymb}
\usepackage{amsfonts,amssymb,amsmath,mathtools,amsthm,comment,bbm}
\usepackage{bbold} 
\usepackage{xcolor}
\usepackage[colorlinks=true, pdfstartview=FitV, linkcolor=blue, citecolor=blue, urlcolor=blue]{hyperref}
\usepackage{url, hypcap}
\usepackage{faktor}

\hypersetup{colorlinks=true, citecolor=darkblue, linkcolor=darkblue}

\definecolor{darkblue}{rgb}{0.0,0,0.7} 

\setlength{\oddsidemargin}{0in}
\setlength{\topmargin}{-.7in}
\setlength{\textheight}{9.2in}
\setlength{\textwidth}{6.7in}


\def\GM{\operatorname{GM}}
\def\RLM{\operatorname{RLM}}
\newcommand{\SP}{\operatorname{SP}}
\newcommand{\CS}{\operatorname{CS}}
\newcommand{\Pl}{\mathsf{Pl}}

\newcommand{\Res}{\operatorname{Res}}

\usepackage{tikz-cd}
\usepackage{tikz}
\usetikzlibrary{arrows,automata}
\usetikzlibrary{patterns,snakes}
\usetikzlibrary{shapes.geometric,calc}
\tikzstyle{arrow}=[thick, ->, >=stealth]
\tikzset{ edge/.style={->,> = latex'} }
\usetikzlibrary{calc,through,backgrounds,shapes,matrix}
\usepackage{graphicx}

\usepackage[colorinlistoftodos]{todonotes}

\newtheorem{thm}{Theorem}[section]
\newtheorem{cor}[thm]{Corollary}
\newtheorem{lem}[thm]{Lemma}
\newtheorem{prop}[thm]{Proposition}
\theoremstyle{definition}
\newtheorem{definition}[thm]{Definition}

\newtheorem{remark}[thm]{Remark}

\numberwithin{equation}{section}
\numberwithin{thm}{section}

\title{Decidability of Krohn--Rhodes complexity for all finite semigroups and automata}

\author{Stuart Margolis, John Rhodes, Anne Schilling}

\date{\today
\footnote{MSC classes: Primary 20M07, 20M10, 20M20, Secondary 54H15}
}

\begin{document}
\maketitle

\begin{abstract}
The Krohn--Rhodes Theorem proves that a finite semigroup divides a wreath product of groups and aperiodic semigroups. 
Krohn--Rhodes complexity equals the minimal number of groups that are needed. Determining an algorithm to compute 
complexity has been an open problem for more than 50 years. The main result of this paper proves that it is decidable if a 
semigroup has complexity $k$ for all $k \geqslant 0$. This builds on our previous work for complexity 1. In that paper we 
proved using profinite methods and results on free Burnside semigroups by McCammond and others that the lower bound from a 2012 paper by Henckell, 
Rhodes and Steinberg is precise for complexity 1. In this paper we define an improved version of the lower bound from the 2012 paper and 
prove that it is exact for arbitrary complexity.
\end{abstract}

\section{Introduction}

The Krohn--Rhodes Theorem, originally known as the Prime Decomposition Theorem, is one of the most important theorems of finite 
semigroup theory. It first appeared in print in~\cite{KR.1965} and has since appeared in many monographs devoted to finite semigroups 
and automata. See for example~\cite{Arbib.1968, TilsonXII, Lallement, RS.2009}. The theorem states that every finite semigroup 
$S$ divides, that is, is a quotient of a subsemigroup, of a wreath product of groups and the semigroup $U$ consisting of two right-zeros 
and an identity. One can choose the groups to be simple groups that divide $S$. A semigroup is called prime if whenever it divides the 
wreath product of two semigroups, then it divides one of the factors. The theorem also shows that a semigroup is prime if and only if it 
is a simple group or is a subsemigroup of $U$.

It follows that a semigroup is aperiodic, that is, has only trivial subgroups if and only if it divides a wreath product of copies of $U$. 
One now defines the (Krohn--Rhodes) complexity $Sc$ of the finite semigroup $S$ to be the least number of groups needed in any 
wreath product decomposition of $S$ as a divisor of wreath products of groups and aperiodic semigroups. It is known that there are 
semigroups of arbitrary complexity. Indeed, the full transformation semigroup $T_k$ on $k$ elements has complexity $k-1$. Therefore, 
if $C_{k}$ is the collection of all semigroups of complexity at most $k$, $C_{k}$ is properly contained in $C_{k+1}$ for all $k$.

\subsection{History and known results}
Complexity was first defined and developed in~\cite{KR.1968} and is extensively described in~\cite{Arbib.1968} and~\cite[Chapter 4]{RS.2009}.
Any explicit decomposition of a semigroup into a wreath product of groups and aperiodic semigroups gives an upper bound to complexity. A nice 
example of an upper bound for complexity is the Depth Decomposition Theorem \cite{TilsonXII}. It states that the complexity of a 
semigroup is bounded by its depth, which by definition is the longest chain of $\mathcal{J}$-classes that contain non-trivial groups. 
The proof gives an explicit decomposition of a semigroup that achieves this bound. 

The full transformation monoid $T_k$ on $k$-elements has depth and complexity equal to $k-1$. It is also known that the monoid 
$M_{k}(F)$ of all $k \times k$ matrices over a finite field $F$ has depth equal to its complexity. On the other hand, as we will recall below, 
the complexity of the symmetric inverse semigroup $\operatorname{SIS}(k)$ is $0$ if $k \leqslant 1$ and $1$ if $k>1$. However its depth is $k-1$.

Another important upper bound is defined by morphism chains. A morphism between semigroups $S$ and $T$  is {\em aperiodic} if it is 
one-to-one on subgroups of $S$. The morphism is called $\mathcal{L}'$ if it separates regular $\mathcal{L}$-classes of $S$. 

One of the most important theorems of finite semigroup theorem, the Fundamental Lemma of Complexity, says that if $f \colon S \rightarrow T$ 
is an aperiodic surjective morphism, then $Sc = Tc$. It is also known that if $f \colon S \rightarrow T$ is a surjective $\mathcal{L}'$ morphism 
then $Sc \leqslant 1+Tc$. 

It follows from these results  about aperiodic and $\mathcal{L}'$ morphisms that if we define $S\theta$ to be the least number of $\mathcal{L}'$ 
morphisms in any factorization of  the morphism $S \rightarrow 1$ from $S$ to the trivial semigroup 
into a composition of aperiodic and $\mathcal{L}'$ morphisms, then $\theta$
is an upper bound for complexity. That is, for any semigroup $S$, we have $Sc \leqslant S\theta$. 
It is proved in~\cite[Chapter 9]{Arbib.1968} that if $S$ is a completely regular semigroup, then $Sc = S\theta$. In particular, this proves that 
complexity is decidable for completely regular semigroups. On the other hand, it is not difficult to show that $\operatorname{SIS}(k)\theta = k-1$, so that 
$\theta$ is a proper upper bound. 

Recall that if $(P,T)$ and $(Q,S)$ are transformation semigroups, then the wreath product $(P,T) \wr (Q,S)$ is the transformation semigroup 
$(P \times Q,W)$, where $W$ is the direct product $F(Q,T) \times S$ with $F(Q,T)$ the set of functions from $Q$ to $T$. If 
$(f,s) \in W$ and $(p,q) \in P \times Q$, then we define $(p,q)(f,s)=(p(qf),qs)$ if both coordinates are defined and undefined otherwise. 
For background on wreath products, see~\cite{Eilenberg.1976, Arbib.1968, RS.2009}.

One can check that if $A$ is an aperiodic semigroup and $S$ is an arbitrary semigroup, then the projection $\pi_{A} \colon A \wr S \rightarrow S$ 
is an aperiodic morphism. If $G$ is a group, then the projection $\pi_{G} \colon G \wr S \rightarrow S$ is a $\mathcal{L}'$ morphism.  
It follows easily that if we define $S\widehat{\theta}=\operatorname{min}\{T\theta \mid S$ is a quotient of $T\}$, then $S\widehat{\theta}= Sc$. 
Like the formal definition of complexity, this result requires searching through the infinite collection of all semigroups mapping onto $S$ and 
is thus not a priori computable. 

Early lower bounds~\cite{lowerbounds1, lowerbounds2} were based on the length of certain subsemigroup chains. A subsemigroup $T$ of $S$ 
that is generated by a chain of $\mathcal{L}$-classes of $S$ is called an absolute type I subsemigroup of $S$. The type II subsemigroup, $S_{II}$, 
is the smallest subsemigroup of $S$ containing the idempotents and is closed under weak conjugation: if $sts=s$ in $S$, then $sS_{II}t \cup tS_{II}s$ 
is contained in $S_{II}$. If one defines $Sl$ to be the length of the longest chain of alternating absolute type I and type II subsemigroups that contains 
a non-aperiodic type I subsemigroup whose type II subsemigroup is non-aperiodic, then it is proved in \cite{lowerbounds2} that $Sl \leqslant Sc$.

Note that $\mathcal{L}'$ morphisms lead to an important functor on the category of finite semigroups. Let $S$ be a semigroup. Right multiplication 
of an $\mathcal{L}$-class by an element of a semigroup is either an $\mathcal{L}$-class in the same $\mathcal{J}$-class or is an 
$\mathcal{L}$-class in a strictly smaller $\mathcal{J}$-class in the $\mathcal{J}$-order. Then it can be proved that $S$ acts by partial 
functions on its collection $\operatorname{Reg}(L)$ of its regular $\mathcal{L}$-classes if we define $l\cdot s =ls$ if $ls$ belongs to the 
same $\mathcal{J}$-class and is undefined otherwise. The image $S^{\mathcal{L}'}$ is the minimal $\mathcal{L}'$ image of $S$ and is the 
object part of the aforementioned functor. This is part of the semi-local theory of finite semigroups. See \cite{RS.2009} for details.

If $S$ is any inverse semigroup, it is not difficult to see that $S_{II}$ is the semilattice of idempotents of $S$. Therefore, $Sl \leqslant 1$. Since 
it is known that $Sc \leqslant 1$ if $S$ is inverse we have $Sc = Sl$ in this case. It is proved in~\cite[Chapter 9]{Arbib.1968} that if $S$ is a 
completely regular semigroup, then $Sc =Sl$. Thus for the two classically studied classes of semigroups, complexity was known to be decidable 
at the beginning of the theory in the 1960s.

The two lower bounds above compute complexity for semigroups that divide the wreath product of a group and an aperiodic semigroup in 
either order. More precisely, a semigroup $S$ divides a wreath product $A \wr G$, where $A$ is aperiodic and $G$ is a group if and only if 
$S_{II}$ is aperiodic \cite{lowerbounds2}. A semigroup $S$ divides a wreath product $G \wr A$, where $G$ is a group and $A$ is aperiodic 
if and only if $S^{\mathcal{L}'}$ is an aperiodic semigroup \cite{Rhodes31}. Thus for these two classes of semigroups, complexity is decidable.

Another important class of semigroups in which $Sl=Sc$ is the class of semigroups with 2 non-zero $\mathcal{J}$-classes. It is not hard to 
reduce this assertion to the case of a small monoid, that is a monoid consisting of a group of units and a 0-simple ideal. Since the depth of 
such a monoid is at most 2, the result comes down to proving that if $Sl=1$, then $Sc=1$. This is proved in~\cite{Tilson.1973}. The proof 
introduces the concept of cross-sections. This was developed further by Rhodes who called this the Presentation Lemma~\cite{AHNR.1995, RS.2009}. 
This along with the allied notion of a flow on a semigroup~\cite[Sections 2-3]{HRS.2012} is the central tool in studying complexity.

\subsection{Preliminaries, main result, and proof outline}

For in depth backgrounds on the decomposition and complexity theory of finite semigroups, see~\cite{Arbib.1968} and~\cite[Chapter 4]{RS.2009}. 
For details of all assertions in this subsection, the reader should turn to these references. We give an overview of the tools we need and an outline 
of the proof of the main theorem.

A semigroup $S$ is bi-transitive if it has a faithful transitive action by partial functions on the right of some set $X$ and the left of some set $Y$. 
In the literature~\cite{Arbib.1968, RS.2009}, bi-transitive semigroups are called generalized group mappings abbreviated by $\operatorname{GGM}$. 
We will use this terminology in this paper.

It is known that a semigroup $S$ is $\operatorname{GGM}$ if and only if it has a unique 0-simple ideal $I(S) \approx M^{0}(A,G,B,C)$ such that 
the right (left) Sch\"utzenberger representation of $S$ is faithful on some (equivalently any) $\mathcal{R}$($\mathcal{L})$-class in $I(S)$. 
A $\operatorname{GGM}$ semigroup is called {\em group mapping} denoted by $\operatorname{GM}$ if the group $G$ in $I(S)$ is non-trivial. 

By fixing an $\mathcal{R}$-class $R$ in $I(S)$ we can identify it as a set with $G \times B$. We thus have a faithful action, that is, a transformation 
semigroup $(G \times B,S)$. The set $B$ is an index for the set of $\mathcal{L}$-classes in $I(S)$. The action mentioned in the previous subsection 
on regular $\mathcal{L}$-classes restricts to an action by partial functions on the set $B$. The image of this action is called the {\em right latter mapping} 
image of $S$ and is denoted by $\operatorname{RLM}(S)$.

The importance of $\operatorname{GM}$ semigroups and their $\operatorname{RLM}$ images is summed up by the following results.

\begin{thm}
Let $S$ be a $\operatorname{GM}$ semigroup.
\begin{enumerate}
\item
$(G \times B,S)$ divides $G \wr (B,\operatorname{RLM}(S))$.
\item
$S^{\mathcal{L}'} = \operatorname{RLM}(S)$.
\item
For every semigroup $T$, there is a $\operatorname{GM}$ quotient semigroup $S$ such that $Tc=Sc$.
\end{enumerate}
\end{thm}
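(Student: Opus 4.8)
The plan is to prove the three parts largely by appealing to structural facts about $\operatorname{GM}$ semigroups and their Schützenberger representations.

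For part (1), I would set up the faithful transformation semigroup $(G \times B, S)$ coming from the fixed $\mathcal{R}$-class $R \approx G \times B$ in $I(S)$, and analyze how an element $s \in S$ acts on a pair $(g,b)$. Right multiplication by $s$ sends the $\mathcal{L}$-class indexed by $b$ either to the $\mathcal{L}$-class indexed by $b \cdot s$ (when this stays in $I(S)$, matching the partial action defining $\operatorname{RLM}(S)$) or out of $I(S)$ (in which case the action is undefined in the transformation semigroup). When it stays, the resulting element of $R$ is $(g \cdot \varphi_s(b), b\cdot s)$ for a suitable "cocycle" map $\varphi_s \colon \dom(s) \to G$ determined by the sandwich matrix $C$ of $M^0(A,G,B,C)$. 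The first coordinate thus transforms by a function from $B$ into $G$ composed with the group action, and the second coordinate by the partial function of $(B,\operatorname{RLM}(S))$ — which is exactly the recipe for an element of the wreath product $G \wr (B,\operatorname{RLM}(S))$. So I would define the map $S \to G \wr (B,\operatorname{RLM}(S))$ by $s \mapsto (\varphi_s, \bar s)$, check it is a homomorphism (this is where the sandwich-matrix bookkeeping lives), and check that the induced map on $G \times B$ intertwines the two actions; faithfulness of $(G\times B,S)$ then gives a division. I expect the main obstacle to be precisely this cocycle computation: verifying that the $\varphi_s$ compose correctly under the twisted product in $F(B,G) \times \operatorname{RLM}(S)$, i.e.\ that $\varphi_{st}(b) = \varphi_s(b)\,\varphi_t(b\cdot s)$ on the appropriate domain, which is a careful but routine manipulation of Rees matrix coordinates and Green's relations.

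For part (2), the point is that $S^{\mathcal{L}'}$, by definition the minimal $\mathcal{L}'$ image, is the image of the action of $S$ on its regular $\mathcal{L}$-classes $\operatorname{Reg}(L)$ by the partial action $l \cdot s = ls$ (defined when $ls$ is $\mathcal{J}$-equivalent to $l$). For a $\operatorname{GM}$ semigroup the regular $\mathcal{L}$-classes are exactly those in the unique $0$-minimal ideal $I(S)$ together with the action being faithful there, so the action on $\operatorname{Reg}(L)$ collapses to the action on $B$, whose image is by definition $\operatorname{RLM}(S)$. What needs checking is that no further collapsing or enlarging occurs — i.e.\ that the action on $B$ is already the minimal $\mathcal{L}'$ image and that $\operatorname{RLM}(S)$ is itself $\mathcal{L}'$ — which follows from the $\operatorname{GM}$ hypothesis forcing faithfulness of the Schützenberger representation on an $\mathcal{L}$-class of $I(S)$. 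I would cite the semi-local theory of \cite{RS.2009} for the identification of $S^{\mathcal{L}'}$ with the action on regular $\mathcal{L}$-classes and then just match definitions.

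For part (3), given an arbitrary finite semigroup $T$, I would invoke the standard construction (from \cite{Arbib.1968, RS.2009}) producing a $\operatorname{GM}$ semigroup mapping onto $T$ with the same complexity: one passes to a suitable Rees quotient / Schützenberger-type construction realizing $T$ as acting faithfully and transitively by partial functions on one side, which up to the $\operatorname{GM}$-versus-$\operatorname{GGM}$ distinction (non-triviality of the structure group) yields $S$ with $Sc = Tc$. The complexity equality uses that the connecting morphisms in this construction are aperiodic or built from $\mathcal{L}'$ pieces controlled by the Fundamental Lemma of Complexity and the $\mathcal{L}'$-morphism bound recalled in the introduction; if the structure group is trivial one replaces $T$ by a complexity-preserving modification so the $\operatorname{GM}$ (not merely $\operatorname{GGM}$) condition holds. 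Here the main subtlety is ensuring the produced semigroup is genuinely $\operatorname{GM}$ and not just $\operatorname{GGM}$ while keeping complexity unchanged, but this is handled by the well-known reduction and I would simply reference it rather than reprove it.
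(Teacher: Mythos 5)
The paper itself offers no proof of this theorem: it is stated as background, with all details deferred to \cite{Arbib.1968} and \cite[Chapter 4]{RS.2009}. Measured against the standard arguments in those references, your part (1) is essentially correct: in Rees coordinates on the fixed $\mathcal{R}$-class $R \cong G \times B$, right multiplication by $s$ is $(g,b) \mapsto (g\,\varphi_s(b),\, b\cdot s)$, and the cocycle identity $\varphi_{st}(b) = \varphi_s(b)\varphi_t(b\cdot s)$ is exactly the multiplication rule in $F(B,G)\times \operatorname{RLM}(S)$ under the paper's convention $(p,q)(f,s)=(p(qf),qs)$; faithfulness of $(G\times B,S)$ then upgrades the covering to a division.

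Part (2) rests on a false premise. It is not true that the regular $\mathcal{L}$-classes of a $\operatorname{GM}$ semigroup are exactly those in $I(S)$: a $\operatorname{GM}$ monoid has its group of units as a regular $\mathcal{J}$-class, and in general there are regular $\mathcal{J}$-classes strictly above the distinguished $0$-minimal ideal (e.g.\ all the higher-rank classes of $M_n(F)$). Since $S^{\mathcal{L}'}$ is by definition the image of the action on \emph{all} regular $\mathcal{L}$-classes, while $\operatorname{RLM}(S)$ only records the action on $B$, the entire content of part (2) is that two elements acting identically on $B$ must act identically on the regular $\mathcal{L}$-classes of every other regular $\mathcal{J}$-class. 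That is precisely where faithfulness of the Schützenberger representation on $I(S)$ must be exploited; you correctly flag ``no further collapsing occurs'' as the thing to check, but the reason you offer (that there is nothing outside $I(S)$ to act on) is wrong, so the key step is missing.

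Part (3) is stated in the wrong direction: you propose a $\operatorname{GM}$ semigroup \emph{mapping onto} $T$, whereas the claim is that $T$ has a $\operatorname{GM}$ \emph{quotient} $S$ with $Sc = Tc$. Since any quotient automatically satisfies $Sc \leqslant Tc$, the substance is that some $\operatorname{GM}$ quotient attains equality. The standard mechanism, absent from your sketch, is that the intersection over all regular $\mathcal{J}$-classes $J$ of the $\operatorname{GGM}$ congruences is an aperiodic congruence, so the Fundamental Lemma of Complexity gives $Tc = \max_J \bigl(\operatorname{GGM}_J(T)\bigr)c$, after which the $\operatorname{GGM}$-but-not-$\operatorname{GM}$ factors (trivial structure group) are removed by a further complexity-preserving reduction. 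Appealing to ``aperiodic or $\mathcal{L}'$ pieces'' only yields inequalities such as $Sc \leqslant 1 + Tc$, which cannot by itself produce a quotient achieving the maximum.
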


It follows that the question of decidability of complexity can be reduced to the case that $S$ is a $\operatorname{GM}$ semigroup and 
whether $Sc=\operatorname{RLM}(S)c$ or $Sc = 1+\operatorname{RLM}(S)c$. The Presentation Lemma~\cite{AHNR.1995} and the related 
notion of Flows on Automata~\cite{HRS.2012} give necessary and sufficient, though not necessarily computable conditions for 
$Sc=\operatorname{RLM}(S)c$. We highlight the associated decomposition theorem. Details about flows will be given in Section~\ref{Section.flow}.

\begin{thm}[The Presentation Lemma Flow Decomposition Theorem]
\label{PLDT}
Let $(G \times B,S)$ be a $\operatorname{GM}$ transformation semigroup and let $k > 0$. Then $Sc \leqslant k$ if and only if 
$\operatorname{RLM}(S)c \leqslant k$ and there is a transformation semigroup $T$ with $Tc \leqslant k-1$ such that 
\[
	(G \times B,S) \prec (G \wr \mathsf{Sym}(B) \wr T) \times (B,\operatorname{RLM}(S)).
\]
\end{thm}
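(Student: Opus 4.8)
The plan is to prove the two directions separately: sufficiency is a short complexity computation, while necessity is essentially a restatement of the Presentation Lemma and will rely on the flow machinery developed in Section~\ref{Section.flow}.

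For the ``if'' direction, assume $\operatorname{RLM}(S)c \leqslant k$ and that a transformation semigroup $T$ with $Tc \leqslant k-1$ and the stated division exist. Since the wreath product of transformation semigroups is associative, I would read $G \wr \mathsf{Sym}(B) \wr T$ as $\bigl(G \wr \mathsf{Sym}(B)\bigr) \wr T$; the factor $G \wr \mathsf{Sym}(B)$ is a wreath product of two groups, hence a group, so it lies in $C_1$. The standard wreath-product bound $C_1 \wr C_{k-1} \subseteq C_k$ then gives $(G \wr \mathsf{Sym}(B) \wr T)c \leqslant k$. Because $\operatorname{RLM}(S)c \leqslant k$, the factor $(B,\operatorname{RLM}(S))$ also lies in $C_k$, and since the complexity of a direct product is the maximum of the complexities of its factors, the right-hand side lies in $C_k$. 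As complexity is monotone under division, $Sc \leqslant k$.

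For the ``only if'' direction, assume $Sc \leqslant k$. The semigroup $\operatorname{RLM}(S) = S^{\mathcal{L}'}$ is a quotient of $S$ by part~(2) of the structure theorem for $\operatorname{GM}$ semigroups, and complexity does not increase under quotients, so $\operatorname{RLM}(S)c \leqslant Sc \leqslant k$; this is the first half of the right-hand side. To produce $T$, I would invoke the flow theory of Section~\ref{Section.flow}: for a $\operatorname{GM}$ semigroup $S$ with $\operatorname{RLM}(S)c \leqslant k$, the inequality $Sc \leqslant k$ is equivalent to the existence of a flow on the associated automaton whose values lie in a transformation semigroup $T$ with $Tc \leqslant k-1$. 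Given such a flow, one coordinatizes the faithful action $(G \times B, S)$: the motion of the $\mathcal{L}$-class coordinate is governed by $(B,\operatorname{RLM}(S))$, the permutation of $\mathcal{L}$-classes induced by an element acting inside the $0$-minimal ideal is recorded by $\mathsf{Sym}(B)$, the Sch\"utzenberger group action lives in $G$, and the flow values, sitting at the bottom of the wreath chain in $T$, supply precisely the glue needed to reassemble these pieces; taking the direct product with the factor $(B,\operatorname{RLM}(S))$ retains enough of the $B$-action for the resulting embedding to be faithful, which yields the asserted division.

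The main obstacle is concentrated in the necessity direction, namely the implication ``$Sc \leqslant k$ implies there is a flow with values in some $T$ of complexity $\leqslant k-1$'': this is the content of the Presentation Lemma, and it is where cross sections and flows on automata (\cite{AHNR.1995, HRS.2012}) do the real work. A secondary point requiring care is the exact bookkeeping that turns a flow into a division onto the specific object $G \wr \mathsf{Sym}(B) \wr T$ rather than a larger wreath product: the $\mathsf{Sym}(B)$ factor must absorb the action of $S$ on the index set of $\mathcal{L}$-classes of the $0$-minimal ideal, and one must check this costs no additional complexity, which holds because $\mathsf{Sym}(B)$ is a group and is grouped together with $G$ in the complexity estimate above.
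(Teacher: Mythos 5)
Your proposal is correct and matches how the paper treats this statement: the paper gives no independent proof of Theorem~\ref{PLDT}, presenting it as the Presentation Lemma in its flow formulation and citing \cite{AHNR.1995}, \cite[Section 4.14]{RS.2009} and \cite[Section 3]{HRS.2012} for exactly the hard (necessity) direction you defer to those same sources. Your sufficiency computation --- $G \wr \mathsf{Sym}(B)$ is a group, so $\bigl(G \wr \mathsf{Sym}(B)\bigr) \wr T \in C_1 \wr C_{k-1} \subseteq C_k$, direct products and division preserve $C_k$ --- is the standard argument and is correct.
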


Note that given a $\operatorname{GM}$ semigroup $S$ it is easy to compute $G$, $\mathsf{Sym}(B)$ and $\operatorname{RLM}(S)$. Therefore, 
computing complexity comes down to deciding if the semigroup $T$ in the statement of Theorem~\ref{PLDT} is computable. 
The main theorem of~\cite{MRS.2023} shows this can be done in the case that $Sc = 1$. In this case, we have to find an aperiodic semigroup 
$T$ in Theorem~\ref{PLDT}.

\begin{thm}[Margolis, Rhodes, Schilling 2023]
If $Sc \leqslant 1$, then the aperiodic semigroup $T$ in Theorem~\ref{PLDT} has order bounded by a computable function of the cardinality 
of $S$.
\end{thm}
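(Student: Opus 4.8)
The plan is to prove the non-trivial forward implication: assuming $Sc\leqslant 1$, exhibit an aperiodic $T$ satisfying the division of Theorem~\ref{PLDT} whose order is bounded by a computable function of $|S|$. The converse is automatic from Theorem~\ref{PLDT} together with the fact that division of finite transformation semigroups is decidable, so that once such a bound $N(|S|)$ is in hand one decides $Sc\leqslant 1$ by checking that $\RLM(S)$ is aperiodic and then searching the finitely many aperiodic $T$ with $|T|\leqslant N(|S|)$. So assume $Sc\leqslant 1$. By Theorem~\ref{PLDT} with $k=1$, the semigroup $\RLM(S)$ is aperiodic (an easily checkable condition) and there is \emph{some} aperiodic transformation semigroup $T$ — a priori of unbounded size — with $(G\times B,S)\prec (G\wr\mathsf{Sym}(B)\wr T)\times(B,\RLM(S))$. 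The whole content is to replace this $T$ by a bounded one.

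First I would pass from this division to the flow formulation of Section~\ref{Section.flow}. The Presentation Lemma machinery attaches to $S$ a finite automaton $\mathcal{A}_S$ of size bounded by an explicit function of $|S|$, and shows that the displayed division holds for \emph{some} aperiodic $T$ if and only if there is a flow on $\mathcal{A}_S$ with values in an aperiodic semigroup satisfying a finite system of flow equations that is read off from a presentation of $S$ and therefore involves only products of length bounded by a polynomial in $|S|$; equivalently, the Henckell--Rhodes--Steinberg lower bound of $S$ is $\leqslant 1$. Moreover a minimal valid $T$ is the transition semigroup generated by such a flow. By the standard profinite reduction, a flow into some aperiodic semigroup exists if and only if one exists into the free profinite aperiodic semigroup $\ov{\Omega}_X\mathbf{A}$ on the finite edge alphabet $X$ of $\mathcal{A}_S$, so it suffices to analyze flows into $\ov{\Omega}_X\mathbf{A}$ and the closed, finitely generated, a priori infinite subsemigroups they generate.

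The heart of the argument is to show that only a bounded-depth piece of this profinite semigroup matters and that this piece is finite and computable. Here I would invoke the structural and algorithmic theory of free Burnside semigroups: McCammond's solution of the word problem for the relatively free semigroup $B(a,b)$ satisfying $x^a=x^{a+b}$ for $a$ above a fixed bound, the accompanying geometric description controlling the Green relations via van Kampen and small-cancellation arguments, and the realization of $\ov{\Omega}_X\mathbf{A}$ as an inverse limit of such $B(a,a!)$. Because both the flow equations and the division of Theorem~\ref{PLDT} only test products of length at most a function of $|S|$, a valid flow into $\ov{\Omega}_X\mathbf{A}$ descends, after a compactness step, to a valid flow into a single $B(a,b)$ with $a,b$ bounded in terms of $|S|$, and in fact into the quotient of that $B(a,b)$ by the ideal of elements of $\mathcal{J}$-depth above the relevant bound. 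McCammond's results make this quotient a \emph{finite} aperiodic semigroup whose cardinality is bounded by a computable function of $|S|$; its transition semigroup is then a valid $T$ of the required size, yielding $N(|S|)$.

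I expect the main obstacle to be precisely this last step: proving that enlarging $T$ past the bounded-depth quotient is never necessary — that the flow genuinely factors through a bounded-depth, hence finite, portion of the free Burnside (equivalently free profinite aperiodic) semigroup — and that the passage to that portion is effective. This forces one to extract from McCammond's analysis the quantitative statement that, up to any prescribed $\mathcal{J}$-depth, $B(a,b)$ has only finitely many and computably listable elements, and to reconcile the discrete Burnside picture with the profinite one (uniform continuity of the flow, and compactness to descend from a profinite solution to a finite one). The remaining ingredients — the boundedness of $\mathcal{A}_S$, the boundedness of the word lengths occurring in the flow equations, and the decidability of division — are routine bookkeeping.
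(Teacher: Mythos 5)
A preliminary remark: the present paper contains no proof of this statement --- it is imported verbatim from \cite{MRS.2023} --- so you can only be measured against the strategy that the paper's abstract and introduction attribute to that work, namely that the lower bound of \cite{HRS.2012} is exact for complexity $1$, proved ``using profinite methods and results on free Burnside semigroups by McCammond.'' You have correctly named those ingredients, and the reduction to flows via the Presentation Lemma is the right first move.

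Nevertheless there is a genuine gap, and it sits exactly at the step you flag as the main obstacle; the mechanism you propose for it does not work as stated. First, the flow conditions are not ``a finite system of equations testing only products of length bounded by a polynomial in $|S|$'': the closure-operator machinery of Section 2 is built around the operator $f \mapsto f^{\omega+\star}$, i.e.\ stabilized idempotent powers, so the conditions intrinsically involve unboundedly long products unless one already knows the target semigroup has bounded order --- which is precisely what is to be proved. Second, your compactness step descends from a flow into $\ov{\Omega}_X\mathbf{A}$ to a flow into \emph{some} finite aperiodic quotient, but with no control whatsoever on its size; and the proposed repair --- passing to ``the quotient of $B(a,b)$ by the ideal of elements of $\mathcal{J}$-depth above the relevant bound'' --- requires that a bounded $\mathcal{J}$-depth slice of a free Burnside semigroup be finite and effectively listable, which is not something you can assert without extracting a new quantitative statement from McCammond's structure theory; it is not a consequence of the word-problem solution alone. (Minor but symptomatic: the free pro-aperiodic semigroup relates to the Burnside semigroups $x^{a}=x^{a+1}$, not $x^{a}=x^{a+a!}$, whose quotients contain nontrivial groups.) The architecture of \cite{MRS.2023}, as reflected in Theorem 3.3 and Section 2.4 of the present paper, runs in the opposite direction: the candidate $T$ is exhibited explicitly up front as the transition semigroup of the evaluation transformation semigroup $\operatorname{Eval}_{0}(Rh_{B}(G))$, a subsemigroup of the monoid of closure operators on $Rh_{B}(G)\times Rh_{B}(G)$ and hence of order at most $2^{|Rh_{B}(G)|^{2}}$, a computable function of $|S|$ on its face; the profinite and Burnside machinery is then used only to prove that when $Sc\leqslant 1$ this \emph{a priori bounded} object actually carries a complete flow. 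Starting instead from an arbitrary aperiodic witness and trying to compress it places the quantitative burden exactly where compactness cannot discharge it.
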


\begin{thm}[Margolis, Rhodes, Schilling 2023]
\label{compdec}
It is decidable whether a finite semigroup has complexity at most 1.
\end{thm}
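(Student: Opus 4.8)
The plan is to combine four ingredients already in hand: the reduction of complexity questions to $\operatorname{GM}$ transformation semigroups, the Presentation Lemma Flow Decomposition Theorem (Theorem~\ref{PLDT}) specialised to $k=1$, the size bound of the preceding theorem, and the decidability of the division preorder $\prec$ on finite transformation semigroups. Given a finite semigroup $S$, I would first pass to its $\operatorname{GM}$ images, which are finitely many and effectively computable from the multiplication table of $S$; by the structure theory recalled above it then suffices to decide $S'c\leqslant 1$ for each such image $S'$, so we may assume that $S=(G\times B,S)$ is itself a $\operatorname{GM}$ transformation semigroup.

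Now apply Theorem~\ref{PLDT} with $k=1$: $Sc\leqslant 1$ if and only if $\RLM(S)c\leqslant 1$ and there is an \emph{aperiodic} transformation semigroup $T$ (one with $Tc\leqslant 0$) such that $(G\times B,S)\prec (G\wr\mathsf{Sym}(B)\wr T)\times (B,\RLM(S))$. The first conjunct I would settle by recursion: $\RLM(S)$ is effectively computable from $S$, and the iterated passage to $\RLM$-images is exactly what underlies the finite $\mathcal{L}'$-morphism chains discussed in the introduction, so this recursion has bounded depth, with aperiodic semigroups — where complexity $0$ is decided by checking that all subgroups are trivial — as its base case.

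It remains to decide the second conjunct, and this is where the preceding theorem is used: if $Sc\leqslant 1$ then the required aperiodic $T$ can be chosen of order at most $f(|S|)$ for an explicitly computable function $f$. I would therefore enumerate all transformation semigroups $(Z,T)$ with $|Z|,|T|\leqslant f(|S|)$, discard those containing a nontrivial subgroup, and for each remaining $T$ form the explicit finite transformation semigroup $(G\wr\mathsf{Sym}(B)\wr T)\times(B,\RLM(S))$ and test whether $S$ divides it. Division of finite transformation semigroups is decidable — it reduces to a finite search for a subsemigroup admitting a surjection onto the target — so this is an effective finite procedure. If some $T$ passes the test and, recursively, $\RLM(S)c\leqslant 1$, declare $Sc\leqslant 1$; otherwise, by the equivalence in Theorem~\ref{PLDT}, declare $Sc\geqslant 2$. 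Together with the $\operatorname{GM}$-reduction this is a terminating decision procedure.

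The only genuinely hard step is the size bound itself (the preceding theorem): the rest is the routine bookkeeping of finite semigroup theory — correctness of the $\operatorname{GM}$-reduction, termination of the $\RLM$-recursion, and decidability of $\prec$. That bound in turn rests on profinite techniques, the structure of free Burnside semigroups (McCammond and others), and the Henckell--Rhodes--Steinberg lower bound of 2012; morally, one must prove that the Henckell--Rhodes--Steinberg lower bound is \emph{exact} at complexity $1$, so that a failure to find a small aperiodic $T$ is not an artefact of an insufficient search but genuine evidence that $Sc\geqslant 2$. Establishing the analogous exactness — with a strengthened lower bound — for every $k$ is what the present paper takes up, after which the same assembly decides $Sc\leqslant k$ for all $k$.
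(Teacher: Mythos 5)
Your proposal is correct and follows exactly the derivation the paper intends: Theorem~\ref{compdec} is quoted from \cite{MRS.2023} and is placed immediately after the size-bound theorem precisely because decidability follows from that bound via Theorem~\ref{PLDT} with $k=1$, a finite search over bounded aperiodic transformation semigroups, decidability of division, and induction on cardinality for the $\RLM$ recursion. You also correctly locate the genuine mathematical content in the size bound itself (equivalently, the exactness of the flow/lower-bound criterion at complexity $1$, restated later as Theorem~\ref{mainc=1}), which this paper does not reprove but imports from \cite{MRS.2023}.
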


We will give a more precise version of Theorem \ref{compdec} later in the paper. We now state the main result of this paper. 

\begin{thm}\label{dec=k}
It is decidable whether a finite semigroup has complexity at most $k$ for all $k \geqslant 0$.
\end{thm}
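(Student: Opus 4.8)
The plan is to reduce, via Theorem~\ref{PLDT} and the reduction to $\operatorname{GM}$ semigroups, the question ``$Sc \leqslant k$?'' to the existence of a transformation semigroup $T$ with $Tc \leqslant k-1$ witnessing the division
\[
	(G \times B,S) \prec (G \wr \mathsf{Sym}(B) \wr T) \times (B,\operatorname{RLM}(S)),
\]
and then to prove that if such a $T$ exists at all, then one may take $T$ of size bounded by a computable function of $|S|$. Granting such a bound, the decision procedure is immediate: by induction on $k$ (the base case $k=0$ is testing aperiodicity, and $k=1$ is Theorem~\ref{compdec}), given $S$ we first pass to a $\operatorname{GM}$ quotient, compute $G$, $\mathsf{Sym}(B)$ and $\operatorname{RLM}(S)$, recursively decide whether $\operatorname{RLM}(S)c \leqslant k$, and then search over the finitely many transformation semigroups $T$ up to the computable size bound, checking for each (recursively, since it is a strictly smaller complexity question) whether $Tc \leqslant k-1$ and whether the displayed division holds; division of finite transformation semigroups is decidable. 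So the entire content is the effective size bound on $T$.

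To obtain that bound I would follow the template of the $k=1$ case from \cite{MRS.2023}: reformulate the existence of $T$ in Theorem~\ref{PLDT} as the existence of a \emph{flow} on the automaton attached to $S$ (in the sense of \cite{HRS.2012} and Section~\ref{Section.flow}) with an added complexity constraint on the semigroup generated by the flow functions, then attack this via profinite methods. The key step is to replace the ``exact'' but non-effective lower bound of Henckell--Rhodes--Steinberg \cite{HRS.2012} by the improved lower bound announced in the abstract, and to prove that this improved lower bound is \emph{exactly} complexity for all $k$. Concretely: if no bounded $T$ works, one produces, by a compactness/K\"onig's lemma argument in the free profinite semigroup, a sequence of candidate flows of unbounded ``size'' whose limit is a profinite flow; using the structure of free Burnside semigroups (McCammond et al.), as in \cite{MRS.2023}, one extracts from this profinite flow an \emph{actual finite} flow of bounded size with complexity $\leqslant k-1$, contradicting the assumption. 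Equivalently, one shows the improved lower bound, which is manifestly computable, coincides with $Sc$, so that it certifies when $Sc > k$ and hence bounds the search.

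The main obstacle is the last point: extending the profinite ``blow-up to a finite witness'' argument from complexity $1$ to arbitrary $k$. For $k=1$ the relevant semigroups generated by flow functions must be aperiodic, and aperiodicity is exactly captured by the $\omega$-power identity $x^\omega = x^{\omega+1}$, which is what makes the free Burnside / McCammond machinery directly applicable. For general $k$ the constraint ``$Tc \leqslant k-1$'' is a recursively defined, non-equational property, so one cannot simply invoke a single pseudoidentity; instead one must set up the induction so that the class $C_{k-1}$ of semigroups of complexity $\leqslant k-1$ is, by inductive hypothesis, decidable with an effective bound, and then show that the profinite limit of flow functions lands — after the McCammond-style finitization — in $C_{k-1}$ with controlled size. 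Making the bookkeeping of the profinite argument compatible with this nested induction, and verifying that the improved lower bound of the present paper is preserved under the relevant operations (wreath products with $G \wr \mathsf{Sym}(B)$, division, passing to $\operatorname{RLM}$), is where the real work lies; everything else is assembling Theorems~\ref{PLDT}, \ref{compdec} and the new lower-bound equality into the induction sketched above.
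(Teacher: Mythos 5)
Your high-level framing is right (reduce to $\operatorname{GM}$ semigroups, use Theorem~\ref{PLDT}, induct on $k$ with Theorem~\ref{compdec} as the base case), but the core of your plan --- obtaining a computable size bound on the witness $T$ by rerunning the profinite/free-Burnside finitization of \cite{MRS.2023} at level $k$ --- is exactly the step you concede you cannot carry out, and it is not how the argument actually goes. As you yourself observe, the McCammond machinery is tied to the pseudoidentity $x^{\omega}=x^{\omega+1}$ defining aperiodicity; the class $C_{k-1}$ has no such equational description, and "make the profinite limit land in $C_{k-1}$ with controlled size" is a restatement of the problem, not a proof. There is no known direct extension of that compactness-and-finitization argument beyond $k=1$, so your proposal has a genuine gap at its central step.

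The paper closes this gap with two ideas your proposal is missing. First, the checkable certificate is not a brute-force search over bounded $T$ but a single computable finite object: the evaluation transformation semigroup $\operatorname{Eval}_{k-1}(Rh_{B}(G))$, built from the modified flow monoid $F_{k-1}(L)$ in which the $\omega+\star$ operator is applied only to elements of the maximal $C_{k-1}$-pointlike subgroups $N_{J}^{(k-1)}$ (Theorem~\ref{maxpl}); membership in $N_{J}^{(k-1)}$ is decidable by the induction on $k$ (Proposition~\ref{njkdecidable}), which is what makes the improved lower bound of Theorem~\ref{lb} effective. Second, exactness of that lower bound is proved \emph{without} a new profinite argument: one constructs a relational morphism $\varphi$ from (the $\operatorname{GST}$ expansion of) $\operatorname{Eval}_{k-1}(L)$ onto a semigroup $W_{k-1}$ of complexity at most $k-1$ whose fibers over idempotents meet each $G_{J}$ exactly in $N_{J}^{(k-1)}$, and then shows that the derived transformation semigroup $D(\varphi)$ has complexity at most $1$ (Lemma~\ref{lemma.main}) by invoking the already-established complexity-$1$ criterion $\mathcal{J}$-class by $\mathcal{J}$-class. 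The Derived Semigroup Theorem~\ref{DST} then yields $Sc\leqslant 1+(k-1)=k$. In other words, the profinite/Burnside input is used only as a black box at level $1$, once per induction step, applied to $D(\varphi)$ rather than to $S$; your proposal instead tries to redo it at level $k$, which is the part that fails.
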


The proof of decidability of complexity $k$ for all $k \geqslant 0$ follows by induction on $k$. The case $k \leqslant 1$ is given by 
Theorem~\ref{compdec}. Given $k > 1$ and a $\operatorname{GM}$ transformation semigroup $(G \times B,S)$ we effectively find a relational 
morphism $\rho$ onto a semigroup $T$ with $Tc = Sc-1$. We then use Theorem \ref{compdec} to prove that the derived transformation semigroup 
$D(\rho)$ has complexity at most 1. The result now follows from the Derived Semigroup Theorem~\cite{RS.2009}. More details about the proof 
and the proof itself are given in the next sections.

\subsection*{Acknowledgements}
We wish to thank Jean-Camille Birget, Sam van Gool, Pedro Silva, Ben Steinberg and Howard Straubing for helpful comments and discussions.

SM thanks Bar-Ilan University for generous retirement benefits. 

JR thanks the University of California, Berkeley for generous retirement benefits.

AS was partially supported by NSF grant DMS--2053350.

\section{Background}

In this section, we review and extend results from~\cite{AmigoDowling} and~\cite{HRS.2012} which we will need for the proof.

\subsection{The connection between Rhodes lattices and set-partition lattices} 
\label{subsection.SPRhodes}

Let $S$ be a $\operatorname{GM}$ semigroup with $0$-minimal ideal $M^{0}(A,G,B,C)$. We define two lattices of interest and give the 
connections between them. The first is the set-partition lattice $\SP(G \times B)$. This is the lattice whose elements are all pairs $(Y,\Pi)$,
where $Y$ is a subset of $G\times B$ and $\Pi$ is a partition on $Y$. Here $(Y,\Pi) \leqslant (Z,\Theta)$ if $Y \subseteq Z$ and 
the $\Pi$-class of $y$ is contained in the $\Theta$-class of $y$ for all $y\in Y$.

The second lattice is the Rhodes lattice $Rh_{B}(G)$. We review the basics. For more details see \cite{AmigoDowling}. Let $G$ be a finite group 
and $B$ a finite set. A partial partition on $B$ is a partition $\Pi$ on a subset $I$ of $B$. We also consider the collection of all functions
$F(B,G)$, $f \colon I \rightarrow G$ from subsets $I$ of $B$ to $G$. The group $G$ acts on the left of $F(B,G)$ by $(gf)(b) = gf(b)$ for 
$f \in F(B,G), g \in G, b \in \operatorname{Dom}(f)$. An element $Gf = \{gf \mid f \colon I \rightarrow G, I \subseteq X, g \in G\}$ of the quotient set $F(B,G)/G$ is 
called a cross-section with domain $I$. An SPC (Subset, Partition, Cross-section) over $G$ is a triple $(I,\Pi,f)$, where $I$ is a subset of $B$, $\Pi$ is a partition 
of $I$ and $f$ is a collection of cross-sections one for each $\Pi$-class $\pi_{i}$ with domain $\pi_{i}$. 

If the classes of $\Pi$ are $\{\pi_{1},\pi_{2}, \ldots, \pi_{k}\}$, 
then we sometimes write $\{(\pi_{1},f_{1}), \ldots, (\pi_{k},f_{k})\}$, where $f_{i}$ is the cross-section associated to $\pi_{i}$. For brevity we denote this 
set of cross-sections by $[f]_{\Pi}$.  We let $Rh_{B}(G)$ denote the set of all SPCs on $B$ over the group $G$ union a new element 
$\Longrightarrow\Longleftarrow$  that we call {\em contradiction} and is the top element of the lattice structure on $Rh_{B}(G)$.  Contradiction 
occurs  because the join of two SPCs need not exist. In this case we say that the contradiction is their join.

The partial order on $Rh_{B}(G)$ is defined as follows.

We have $(I,\Pi,[f]_{\Pi}) \leqslant (J,\tau,[h]_{\tau})$ if:
\begin{enumerate}
\item
$I \subseteq J$;
\item
Every block of $\Pi$ is contained in a (necessarily unique) block of $\tau$;
\item
If the $\Pi$-class $\pi_{i}$ is a subset of the $\tau$-class $\tau_{j}$, then the restriction of $h$ to $\pi_{i}$ equals $f$ restricted to $\pi_{i}$ as 
elements of $F(B,G)/G$. That is, $[h|_{\pi_{i}}] = [f|_{\pi_{i}}]$.
\end{enumerate}

See \cite[Section 3]{AmigoDowling} for the definition of the lattice structure on $Rh_{B}(G)$. The same set is also the set underlying the Dowling lattices, 
which has a different partial order. For the connection between Rhodes lattices and Dowling lattices see~\cite{AmigoDowling}.

We note that $\SP(G \times B)$ is isomorphic to the Rhodes lattice $Rh_{G\times B}(1)$ of the trivial group over the set $G \times B$. We need only 
note that a cross-section to the trivial group is a partial constant function to the identity and can be omitted, leaving us with a set-partition pair. There 
are no contradictions for Rhodes lattices over the trivial group, and in this case the top element is the pair $(G \times B,(G \times B)^{2})$. Despite 
this, we prefer to use the notation $\SP(G \times B)$ instead of $Rh_{G\times B}(1)$.

Conversely, we can find a copy of the meet semilattice of $Rh_{B}(G)$ as a meet subsemilattice of $\SP(G\times B)$. We begin with the following 
important definition.

\begin{definition}
A subset $X$ of $G \times B$ is a cross-section if whenever $(g,b),(h,b) \in X$ then g = h. That is, $X$ defines a cross-section of the projection 
$\theta \colon X \rightarrow B$. Equivalently, $X^{\rho} \subseteq B \times G$, the reverse of $X$, is the graph of a partial function 
$f_{X} \colon B \rightarrow G$.  An  element $(Y,\Pi) \in \SP(G \times B)$ is a cross-section if  every partition class $\pi$ of $\Pi$ is a cross-section.  
\end{definition}

An element $(Y,\Pi)$ that is not a cross-section is called a {\em contradiction}. That is, $(Y,\Pi)$ is a contradiction if some $\Pi$-class $\pi$ contains 
two elements $(g,b),(h,b)$ with $g \neq h$. We note that the set of cross-sections is a meet subsemilattice of $\SP(G \times B)$ and the set of 
contradictions is a join subsemilattice of $\SP(G \times B)$.

We will identify $B$ with the subset $\{(1,b) \mid b \in B\}$ of $G \times B$. Then  $G\times B$ is the free left $G$-act on 
$B$ under the action $g(h,b)=(gh,b)$. This action extends to subsets and partitions of $G\times B$ . Thus $\SP(G\times B)$ is a left $G$-act. 
An element $(Y,\Pi)$ is {\em invariant} if $G(Y,\Pi) = (Y,\Pi)$. It is easy to see that $(Y,\Pi)$ is invariant if and only if:
\begin{enumerate}
\item
$Y=G \times B'$ for some subset $B'$ of $B$.
\item
For each $\Pi$-class $\pi$, $G\pi \subseteq \Pi$ and is a partition of $G\times B''$ where $B''\subseteq B'$.
\end{enumerate}

Thus $(Y,\Pi)$ is invariant if and only if $Y=G\times B'$ for some subset $B'$ of $B$ and there is a partition $B_{1},\ldots, B_{n}$ of $B'$ such 
that for all $\Pi$-classes $\pi$, $G\pi$ is a partition of $G\times B_{i}$ for a unique $1 \leqslant i \leqslant n$. 

Let $\CS(G\times B)$ be the set of invariant cross-sections $(Y,\Pi)$ in $\SP(G \times B)$. Then $\CS(G\times B)$ is a meet subsemilattice of 
$\SP(G\times B)$. We claim it is isomorphic to the meet semilattice of $Rh_{B}(G)$. Indeed, let $(G\times B',\Pi)$ be an invariant cross-section. 
Using the notation above, pick  $\Pi$-classes, $\pi_{1}, \ldots , \pi_{n}$ such that $G\pi_{i}$ is a partition of $G\times B_{i}$ for $i=1, \ldots, n$. 
Since $\pi_{i}$ is a cross-section its reverse is the graph of a partial function $f_{i} \colon B_{i}\rightarrow G$. We map 
$(G\times B',\Pi)$ to $(B',\{B_{1},\ldots , B_{n}\},[f]_{\{B_{1},\ldots , B_{n}\}}) \in Rh_{B}(G)$, where the component of $f$ on $B_{i}$ is $f_i$. 
It is clear that this does not depend on the representatives $\pi$ and we have a well-defined function $F \colon \CS(G \times B) \rightarrow Rh_{B}(G)$. 
It is easy to see that $F$ is a morphism of meet semilattices.

Conversely, let $(B',\Theta,[f]) \in Rh_{B}(G)$. We map $(B',\Theta,[f])$ to $(G \times B',\Pi) \in \CS(G \times B)$, where $\Pi$ is defined as 
follows. If $\theta$ is a $\Theta$-class, let $\widehat{\theta} =\{(bf,b) \mid b \in \theta\}$. Let $\Pi$ be the collection of all subsets of the 
form $g\widehat{\theta}$, where $\theta$ is a $\Theta$-class and $g \in G$. The proof that $(G \times B',\Pi) \in \CS(G \times B)$ and 
that this assignment is the inverse to $F$ above and gives an isomorphism between the meet semilattice of $Rh_{B}(G)$ and $\CS(G \times B)$ 
is straightforward and left to the reader. Furthermore, if the join of two SPC in $Rh_{B}(G)$ is an SPC (that is, it is not the contradiction), then this 
assignment preserves joins. Therefore we can identify the lattice $Rh_{B}(G)$ with $\CS(G \times B)$ with a new element $\Longrightarrow\Longleftarrow$ 
added when we define the join of two elements of $\CS(G \times B)$ to be $\Longrightarrow\Longleftarrow$ if their join in $\SP(G\times B)$ is a 
contradiction. We use $\CS(G\times B)$ for this lattice as well.

We record this discussion in the following proposition.

\begin{prop}\label{Updown}
The set-partition lattice $\SP(G\times B)$ is isomorphic to the Rhodes lattice $Rh_{G\times B}(1)$. The Rhodes lattice $Rh_{B}(G)$ is isomorphic 
to the lattice $\CS(G\times B)$.
\end{prop}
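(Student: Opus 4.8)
The plan is to treat the two statements separately, since the discussion preceding the proposition has already built both isomorphisms explicitly; what remains is to record that the constructions are well-defined, order-preserving, bijective, and compatible with joins.

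For the first isomorphism $\SP(G\times B)\cong Rh_{G\times B}(1)$, I would observe that an SPC over the trivial group on a subset $Y$ of $G\times B$ is a triple $(Y,\Pi,[f]_{\Pi})$ in which each cross-section is a partial function from a $\Pi$-class into the one-element group, hence the unique constant map to the identity; thus the cross-section data carries no information and the underlying set of $Rh_{G\times B}(1)$ is exactly the set of pairs $(Y,\Pi)$. Under this identification the three clauses defining $\leqslant$ on $Rh_{G\times B}(1)$ become: $Y\subseteq Z$, every $\Pi$-block lies in a $\Theta$-block, and clause (3), which is vacuous since all cross-sections into the trivial group are equal. This is precisely the order on $\SP(G\times B)$. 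Finally the join of two partial constant maps into the trivial group is again such a map, so no contradiction ever arises, matching the fact that $\SP(G\times B)$ has a genuine top element $(G\times B,(G\times B)^{2})$. Hence the identity on underlying sets is a lattice isomorphism.

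For the second isomorphism $Rh_{B}(G)\cong\CS(G\times B)$, I would check that the two mutually inverse maps built in the text are lattice isomorphisms. Going one way, given an invariant cross-section $(G\times B',\Pi)$, invariance yields a partition $B'=B_{1}\sqcup\cdots\sqcup B_{n}$ with $G\pi$ a partition of $G\times B_{i}$ for each $\Pi$-class $\pi$ lying over $B_{i}$; picking a representative $\pi_{i}$ over each $B_{i}$, the cross-section condition makes $\pi_{i}^{\rho}$ the graph of a partial function $f_{i}\colon B_{i}\to G$, and a different representative in the same $G$-orbit replaces $f_{i}$ by $gf_{i}$, leaving the class $[f_{i}]\in F(B,G)/G$ fixed; so $F(G\times B',\Pi)=(B',\{B_{1},\dots,B_{n}\},[f])$ is well-defined. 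Going the other way, one forms $\widehat{\theta}=\{(bf,b)\mid b\in\theta\}$ for each $\Theta$-class $\theta$ and lets $\Pi$ consist of all $G$-translates $g\widehat{\theta}$; checking that $\Pi$ partitions $G\times B'$ into cross-sections, that it is invariant, and that the two assignments are mutually inverse is a direct computation. Order-preservation then matches clause by clause: $B'\subseteq B''$, refinement of block partitions, and agreement $[h|_{\pi_{i}}]=[f|_{\pi_{i}}]$ on common blocks correspond under $F$ to $Y\subseteq Z$ together with refinement of partitions inside $\CS(G\times B)\subseteq\SP(G\times B)$.

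The main technical point, and the step I expect to require genuine care, is compatibility with joins. Since $\CS(G\times B)$ was already shown to be a meet subsemilattice of $\SP(G\times B)$, the order-isomorphism $F$ is automatically a meet-semilattice isomorphism onto the meet semilattice of $Rh_{B}(G)$. For joins one must verify that when the join of two invariant cross-sections in $\SP(G\times B)$ is itself a cross-section, it is again invariant and its image under $F$ is the SPC-join in $Rh_{B}(G)$; and that when that join in $\SP(G\times B)$ is a contradiction, the corresponding SPCs have no join in $Rh_{B}(G)$. Granting this, extending $F$ by sending $\Longrightarrow\Longleftarrow$ to $\Longrightarrow\Longleftarrow$ produces the desired lattice isomorphism $Rh_{B}(G)\cong\CS(G\times B)$. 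Tracking how the $G$-orbits of blocks amalgamate — and exactly when two shifted graphs $g_{1}\widehat{\theta}$ and $g_{2}\widehat{\theta'}$ are forced to overlap in a way that violates the cross-section condition — is the only part that is more than bookkeeping with the left $G$-action.
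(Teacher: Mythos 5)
Your proposal is correct and follows essentially the same route as the paper, whose ``proof'' is precisely the discussion preceding the proposition: the trivial-group identification for $\SP(G\times B)\cong Rh_{G\times B}(1)$, and the two mutually inverse maps $F$ and $(B',\Theta,[f])\mapsto(G\times B',\Pi)$ for $Rh_{B}(G)\cong\CS(G\times B)$, with the verification left to the reader. You correctly isolate the join/contradiction compatibility as the one step needing real care, which is exactly the part the paper defers.
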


\subsection{The monoid of closure operators on a lattice }

We make extensive use of tools from \cite{HRS.2012}. In this subsection we recall the definition of the monoid of closure operators on 
the direct product of a lattice with itself and some unary operators on this monoid.

Let $L$ be a lattice and let $L^{2} = L \times L$. Let $\Phi$ be a closure operator on $L^2$. Recall this means that $\Phi$ is an order preserving, 
extensive (that is, for all $l \in L, l \leqslant l\Phi$), idempotent function on $L^2$. A {\em stable pair} for $\Phi$ is a closed element of $\Phi$. Thus 
a stable pair $(l,l') \in L^2$ is an element such that $(l,l')\Phi = (l,l')$.  The stable pairs of $\Phi$ are a meet closed subset of $L \times L$. Conversely, 
each meet closed subset of $L \times L$ is the set of stable pairs for a unique closure operator on $L \times L$. So for us, $\Phi$ is a binary relation 
on $L$. The Boolean matrix associated to $\Phi$ is of dimension $|L| \times |L|$. It has a 1 in position $(l_{1},l_{2})$ if $(l_{1},l_{2})$ is a stable pair and 
a 0 otherwise.

With this identification, the collection $\mathcal{C}(L^{2})$ of all closure operators on $L^2$ is a submonoid of the monoid $B(L)$ of binary relations on 
$L$~\cite[Proposition 2.5]{HRS.2012}. As is well known, $B(L)$ is isomorphic to the monoid $ M_{n}(\mathcal{B})$ of $n \times n$ matrices over the 
2-element Boolean algebra $\mathcal{B}$, where $n=|L|$. We thus also consider $\mathcal{C}(L^{2})$ to be a monoid of $n \times n$ Boolean matrices.
Let $L$ be either $Rh_{B}(G)$ or $\SP(G\times B)$.

We use all the definitions of operators on the monoid  $\mathcal{C}(L^{2})$ of closure operators on  lattice $L^{2} = L \times L$, where $L$ is a 
lattice.  

We now recall the definition of some important unary operations on $\mathcal{C}(L^{2})$. Let $f \in \mathcal{C}(L^{2})$. Define 
$\operatorname{Dom}(f)=\{l \mid \exists l', (l,l') \in f\}$. The operator $\overleftarrow{f}=\{(l,l)\mid l \in \operatorname{Dom}(f)\}$ is called the 
back-flow of $f$ and is also a member of $\mathcal{C}(L^{2})$. The Kleene star of $f$, denoted $f^{\star}$, is the intersection of $f$ with the 
diagonal $\{(l,l) \mid l \in L\}$. Note that $f^{\star}$ is also in $\mathcal{C}(L^{2})$. The operator $f^{\omega+{\star}}$ is defined to be $f^{\omega}f^{*}$, 
where $f^{\omega}$ is the unique idempotent in the subsemigroup generated by $f$. Thus $(l,l') \in f^{\omega + \star}$ if and only if $(l,l') \in f^{\omega}$ 
and $(l',l') \in f$. See~\cite[Section 2]{HRS.2012} for motivation for the names of these operators and their basic properties. 

\subsection{Flows and the Presentation Lemma Flow Decomposition Theorem} 
\label{Section.flow}

Let $X$ be a generating set for our fixed $\operatorname{GM}$ semigroup $S$. We review the definition of flows from  an automaton with alphabet 
$X$  to the set-partition $\SP(G\times B)$ and to the Rhodes lattice $Rh_{B}(G)$. For more details, see \cite[Sections 2-3]{HRS.2012}.

Let $(G \times B,S)$ be $\operatorname{GM}$ and $X$ a generating set for $S$. By deterministic automaton we mean an automaton such 
that each letter defines a partial function on the state set.

\begin{definition}
Let $\mathcal{A}$ be a deterministic automaton with state set $Q$ and alphabet $X$. 
A {\em flow} to the lattice $\SP(G\times B)$ on $\mathcal{A}$ is a function $f \colon Q \rightarrow \SP(G \times B)$ such that for each $q \in Q, x \in X$, 
with $qf =(Y,\Pi)$ and $(qx)f = (Z,\Theta)$ we have:
\begin{enumerate}
\item
$Yx \subseteq Z$.
\item
Multiplication by $x$ considered as an element of $S$ induces a partial 1-1 map from $Y/\Pi$ to $Z/\Theta$.

That is, for all $y,y' \in Y$ we have $y,y'$ are in a $\Pi$-class if and only if $yx,y'x$ are in a $\Theta$-class whenever $yx,y'x$ are both defined.
\item
{\bf The Cross Section Condition:} For all $q \in Q$, $qf$ is a cross-section.
\end{enumerate}
\end{definition}

We use Proposition \ref{Updown} to give a definition of a flow to the Rhodes lattice $Rh_{B}(G)$.

\begin{definition}
Let $\mathcal{A}$ be a deterministic automaton with state set $Q$. A {\em flow} to the lattice $Rh_{B}(G)$ is a flow to $\SP(G\times B)$ such 
that for each state $q$, $qf \in \CS(G\times B)$. That is, $qf$ is a $G$-invariant cross-section.  
\end{definition}

\begin{remark}
It follows from the original statement of the Presentation Lemma \cite{AHNR.1995} and from \cite{Redux} that if $S$ has a flow with respect to some 
automaton over $\SP(G\times B)$, then it has a flow from the same automaton over $Rh_{B}(G)$. This is because the minimal injective congruence as 
discussed in \cite{Redux} is $G$-invariant. The proof of the equivalence of the Presentation Lemma and Flows
in~\cite[Section 3]{HRS.2012} works as well for the Presentation Lemma in the sense of~\cite{AHNR.1995}. We will always assume that flows are 
over the Rhodes lattice $Rh_{B}(G)$.    
\end{remark}

 If $\mathcal{A}$ is an automaton with alphabet $X$ and state set $Q$, recall that its completion is the automaton $\mathcal{A}^{\square}$ that 
 adds a sink state $\square$ to $Q$ and declares that $qx = \square$ if $qx$ is not defined in $\mathcal{A}$. A flow on $\mathcal{A}$ is a complete 
 flow if on $\mathcal{A}^{\square}$ extending $f$ by letting $\square f = (\emptyset, \emptyset)$ the bottom of both the set-partition and Rhodes 
 lattices remains a flow and furthermore, for all $(g,b) \in G \times B$, there is a $q\in Q$ such that $(g,b) \in Y$, where $qf=(Y,\Pi)$. All flows in 
 this paper will be complete flows.

 Flows are related to the Presentation Lemma \cite{AHNR.1995}, \cite[Section 4.14]{RS.2009}. They give a necessary and sufficient condition for 
 $Sc =\operatorname{RLM}(S)c$, where $S$ is a $\operatorname{GM}$ semigroup.

\begin{thm}[The Presentation Lemma-Flow Version]
\label{PLflow}
Let $(G \times B,S)$ be a $\operatorname{GM}$ transformation semigroup with $S$ generated by $X$. Let $k> 0$ and assume that 
$\operatorname{RLM}(S)c = k$.  Then $Sc = k$ if and only if:
\begin{itemize}
	\item There is an $X$ automaton $\mathcal{A}$ whose transformation semigroup $T$ has complexity strictly less than $k$ and a complete flow
	$f \colon Q \rightarrow Rh_{B}(G)$.
	\item In this case, $(G \times B,S)$ divides $(G \wr \mathsf{Sym}(B) \wr T) \times (B,\operatorname{RLM}(S))$.
\end{itemize}
\end{thm}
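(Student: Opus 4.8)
The plan is to prove Theorem~\ref{PLflow} by reducing it to the original Presentation Lemma of~\cite{AHNR.1995} together with the flow/presentation equivalence of~\cite[Section~3]{HRS.2012}, and then unpacking what the existence of a low-complexity presentation gives us as a wreath product decomposition. The forward direction ($Sc=k \Rightarrow$ existence of a flow) is essentially the content of the Presentation Lemma: if $Sc = k = \operatorname{RLM}(S)c$, then the ``extra'' complexity that could have been forced by passing from $\operatorname{RLM}(S)$ to $S$ is not forced, and this non-forcing is witnessed by a cross-section (a choice of coordinates on the $\mathcal{R}$-classes of the $0$-minimal ideal) that is compatible with the $S$-action in a controlled way. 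I would invoke the equivalence from~\cite[Section~3]{HRS.2012} verbatim to translate the presentation into a flow $f\colon Q \to \SP(G\times B)$ on some $X$-automaton $\mathcal{A}$ whose transformation semigroup $T$ has $Tc < k$, and then use the Remark following the definition of flows (the minimal injective congruence is $G$-invariant, by~\cite{Redux}) to upgrade $f$ to a flow over $Rh_B(G)$. Completeness of the flow is arranged by first completing $\mathcal{A}$ to $\mathcal{A}^{\square}$ and checking that every $(g,b)\in G\times B$ appears in the support of some $qf$; if not, one enlarges $\mathcal{A}$ by adjoining states that read $S$ starting from a generating configuration, which does not increase the complexity of $T$ since adding such states only adds aperiodic behaviour on top.

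For the converse direction, I would assume we are given the automaton $\mathcal{A}$ with $Tc < k$ and the complete flow $f\colon Q \to Rh_B(G)$, and show $(G\times B,S)$ divides $(G \wr \mathsf{Sym}(B) \wr T) \times (B,\operatorname{RLM}(S))$. This is the ``decomposition'' half and is where the real work lies. The idea is standard for the Presentation Lemma but needs to be carried out at the level of transformation semigroups: a point $(g,b) \in G\times B$ is coded by the pair consisting of (i) a state $q \in Q$ with $(g,b)$ in the support $Y$ of $qf = (Y,\Pi,[h]_{\Pi})$, together with the $\Pi$-class of $(g,b)$ and the $G$-coset data recording $g$ relative to the cross-section $h$ on that class, and (ii) the image $b \in B$ under the projection to $\operatorname{RLM}(S)$. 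The flow conditions are exactly what make this coding covariant: condition~(1), $Yx \subseteq Z$, ensures the support is respected; condition~(2), that $x$ induces a partial $1$-$1$ map $Y/\Pi \to Z/\Theta$, ensures the $\mathsf{Sym}(B)$ (permutation-of-blocks) coordinate is well-defined; and the cross-section condition lets us read off a single group element in $G$ for the $G$-coordinate. Multiplication by $x \in X$ then acts: on the $Q$-coordinate by $q \mapsto qx$ (the $T$-action), on the block-permutation coordinate by the induced bijection (the $\mathsf{Sym}(B)$-action), on the $G$-coordinate by left translation twisted by the cross-section discrepancy (the $G$-action), and on the $\operatorname{RLM}(S)$-coordinate by the canonical action $b \mapsto b\cdot x$. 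Checking that this is a genuine homomorphism from $S$ (extending from $X$) and that the resulting map from the coding set onto $G\times B$ is a surjective morphism of transformation semigroups is the routine but necessary calculation; the wreath product order $G \wr \mathsf{Sym}(B) \wr T$ is forced by which coordinate ``sees'' which.

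The main obstacle, and the step I would spend the most care on, is verifying that the $G$-coordinate transforms correctly under composition of letters --- i.e.\ that the cocycle assembling the cross-section discrepancies along a path in $\mathcal{A}$ is consistent, so that the action of a word $w \in X^{+}$ depends only on its image in $S$ and not on the factorization. This is precisely the point where one uses that $f$ is a flow \emph{over} $Rh_B(G)$ rather than merely over $\SP(G\times B)$: the $G$-invariance of each $qf$ means the cross-section data is really a well-defined element of $F(B,G)/G$ per block (an SPC in the sense of Section~\ref{subsection.SPRhodes}), and the partial order on $Rh_B(G)$ --- condition~(3), $[h|_{\pi_i}] = [f|_{\pi_i}]$ --- guarantees that the coset-valued discrepancies glue. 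A secondary subtlety is the passage through the sink state $\square$ and partially defined products: one must check that whenever $(g,b)x$ is undefined in $(G\times B,S)$, the coordinates also become jointly undefined in the wreath product, which is where completeness of the flow (every $(g,b)$ in some support, and $\square f$ the bottom element) is used. Once the division is established, the complexity bound $Sc \le k$ is immediate: $(G,G)$ contributes $1$, $\mathsf{Sym}(B)$ and $\operatorname{RLM}(S)$ contribute at most $k$ (the latter since $\operatorname{RLM}(S)c = k$ and $\mathsf{Sym}(B)$ combined with an aperiodic can be absorbed), and $T$ contributes at most $k-1$, and the standard complexity arithmetic for wreath products of $G\wr(\text{aperiodic})$-towers together with the Fundamental Lemma gives $Sc \le k$; combined with $Sc \ge \operatorname{RLM}(S)c = k$ this yields $Sc = k$.
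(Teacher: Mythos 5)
This theorem is stated in the paper as background and is not proved there: it is imported from the Presentation Lemma of Allen--Henckell--Nehaniv--Rhodes and the flow/presentation equivalence of Henckell--Rhodes--Steinberg (Section 3) and Rhodes--Steinberg (Section 4.14). Your sketch is a correct reconstruction of exactly that argument --- Presentation Lemma plus $G$-invariance for the forward direction, the block/cross-section coding into $G \wr \mathsf{Sym}(B) \wr T$ with the cocycle-consistency check for the converse, and the standard complexity arithmetic --- so it follows essentially the same route the paper relies on.
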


We emphasize the decomposition in Theorem \ref{PLflow} in the following Corollary.

\begin{cor}[{The Presentation Lemma Flow Decomposition Theorem}]
\label{PLdecomp}
 Let $(G \times B,S)$ be $\operatorname{GM}$ and let $k > 0$.  Then $Sc \leqslant k$ if and only if $\operatorname{RLM}(S)c \leqslant k$ and 
 there is a transformation semigroup $T$ with $Tc \leqslant k-1$ such that 
$$ (G \times B,S) \prec (G \wr \mathsf{Sym}(B) \wr T) \times (B,\operatorname{RLM}(S)).$$
\end{cor}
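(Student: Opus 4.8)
The plan is to derive Corollary~\ref{PLdecomp} directly from Theorem~\ref{PLflow}, which is the substantive statement; the Corollary merely repackages the decomposition conclusion and removes the hypothesis that $\operatorname{RLM}(S)c$ equals $k$ exactly. First I would dispose of the trivial direction. If $(G \times B,S) \prec (G \wr \mathsf{Sym}(B) \wr T) \times (B,\operatorname{RLM}(S))$ with $Tc \leqslant k-1$ and $\operatorname{RLM}(S)c \leqslant k$, then since complexity is monotone under division and behaves subadditively under wreath product (and a group contributes exactly one level, an aperiodic contributes zero), we get
\[
	Sc \leqslant (G \wr \mathsf{Sym}(B) \wr T)c \leqslant 1 + Tc \leqslant k,
\]
using that $G \wr \mathsf{Sym}(B)$ is a group (complexity at most $1$, in fact exactly $1$ unless trivial) wreathed with $T$, together with the direct product bound $\max$ against $\operatorname{RLM}(S)c \leqslant k$. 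This gives $Sc \leqslant k$ without any case analysis.

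For the forward direction, assume $Sc \leqslant k$. Since $\operatorname{RLM}(S)$ is a quotient of $S$ (via the $\mathcal{L}'$ morphism $S \to S^{\mathcal{L}'} = \operatorname{RLM}(S)$ recorded earlier), complexity is monotone under quotients, so $\operatorname{RLM}(S)c \leqslant Sc \leqslant k$, giving the first asserted conclusion for free. It remains to produce the transformation semigroup $T$ with $Tc \leqslant k-1$ and the claimed division. Let $j = \operatorname{RLM}(S)c \leqslant k$. If $j = 0$, i.e.\ $\operatorname{RLM}(S)$ is aperiodic, then $S$ divides $G \wr \operatorname{RLM}(S)$ by Part~(1) of the first displayed theorem, and one may take $T$ to be trivial (or absorb $\operatorname{RLM}(S)$ into the aperiodic part), so $Sc \leqslant 1$ and the decomposition holds with $Tc = 0 \leqslant k-1$. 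If $j > 0$, apply Theorem~\ref{PLflow} with $k$ replaced by $j$: since $Sc \leqslant k$ and $Sc \geqslant \operatorname{RLM}(S)c = j$ (again by quotient-monotonicity), we have $j \leqslant Sc \leqslant k$; the Fundamental Lemma / Presentation Lemma machinery says that $Sc \in \{j, j+1\}$, and in either case $Sc \leqslant k$ forces the flow criterion to be available at level $k$: concretely, if $Sc = j$ apply the ``only if'' of Theorem~\ref{PLflow} at level $j$ to get an automaton $\mathcal{A}$ with transformation semigroup $T_0$ of complexity $< j \leqslant k$, hence $T_0 c \leqslant k-1$; if $Sc = j+1 \leqslant k$, then $j \leqslant k-1$ and we may take $T$ to be (a transformation semigroup whose complexity realizes) $\operatorname{RLM}(S)$ itself wreathed appropriately, using that $Sc \leqslant 1 + \operatorname{RLM}(S)c$ via the $\mathcal{L}'$ morphism and the standard wreath decomposition behind Part~(1). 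In both sub-cases Theorem~\ref{PLflow} (or Part~(1) of the first theorem) delivers exactly the division $(G \times B,S) \prec (G \wr \mathsf{Sym}(B) \wr T) \times (B,\operatorname{RLM}(S))$.

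The step I expect to be the main obstacle — or at least the one requiring the most care — is the bookkeeping between the exact value $\operatorname{RLM}(S)c$ and the parameter $k$: Theorem~\ref{PLflow} is stated under the hypothesis $\operatorname{RLM}(S)c = k$, whereas the Corollary only assumes $\operatorname{RLM}(S)c \leqslant k$, so one must handle the slack $k - \operatorname{RLM}(S)c$ by padding the aperiodic (or the $T$) factor. The clean way is to observe that if $Tc \leqslant k-1$ works for some smaller target then it works for $k$ as well, and that a transformation semigroup of any complexity $\leqslant k-1$ can be substituted for $T$ without breaking the division (one can always replace $T$ by $T \times T'$ or wreath in extra aperiodic structure). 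I would therefore phrase the argument as: reduce to $\operatorname{RLM}(S)c = k$ by noting that lowering $k$ only strengthens the hypothesis $Sc \leqslant k$ and the conclusion $Tc \leqslant k-1$ is inherited upward, then quote Theorem~\ref{PLflow} verbatim. The remaining verifications — monotonicity of $c$ under division and quotient, $c(A \wr S) \leqslant 1 + c(S)$ for $A$ aperiodic, $c(G \wr S) \leqslant 1 + c(S)$, and $c(S \times S') = \max(c(S), c(S'))$ — are all classical and cited in the Introduction.
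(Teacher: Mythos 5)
Your proposal is correct and follows the route the paper intends: Corollary~\ref{PLdecomp} is stated there without proof as an immediate repackaging of Theorem~\ref{PLflow}, and your argument supplies exactly the routine bookkeeping (the easy direction via $c(G\wr \mathsf{Sym}(B)\wr T)\leqslant 1+Tc$ and $c$ of a direct product being the max, plus the reduction of the hypothesis $\operatorname{RLM}(S)c\leqslant k$ to the exact-value hypothesis of Theorem~\ref{PLflow} using $\operatorname{RLM}(S)c\leqslant Sc\leqslant \operatorname{RLM}(S)c+1$ and Theorem~1.1(1) in the slack cases). No gaps.
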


We call the semigroup $\Res = (G \wr \mathsf{Sym}(B) \wr T)$ in Corollary \ref{PLdecomp} {\em the resolution semigroup} of the flow. 
The minimal ideal $K(\Res)$ resolves the $0$ of $S$ in a sense akin to algebraic geometry. 

For example, if $S$ is a $\operatorname{GM}$ inverse semigroup, we can take $T$ to be the trivial semigroup. Then $\operatorname{RLM}(S)$ 
is the fundamental image of $S$ and the decomposition $S$ divides $(G \wr \mathsf{Sym}(B)) \times \operatorname{RLM}(S)$ follows from a
theorem of McAlister and Reilly~\cite{McAlReilly} from 1977.  In this case, the graph of the relational morphism from $S$ to $G \wr \mathsf{Sym}(B)$ 
is an $E$-unitary cover of $S$ and the group $G \wr \mathsf{Sym}(B)$ resolves the problem of the maximal group image of $S$ being the trivial group, 
since we assume $\operatorname{GM}$ semigroups have a zero. 

The Presentation Lemma was first used in 1973 \cite{Tilson.1973}. It was used from the early 1970s by Rhodes, but only first appeared in 
print in~\cite{AHNR.1995}.

We give an equivalent version of flows that will be important in our revised version of the evaluation transformation semigroup. The discussion here 
follows~\cite[Section 4.14]{RS.2009}. We  recall the definition of the derived transformation semigroup of a relational morphism of transformation 
semigroups as defined in~\cite[Definition 4.14.2]{RS.2009} and \cite[Chapter 3]{Eilenberg.1976}. See these references for more details and for the proof 
of Theorem \ref{DST} below.

Let $\varphi \colon (P,S) \rightarrow (Q,T)$ be a relational morphism of 
transformation semigroups. For each $s \in S$ we pick an element 
$\overline{s} \in T$ that covers $s$. This is called a parameterization of 
the relation. The state set of the derived transformation semigroup 
$D(\varphi)$ is the graph $\#r \subseteq P \times Q$ of $r$. A 
transformation of $D(\varphi)$ is a triple $(q,s,q\overline{s})$. The action is 
given by $(p,q)(q,s,q\overline{s})=(ps,q\overline{s})$ if $ps$ and 
$q\overline{s}$ are defined and undefined otherwise. Informally, the
transformations of $D(\varphi)$ are the restrictions of the actions of $S$
to the sets $q\varphi^{-1}$ relative to the parameterization. The 
importance of the derived transformation semigroup is the following version
of the Derived Semigroup Theorem.

\begin{thm}\label{DST}
Let $\varphi \colon (P,S) \rightarrow (Q,T)$ be a relational morphism of transformation semigroups. Then $(P,S)$ divides $D(\varphi) \wr (Q,T)$. 
Furthermore, if $(P,S)$ divides $X \wr (Q,T)$ for some transformation semigroup $X$, then $D(\varphi)$ divides $X$.
\end{thm}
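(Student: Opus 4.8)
\emph{Proof plan.} The statement has two halves, and the plan is to read them as the \emph{existence} of $D(\varphi)$ as a bottom factor over the fixed top $(Q,T)$, namely $(P,S)\prec D(\varphi)\wr(Q,T)$, and its \emph{minimality}, namely $D(\varphi)\prec X$ for every $X$ with $(P,S)\prec X\wr(Q,T)$. Both are proved by explicit construction, and in each the computational engine is the twisted multiplication of the wreath product: in the paper's convention $(f,s)(f',s')=(f'',ss')$ with $f''(q)=f(q)\,f'(qs)$. Throughout I write $D(\varphi)=(\#r,D)$, where $\#r\subseteq P\times Q$ is the graph of the state relation $r$ of $\varphi$ and $D$ is the semigroup of partial transformations of $\#r$ generated by the triples $(q,s,q\overline{s})$ acting by $(p,q)(q,s,q\overline{s})=(ps,q\overline{s})$; I write $X=(Z,V)$ for a general transformation semigroup with state set $Z$ and semigroup $V$.

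First I would establish $(P,S)\prec D(\varphi)\wr(Q,T)$, whose state set is $\#r\times Q$ and whose semigroup is $F(Q,D)\times T$. On semigroups I define $s\mapsto(h_s,\overline{s})$, where $h_s\colon Q\to D$ is $q\mapsto(q,s,q\overline{s})$, and let $\psi$ be the subsemigroup of $S\times(F(Q,D)\times T)$ generated by these pairs; since $F(Q,D)\times T$ is a semigroup this is automatically a relational morphism, and the twisted product gives that the bottom of $(h_s,\overline{s})(h_{s'},\overline{s'})$ is $q\mapsto(q,ss',q\overline{s}\,\overline{s'})$, an identity I record for use in the converse. On states I restrict to the diagonal $\Delta=\{((p,q),q):(p,q)\in\#r\}$ and set $\pi\colon\Delta\to P$, $((p,q),q)\mapsto p$; surjectivity is the totality of $r$ on $P$. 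The one computation is the simulation condition: for $\sigma=((p,q),q)$ one gets $\sigma\cdot(h_s,\overline{s})=((ps,q\overline{s}),q\overline{s})\in\Delta$ whenever $ps$ is defined, using the relational-morphism axiom to guarantee that $q\overline{s}$ is defined and $(ps,q\overline{s})\in\#r$, whence $\pi(\sigma\cdot(h_s,\overline{s}))=ps=\pi(\sigma)\cdot s$. This exhibits the division.

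For the converse I would run the construction backwards. A division $(P,S)\prec X\wr(Q,T)$ supplies a surjective partial function $\pi\colon Z\times Q\to P$ and a relational morphism $\psi\colon S\to F(Q,V)\times T$ obeying the same simulation condition; projecting $\psi$ to $T$ yields a relational morphism $(P,S)\to(Q,T)$ whose induced state relation is exactly $p\,r\,q\iff(\exists z\in Z)\,\pi(z,q)=p$, and which the statement permits me to identify with $\varphi$. Using the common parameterization I then select $(f_s,\overline{s})\in\psi$ for each $s$ and define the covering of transformations by $(q,s,q\overline{s})\mapsto f_s(q)\in V$. The recorded identity does the bookkeeping: the bottom of $(f_s,\overline{s})(f_{s'},\overline{s'})$ at $q$ is $f_s(q)\,f_{s'}(q\overline{s})$, which matches the image of the composite $(q,ss',q\overline{s}\,\overline{s'})$, so the assignment extends to a relational morphism $D\to V$. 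The covering of states is induced by $\pi$ through $(z,q)\mapsto(\pi(z,q),q)$, which lands in $\#r$ by the very definition of the induced relation and is onto $\#r$ by surjectivity of $\pi$.

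The hard part will be the minimality direction, and within it two points. The first is the reconciliation of parameterizations: $D(\varphi)$ is defined relative to a fixed choice $s\mapsto\overline{s}$, whereas an arbitrary decomposition supplies its own covers, so the argument must align them using the identification of $\psi$'s top projection with $\varphi$ (equivalently, first show $D(\varphi)$ is parameterization-independent up to division). The second, more delicate, is packaging the state covering as a genuine division of transformation semigroups: the natural map carries the $Q$-grading ($Z\times Q\to\#r$ rather than $Z\to\#r$), and the clean way to absorb this grading—and to make single-valued the assignment $(q,s,q\overline{s})\mapsto f_s(q)$ despite a partial transformation of $\#r$ admitting several triple representations—is to pass to the derived \emph{category} refinement in the sense of \cite{Eilenberg.1976, RS.2009}, where objects indexed by $Q$ handle exactly this bookkeeping. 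Both halves ultimately rest on the single twisted-product identity isolated in the existence direction, so once that is in hand the remaining work is organizational rather than computational.
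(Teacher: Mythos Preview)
The paper does not actually prove Theorem~\ref{DST}; the sentence immediately preceding it reads ``See these references for more details and for the proof of Theorem~\ref{DST} below,'' pointing to \cite[Definition~4.14.2]{RS.2009} and \cite[Chapter~3]{Eilenberg.1976}. So there is no in-paper proof to compare against.

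That said, your plan is the standard one carried out in those references. The existence direction is exactly right: the diagonal embedding $\Delta\subseteq\#r\times Q$ together with $s\mapsto(h_s,\overline s)$, $h_s(q)=(q,s,q\overline s)$, is the canonical division, and your twisted-product bookkeeping is correct. For the minimality direction you have correctly identified the two real issues. First, the theorem as literally stated quantifies over \emph{all} divisions $(P,S)\prec X\wr(Q,T)$, whereas what is actually provable (and what the cited references prove) is the version in which the division is \emph{via} $\varphi$, i.e.\ the composite of the division with the wreath projection to $(Q,T)$ agrees with $\varphi$; your remark about ``identifying'' the induced relational morphism with $\varphi$ is exactly this hypothesis, and it is necessary, not merely a convenience. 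Second, your instinct to pass to the derived category to absorb the $Q$-grading and the non-uniqueness of triple representations is also the standard resolution: the transformation-semigroup statement is really a shadow of Tilson's derived-category theorem, and the clean argument lives there. So your outline is sound and coincides with the textbook treatment; there is simply nothing in the present paper to compare it to.
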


The following is an equivalent formulation of the Presentation Lemma \cite[Theorem 4.14.19]{RS.2009}. See~\cite[Section 3]{HRS.2012} for a proof of 
equivalence. By a congruence on a transformation semigroup $(P,S)$ we mean an equivalence relation $\equiv$ on $P$ such that for all $p,q \in P$ 
and whenever both $ps$ and $qs$ are defined, for $s \in S$, then $ps \equiv qs$. The quotient transformation semigroup is the faithful image of the 
action of $S$ on $\faktor{P}{\equiv}$ defined by $[p]_{\equiv}s=[p's]_{\equiv}$ if there is a $p\equiv p'$ such that $p's$ is defined. We warn the reader 
that the semigroup of the quotient may not be a quotient of $S$. A congruence $\equiv$ on a transformation semigroup is injective if the quotient 
transformation semigroup is a semigroup of partial 1-1 maps. Equivalently, if for all $p,q \in P$ such that both $ps,qs$ are defined for $s \in S$, we 
have $ps \equiv qs$ if and only if $p\equiv q$. 

Let $(G \times B,S)$ be a $\GM$ transformation semigroup and $\varphi \colon (G \times B,S) \rightarrow (Q,T)$ a relational morphism of 
transformation semigroups. A congruence $\equiv$  on $D(\varphi)$ is admissible if whenever $((g,b),q) \equiv ((g',b'),q')$ then $q=q'$. An 
admissible congruence $\equiv$ is a cross section if whenever $((g,b),q) \equiv ((h,b),q)$, then $g = h$.

\begin{thm}
Let $(G \times B,S)$ be a $\GM$ transformation semigroup and let $(Q,T)$ be a transformation semigroup. Then there is a flow 
$F \colon Q \rightarrow Rh_{B}(G)$ if and only if there is a relational morphism of transformation semigroups 
$\varphi \colon (G \times B,S) \rightarrow (Q,T)$ such that the minimal injective congruence on $D(\varphi)$ is a cross section. In this case, 
$(G \times B,S)$ divides $(G \wr Sym(B) \wr T) \times \RLM(S)$.
\end{thm}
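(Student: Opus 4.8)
The plan is to prove both directions by translating between the combinatorial data of a flow $F\colon Q\to Rh_B(G)$ and the structural data of a relational morphism $\varphi$ together with a congruence on its derived transformation semigroup $D(\varphi)$. In both directions the automaton $\mathcal A$ underlying the flow and the transformation semigroup $(Q,T)$ will be essentially the same object: $T$ is the transformation semigroup of $\mathcal A$, and $\varphi$ sends $s\in S$ to the set of elements of $T$ realized by words in $X$ representing $s$.

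For the forward direction, suppose we are given a flow $F\colon Q\to Rh_B(G)$. We use the identification of $Rh_B(G)$ with $\CS(G\times B)$ from Proposition~\ref{Updown}, so that $qF$ is a $G$-invariant cross-section $(G\times B_q,\Pi_q)$ in $\SP(G\times B)$. The relational morphism $\varphi\colon (G\times B,S)\to (Q,T)$ is the canonical one attached to the generating set $X$ and the automaton $\mathcal A$. The state set of $D(\varphi)$ is the graph $\#r\subseteq (G\times B)\times Q$. I would define the congruence $\equiv$ on $D(\varphi)$ by declaring $((g,b),q)\equiv((g',b'),q')$ iff $q=q'$ and $(g,b),(g',b')$ lie in a common block of $\Pi_q$ (or both lie outside $G\times B_q$, i.e. are identified with the sink). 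Condition (1) of the flow definition ($Yx\subseteq Z$) together with condition (2) (the partial $1$--$1$ map on blocks induced by $x$) is exactly what is needed to check that $\equiv$ is an \emph{injective} admissible congruence: admissibility is built into the definition (we only identify pairs over the same $q$), and injectivity is the "if and only if" in flow condition (2) read as "$yx\equiv y'x \iff y\equiv y'$". Then I would argue that $\equiv$ is the \emph{minimal} injective congruence: any injective congruence must identify the images of a block under the $S$-action, and the flow is built precisely from the minimal such identification (this is where the remark after the flow definitions, citing \cite{Redux} and the minimal injective congruence, does the work — and it is $G$-invariant, which is why $qF\in\CS(G\times B)$ makes $\equiv$ a cross section: the cross-section condition on $F$ says no block of $\Pi_q$ contains $(g,b),(h,b)$ with $g\neq h$, which is literally the definition of $\equiv$ being a cross section). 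Finally the division statement $(G\times B,S)\prec (G\wr \mathsf{Sym}(B)\wr T)\times \RLM(S)$ follows by combining the Derived Semigroup Theorem (Theorem~\ref{DST}), which gives $(G\times B,S)\prec D(\varphi)\wr (Q,T)$, with an analysis of $D(\varphi)$ modulo its minimal injective cross-section congruence: the quotient acts by partial $1$--$1$ maps that are $G$-equivariant on the $G\times B_q$, hence embeds in $G\wr\mathsf{Sym}(B)$, and the leftover non-injective information is absorbed into $\RLM(S)$ — this is exactly the content of Theorem~\ref{PLflow}, which I would simply invoke once the flow is in hand. In fact, once the equivalence of "flow exists" with "$\varphi$ with minimal injective congruence a cross section" is established, the division is immediate from Theorem~\ref{PLflow}.

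For the converse, suppose $\varphi\colon (G\times B,S)\to (Q,T)$ is a relational morphism whose derived transformation semigroup $D(\varphi)$ has minimal injective congruence $\equiv$ which is a cross section. I would take $\mathcal A$ to be the automaton with state set $Q$ and alphabet $X$ (a fixed generating set of $S$), where a letter $x$ acts on $q$ via a chosen parameterization $\overline{x}$; its transformation semigroup divides $T$. Define $F\colon Q\to \SP(G\times B)$ by letting $qF$ be the set $\{(g,b)\in G\times B : ((g,b),q)\in\#r\}$ partitioned by the restriction of $\equiv$ to the fiber over $q$. That $\equiv$ is admissible guarantees this is well-defined fiberwise; that $\equiv$ is a cross section guarantees each block is a cross-section in the sense of the Definition, so the Cross Section Condition of the flow holds; $G$-invariance of the minimal injective congruence (again the remark citing \cite{Redux}) guarantees $qF\in\CS(G\times B)=Rh_B(G)$. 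Flow conditions (1) and (2) are then a direct unwinding of the fact that $\equiv$ is a congruence (gives $Yx\subseteq Z$ and "$y\equiv y'\Rightarrow yx\equiv y'x$", i.e.\ the map on blocks is well-defined) and that it is \emph{injective} (gives the reverse implication "$yx\equiv y'x\Rightarrow y\equiv y'$" whenever both are defined, i.e.\ the map on blocks is $1$--$1$). Completeness of the flow is arranged by noting $\varphi$ is fully defined on $G\times B$ so every $(g,b)$ appears in some fiber.

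The main obstacle I anticipate is the \emph{minimality} clause in both directions — verifying that the congruence one writes down from a flow really is the \emph{minimal} injective congruence, and conversely that a flow built from the minimal injective congruence is optimal in the sense needed for the complexity bound. The rest is bookkeeping, but this point is genuinely the heart of the Presentation Lemma: it is the statement that among all ways of "resolving" the $G$-ambiguity compatibly with the $S$-action, there is a canonical minimal one, and that it is $G$-invariant. I would handle it by citing the minimal-injective-congruence analysis of \cite{Redux} and the proof of equivalence of the Presentation Lemma with flows in \cite[Section~3]{HRS.2012}, rather than reproving it; indeed, the cleanest route is to observe that this theorem is a restatement of \cite[Theorem~4.14.19]{RS.2009} under the dictionary of Proposition~\ref{Updown} and the flow definitions, so the proof reduces to checking that the cross-section condition on $F$ corresponds, term by term, to the minimal injective congruence on $D(\varphi)$ being a cross section, and then quoting Theorem~\ref{PLflow} for the division.
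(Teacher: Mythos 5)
Your proposal is essentially the paper's approach: the paper gives no independent proof of this statement, recording it as a reformulation of the Presentation Lemma \cite[Theorem 4.14.19]{RS.2009} and deferring the flow/derived-semigroup translation to \cite[Section 3]{HRS.2012}, which is exactly the reduction you arrive at. One simplification worth noting: the ``main obstacle'' you single out --- verifying that the congruence built from the flow is \emph{the} minimal injective congruence --- is a non-issue, since the minimal injective congruence is contained in every injective congruence and the cross-section property is inherited by finer congruences, so it suffices that your flow-induced congruence be \emph{some} admissible injective cross-section congruence.
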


More generally, let $(G \times B,S)$ be a $\GM$ transformation semigroup and let $(Q,T)$ be a transformation semigroup and 
$\varphi \colon (G \times B,S) \rightarrow (Q,T)$. We define the subgroup $N(\varphi)$ to be the maximal subgroup of $G$ that is contained in a partition 
class of the minimal injective congruence on $D(\varphi)$. Thus $\varphi$ leads to a flow if and only if $N(\varphi)$ is the trivial subgroup. 
Note that $N(\varphi)$ is the obstruction to $\varphi$ giving a flow.

We use this concept to describe the maximal subgroup of $G$ that is pointlike with respect to the pseudovariety $C_k$ of all semigroups with complexity 
at most $k$. We begin by reviewing the definition of normal subgroup spreads as defined in~\cite[Notation 3.1a,b]{Rhodes.1977}. A normal subgroup spread 
of a semigroup $S$ is an assignment $N_e$ of a normal subgroup to each maximal subgroup $G_e$ of $S$ for each idempotent $e \in S$ such
that if $e \mathcal{J} f$, then $N_e$ is isomorphic to $N_f$ via any Rees-Green isomorphism of $G_e$ onto $G_f$. In other words, if
for two idempotents $e^2=e \mathcal{J} f^2=f$ and $xe_ay=e_b$ for some $x,y \in S^{\mathbb{1}}$, then the isomorphism from $G_e$ onto $G_f$ defined by
$g \mapsto x g y$ for $g \in G_e$  maps $N_e$ onto $N_f$. We denote by $N_J$ any $N_e$ contained in the $\mathcal{J}$-class
$J$ and we denote by $\mathcal{N} = \{N_J \mid \text{$J$ is a regular $\mathcal{J}$-class of $S$}\}$ a normal subgroup spread of $S$.

Furthermore, define
\[
	\mathsf{Reg}\mathcal{J}(S) = \{ J \mid \text{$J$ is a regular $\mathcal{J}$-class of $S$}\}.
\]
Let $J$ be a regular $\mathcal{J}$-class of a semigroup $S$. We define $\operatorname{GM}_{J}(G/N)$ to be the semigroup denoted by 
$\operatorname{GM}(J,G,N)$ in~\cite[Chapter 8]{Arbib.1968}. In brief, we first define $S/N$ to be the quotient semigroup of $S$ defined by the unique 
congruence that identifies all elements in the ideal $I(J)=\{s \in S \mid J \cap S^{1}sS^{1} =\emptyset\}$ and whose restriction to $G$ are the cosets of 
$N$ and is the identity on $S - (I(J) \cup J)$. Then $\operatorname{GM}_{J}(G/N)$ is the quotient of $S/N$ by the $\operatorname{GM}$ congruence 
associated to the 0-minimal $\mathcal{J}$-class of $S/N$. We recall the convention that if $G=N$, then $\operatorname{GM}_{J}(G,G)$ is the trivial semigroup.

If $S$ is a $\GM$ semigroup with distinguished $\mathcal{J}$-class with $I(S) \approx M^{0}(A,G,B,C)$, then there is a surjective morphism from 
$S$ to $\GM_{J}(G,N)$ that induces a surjective morphism from $I(S)$ to the distinguished ideal $M^{0}(A_{N},G_{N},B_{N},C_{N})$ of $\GM_{J}(G,N)$. 
This also gives a surjective morphism of transformation semigroups on the corresponding $\GM$ transformation semigroups. 

We note that if $\varphi \colon (G \times B,S) \rightarrow (Q,T)$ is a relational morphism, then the subgroup $N(\varphi)$ defined above is the 
smallest normal subgroup  $N$ of $G$ such that there is a flow $F \colon Q \rightarrow Rh_{G_{N}}(B_{N})$ given via the relational morphism that 
is the composition of the inverse of the morphism $(G \times B,S) \rightarrow (G_{N} \times B_{N}, \GM_{J}(G,N))$ with $\varphi$.

Let $N_1$ and $N_2$ be normal subgroups of $G_{J}$ such that each of $\GM_{J}(G_{J}/N_{i}), i=1,2$ has a flow with respect to a semigroup of 
complexity at most $k-1$. Then it is straightforward to check that $\GM_{J}(G_{J}/N_{1}\cap N_{2})$ also has such a flow. It follows that if for every 
regular $\mathcal{J}$-class $J$ of $S$, we define $N_J^{(k)}$ to be the smallest normal subgroup $N$ of $G$ such that 
$\GM_{J}(G/N)$ has a flow with respect to a  semigroup of complexity at most $k-1$ that this defines a normal subgroup spread. We call this the  
the $k$-th flow normal subgroup spread. By convention, $N_J^{(0)} = G_J$ is the maximal spread, that is, we take the whole group as the normal 
subgroup. More formally we make the following definition.

\begin{definition}
\label{flownormal.def}
    The $k$-th flow normal subgroup spread for a semigroup $S$ is the subgroup spread $N^{(k)}$ whose normal subgroup for regular $\mathcal{J}$-class 
    $J$ is $N_J^{(k)}$.
\end{definition}

Therefore, $N_{J}^{(k)}$ is the intersection of all subgroups of the form 
$N(\varphi)$, where $\varphi \colon (G \times B,S) \rightarrow (Q,T)$ for some
semigroup $T$ with $Tc \leqslant k-1$. As remarked above, $N_{J}^{(k)}$ is the smallest normal subgroup $N$ of $G_J$ such that $\GM_{J}(G,N)$ 
has a flow of complexity at most $k-1$. Thus $(G \times B, S) $ has a flow of complexity at most $k-1$ if and only if $N_{J}^{(k)}$ is the trivial subgroup. 

Although there is an infinite collection of relational morphisms of the form $\varphi \colon (G \times B,S) \rightarrow (Q,T)$ for some
semigroup $T$ with $Tc \leqslant k-1$, the following standard compactness 
argument shows that there exists a fixed relational morphism $\varphi \colon (G \times B,S) \rightarrow (Q_{k},T_{k})$ for some semigroup $T_{k}$ 
with complexity at most $k-1$ and such that $N(\varphi)=N_{J}^{(k)}$ and thus defines a flow for $\GM_{J}(G,N_{J}^{(k)})$ of complexity at most $k-1$.

\begin{thm}
Let $k>0$ and let $(G \times B,S)$ be a $\GM$ transformation semigroup. Then there exists a transformation semigroup $(Q_{k},T_{k})$ with 
$T_{k}c \leqslant k-1$ and a relational morphism $\varphi_{k}\colon (G \times B,S)\rightarrow (Q_{k},T_{k})$ such that $N(\varphi_{k})=N_{J}^{(k)}$.

\end{thm}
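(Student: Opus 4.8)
The plan is to convert two facts already recorded above into a single relational morphism $\varphi_k$ with $N(\varphi_k)=N_J^{(k)}$. Recall these facts: first, $N_J^{(k)}$ is the smallest normal subgroup $N$ of $G$ for which $\GM_J(G/N)$ admits a flow with respect to a transformation semigroup of complexity at most $k-1$; second, $N_J^{(k)}=\bigcap_\varphi N(\varphi)$, the intersection taken over all relational morphisms $\varphi\colon(G\times B,S)\to(Q,T)$ with $Tc\leqslant k-1$. The first statement is meaningful because the set $\mathcal{F}$ of normal subgroups $N$ of $G$ with $\GM_J(G/N)$ admitting such a flow is nonempty (it contains $G$, since $k\geqslant 1$ and $\GM_J(G/G)$ is trivial), is closed under intersection (the observation about $\GM_J(G/(N_1\cap N_2))$ recorded above, iterated), and is finite, as the lattice of normal subgroups of $G$ is finite; hence $N_J^{(k)}=\bigcap\mathcal{F}$ lies in $\mathcal{F}$.

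Put $N:=N_J^{(k)}$ and let $\mu\colon(G\times B,S)\to(G_N\times B_N,\GM_J(G,N))$ be the canonical surjective morphism of $\GM$ transformation semigroups recalled above. Since $N\in\mathcal{F}$, the $\GM$ transformation semigroup $(G_N\times B_N,\GM_J(G,N))$ has a flow with respect to some transformation semigroup $(Q_k,T_k)$ with $T_kc\leqslant k-1$; by the stated equivalence between flows and relational morphisms whose derived transformation semigroup has minimal injective congruence equal to a cross section---applied with $\GM_J(G,N)$ in the role of $S$---there is a relational morphism $\psi\colon(G_N\times B_N,\GM_J(G,N))\to(Q_k,T_k)$ for which the minimal injective congruence on $D(\psi)$ is a cross section. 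Define $\varphi_k:=\mu\psi\colon(G\times B,S)\to(Q_k,T_k)$, the composite of the morphism $\mu$ with the relational morphism $\psi$; this is again a relational morphism of transformation semigroups, and $T_kc\leqslant k-1$ by construction.

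It remains to show $N(\varphi_k)=N$. For $N(\varphi_k)\subseteq N$, test the normal subgroup $N$ in the definition of $N(\varphi_k)$: the relational morphism to be examined is $\mu^{-1}\varphi_k=\mu^{-1}\mu\,\psi$, and since $\mu$ is a surjective morphism of transformation semigroups---a function, and onto, on both the state sets and the semigroups---the composite relation $\mu^{-1}\mu$ is the identity relational morphism on $(G_N\times B_N,\GM_J(G,N))$, so $\mu^{-1}\varphi_k=\psi$ gives a flow to $Rh_{B_N}(G_N)$; by definition of $N(\varphi_k)$ this forces $N(\varphi_k)\subseteq N$. For the reverse inclusion, $\varphi_k$ is a relational morphism into a transformation semigroup of complexity at most $k-1$, so $N(\varphi_k)$ is among the subgroups whose intersection is $N=N_J^{(k)}$, whence $N\subseteq N(\varphi_k)$. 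Therefore $N(\varphi_k)=N=N_J^{(k)}$; in particular $\varphi_k$ defines a flow for $\GM_J(G,N_J^{(k)})$ of complexity at most $k-1$, as anticipated before the statement. (Equivalently, and more in the spirit of a compactness argument: since $G$ has only finitely many normal subgroups, pick finitely many relational morphisms $\varphi_1,\dots,\varphi_m$ into transformation semigroups of complexity at most $k-1$ with $\bigcap_i N(\varphi_i)=N_J^{(k)}$, and take for $\varphi_k$ the induced relational morphism into the direct product $(Q_1\times\dots\times Q_m,\,T_1\times\dots\times T_m)$, using that the complexity of a direct product is the maximum of the complexities of its factors and that $N(-)$ does not increase under composition with the coordinate projections.)

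The step requiring care---and the one I expect to be the main obstacle---is the claim that $\mu^{-1}\varphi_k=\psi$ really transports the flow: one must move comfortably between the two incarnations of a flow (a labeling $Q\to Rh_B(G)$ of an automaton on the one hand, and the cross-section property of the minimal injective congruence on the derived transformation semigroup on the other) and verify that $\mu^{-1}\mu$ is indeed the identity relational morphism on $(G_N\times B_N,\GM_J(G,N))$, so that passing from $\psi$ to $\mu^{-1}\mu\,\psi$ changes nothing. Everything else---finiteness of the normal-subgroup lattice, closure of $\mathcal{F}$ under intersection, existence of the canonical morphism $\mu$, and both characterizations of $N_J^{(k)}$---is already at hand from the preceding subsection, so the argument is, as advertised, a compactness and packaging step rather than new content.
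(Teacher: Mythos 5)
Your proposal is correct, and its main line of argument is genuinely different from the paper's. The paper proves the theorem by the compactness/product device alone: since $G$ has finitely many normal subgroups, there are finitely many equivalence classes of relational morphisms $\varphi\colon (G\times B,S)\to (Q,T)$ with $Tc\leqslant k-1$ under the relation $N(\varphi)=N(\vartheta)$; picking representatives $\vartheta_1,\dots,\vartheta_n$ and forming the product relational morphism into $\prod_i (Q_i,T_i)$ yields $\varphi_k$ with $N(\varphi_k)=\bigcap_i N(\vartheta_i)=N_J^{(k)}$. That is exactly your closing parenthetical. Your primary argument instead exploits the other characterization of $N_J^{(k)}$: you verify that the family $\mathcal{F}$ of normal subgroups $N$ for which $\GM_J(G/N)$ admits a flow of complexity at most $k-1$ is nonempty, finite, and intersection-closed, so $N_J^{(k)}\in\mathcal{F}$; you then realize the corresponding flow as a relational morphism $\psi$ out of $(G_N\times B_N,\GM_J(G,N))$ and pull it back along the canonical surjection $\mu$, using $\mu^{-1}\mu=\mathrm{id}$ to get $N(\mu\psi)\subseteq N_J^{(k)}$, with the reverse inclusion free from the intersection description. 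Both routes rest only on facts recorded in the preceding discussion. The product construction is shorter and needs no analysis of the quotient $\GM_J(G,N)$, but it leaves implicit the claim that $N$ of a product relational morphism is the intersection of the $N(\vartheta_i)$ (the paper calls this ``clearly''); your construction makes the witness more explicit --- it exhibits $\varphi_k$ as a single pulled-back flow rather than a large product --- at the cost of having to check carefully that $\mu^{-1}\mu\,\psi=\psi$ transports the flow, which you correctly flag and which does go through since $\mu$ is a surjective function on both states and semigroup elements.
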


\begin{proof}
Define an equivalence relation on the collection of all relational morphisms $\varphi \colon (G \times B,S) \rightarrow (Q,T)$, where $T$ is a 
semigroup of complexity at most $k-1$ by setting $\varphi \colon (G \times B,S) \rightarrow (Q,T)$ equivalent to 
$\vartheta \colon (G \times B,S) \rightarrow (Q',T')$ if $N(\varphi)=N(\vartheta)$. Since $G$ is finite, there are a finite number $n$ of equivalence 
classes. Pick representatives of each equivalence class $\vartheta_{i} \colon (G \times B,S) \rightarrow (Q_{i},T_{i})$ for $i=1, \ldots, n$. The product morphism 
$\varphi_{k}\colon (G \times B,S) \rightarrow \prod_{i=1}^{n}(Q_{i},T_{i})$ satisfies $\varphi_{k}=\bigcap_{i=1}^{n}N(\vartheta_{i})$ and this is clearly $N_{J}^{(k)}$.

\end{proof}

We recall the notion of a pointlike subset of a finite semigroup $S$ with respect to a pseudovariety $V$. By definition a subset $X$ is pointlike with 
respect to $V$ if and only if for all relational morphisms $r \colon S \rightarrow T$ where $T \in V$, we have $X \subseteq tr^{-1}$ for some 
$t \in T$. The pointlike subsemigroup $\Pl_{V}(S)$ is the subsemigroup of the power subsemigroup $P(S)$ consisting of all subsets pointlike with 
respect to $V$. For more details see~\cite[Chapter 4]{RS.2009},

We now prove that $N_{J}^{(k)}$ is the maximal subgroup of $G_{J}$ that is pointlike for $C_{k}$ with $k>0$. The following is well-known and we include it 
for completeness.

\begin{lem}
\label{PLcontained}
Let $V$ and $W$ be pseudovarieties with $V \subseteq W$. Then $\Pl_{W}(S) \subseteq \Pl_{V}(S)$ for any semigroup $S$.
\end{lem}

\begin{proof}
Let $ X \in \Pl_{W}(S)$ and let $r \colon S \rightarrow T$ be a relational morphism with $T \in V$. Since $V \subseteq W$ it follows that there is a $t \in T$ 
such that $X \subseteq tr^{-1}$. Therefore $X \in \Pl_{V}(S)$.   
\end{proof}

The next lemma will allow us to reduce the computation of pointlike subgroups in $C_{k}$ to pointlike subgroups of $Gp*C_{k-1}$, where $Gp$ 
is the pseudovariety of finite groups. 

\begin{lem}
\label{PLreduction}
Let $V$ be a pseudovariety and $N$ a subgroup of a semigroup $S$. Then $N \in \Pl_{Ap*Gp*V}(S)$ if and only if $N \in \Pl_{Gp*V}(S)$. 
\end{lem}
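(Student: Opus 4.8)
\textbf{Proof proposal for Lemma~\ref{PLreduction}.}

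The plan is to prove both inclusions, the nontrivial direction being that $N \in \Pl_{Gp*V}(S)$ implies $N \in \Pl_{Ap*Gp*V}(S)$; the reverse follows immediately from Lemma~\ref{PLcontained} since $Gp*V \subseteq Ap*Gp*V$. So fix a subgroup $N \leqslant S$ with $N \in \Pl_{Gp*V}(S)$, and let $r \colon S \rightarrow W$ be an arbitrary relational morphism with $W \in Ap*Gp*V$. We must produce a single element $w \in W$ with $N \subseteq wr^{-1}$. The idea is to use the wreath product characterization of $Ap*Gp*V$: since $W$ divides a wreath product $A \wr U$ with $A$ aperiodic and $U \in Gp*V$, we can pull back $r$ along the division and compose with the projection $\pi \colon A \wr U \rightarrow U$, which is an \emph{aperiodic} morphism. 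This gives a relational morphism $\rho \colon S \rightarrow U$ with $U \in Gp*V$.

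First I would handle the bookkeeping of divisions: if $W$ divides $A \wr U$ then there is a subsemigroup $W' \leqslant A \wr U$ and a surjection $\eta \colon W' \rightarrow W$, and $r' = r \circ \eta^{-1} \colon S \rightarrow W'$ is a relational morphism; then $\rho = r' \circ \pi|_{W'} \colon S \rightarrow U$, possibly after restricting to the image of $\pi|_{W'}$ so that the target lies in $Gp*V$ (a subsemigroup of $U$, hence still in $Gp*V$). Now since $N \in \Pl_{Gp*V}(S)$, there is a $u \in U$ with $N \subseteq u\rho^{-1}$. The key step is then to \emph{lift} $u$ back through the aperiodic morphism to control the image of $N$ in $W'$ — and this is where the hypothesis that $N$ is a subgroup is essential rather than just any subset. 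The set $W'_u = \{ w' \in W' \mid (n, w') \in r' \text{ for some } n \in N \}$ maps into $\pi^{-1}(u) \cap W'$. I claim $N r'$ — more precisely the relational image in $W'$ of the subgroup $N$ — is contained in a single $\mathcal{H}$-class, or at least can be collapsed to a point, because $\pi|_{W'}$ is injective on subgroups: the preimage under an aperiodic morphism of a single element meets each subgroup of $W'$ in at most one point, so the subgroup structure forces the relational image of $N$ to sit inside one aperiodic "fiber" on which we can further quotient.

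More carefully, the argument I expect to run is: choose an idempotent $e \in N$ (namely the identity of $N$). Then $(e, w'_e) \in r'$ for some $w'_e$, and by standard relational-morphism arguments (replacing $w'_e$ by $w'_e{}^{\omega}$) we may take $w'_e$ idempotent. For any $n \in N$, pick $(n, w'_n), (n^{-1}, w'_{n^{-1}}) \in r'$ with $w'_e w'_n w'_e = w'_n$ after multiplying by $w'_e$ on both sides (using that $(e n, w'_e w'_n) \in r'$ and $en = n$, etc.). Then $w'_n w'_{n^{-1}}$ and $w'_{n^{-1}} w'_n$ both reduce to $w'_e$-conjugate idempotents, so all the $w'_n$ lie in the group $\mathcal{H}$-class of $w'_e$ in $W'$; call this group $H$. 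Since $\pi|_H \colon H \rightarrow U$ is injective ($\pi$ aperiodic) and $\pi(w'_n) = u_n$ where all $u_n \in u\rho^{-1}$-related data — in fact $\pi(w'_n)$ depends on $n$, but we only need: the composite relational morphism $S \rightarrow H/[\text{commutator or trivial}]$... hmm, here I need $H$ itself to be pointlike-collapsible. The cleanest finish: the image of $N$ under the relational morphism $r'$ restricted to $H$ is a subgroup-like subset of $H$, and since $H \in Ap*Gp*V \cap Gp = $ a group in the relevant variety, and $Gp*V$ is closed under... Actually, the real point is simpler: compose $r'$ with the projection $W' \to W' / (\text{the congruence collapsing } A\text{-coordinates})$, i.e., with $\pi$, and note $N\rho \subseteq u U^{...}$. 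Then lift: $Nr = Nr'\eta \subseteq $ (fiber over $u$) $\cap W$, and the fiber over $u$ under an aperiodic morphism, intersected with the image of a subgroup, is a single point because aperiodic morphisms are injective on subgroups and the relational image of a subgroup under any relational morphism, once multiplied into an $\mathcal{H}$-class, is contained in a subgroup of the target.

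\textbf{The main obstacle.} The delicate point is making precise the claim that the relational image of the subgroup $N$ under $r'$ can be forced to lie inside a single group $\mathcal{H}$-class $H$ of $W'$, and then that $\pi|_H$ being injective pins $Nr$ down to one element. The first part is a now-standard "stabilization by idempotent" maneuver for relational morphisms (replace covers by $\omega$-powers, multiply by the chosen idempotent cover of $e \in N$), but one must be careful that after these replacements the inclusion $N \subseteq wr^{-1}$ one is building is not lost; the second part relies precisely on $\pi \colon A \wr U \to U$ being injective on subgroups (which is exactly the statement, recalled in the excerpt, that $\pi_A$ is an aperiodic morphism). I would write the stabilization step as an explicit lemma: for a relational morphism $r' \colon S \to W'$ and a subgroup $N \leqslant S$ with identity $e$, there is an idempotent $\bar e \in er'$ such that $N r' \cap \bar e W' \bar e$ lies in the group $\mathcal{H}$-class of $\bar e$ and still relationally covers all of $N$. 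Granting that, the proof is short: apply it, note the group $\mathcal{H}$-class maps injectively to $U$ under $\pi$, use $N$-pointlikeness in $Gp*V$ to collapse the $U$-image to a point $u$, conclude $Nr'$ is inside the single point $\pi|_H^{-1}(u)$, and push forward through $\eta$ to get $Nr \subseteq \{wr^{-1}\text{-witness}\}$, i.e. $N \in \Pl_{Ap*Gp*V}(S)$.
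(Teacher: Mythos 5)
Your overall strategy is the same as the paper's: the easy direction is Lemma~\ref{PLcontained}, and for the hard direction you factor the given relational morphism into $Ap*Gp*V$ through an aperiodic (relational) morphism onto something in $Gp*V$, use $Gp*V$-pointlikeness of the subgroup $N$ to confine its covers to the fiber over a single idempotent $u$, and then collapse that fiber using aperiodicity together with the hypothesis that $N$ is a group. (The paper phrases the first step abstractly --- any $T \in Ap*Gp*V$ admits an aperiodic relational morphism $\theta$ to some $U \in Gp*V$ --- so it never unpacks the wreath product; that difference is cosmetic.)

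The genuine gap is in your collapsing step. The ``stabilization lemma'' you propose is false as stated: replacing each cover $w'_n$ by $\bar e\, w'_n\, \bar e$ only places it in the local monoid $\bar e W' \bar e$, not in the group $\mathcal{H}$-class of $\bar e$, and the claim that $w'_n w'_{n^{-1}}$ and $w'_{n^{-1}} w'_n$ ``reduce to $\bar e$-conjugate idempotents'' is unjustified --- these are merely covers of the identity $e$ of $N$, and nothing forces them to be idempotent or $\mathcal{H}$-related to $\bar e$. For the same reason ``$Nr' \cap \bar e W' \bar e$ lies in the group $\mathcal{H}$-class of $\bar e$'' cannot hold for arbitrary relational morphisms. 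What you actually need is weaker and standard: there \emph{exists} a choice of covers of $N$ lying in a single subgroup of the fiber. Concretely, the fiber $u\pi^{-1} \cap W'$ over the idempotent $u$ is an aperiodic subsemigroup (this is exactly what ``$\pi$ is an aperiodic morphism'' buys you); the graph $\#r' \cap \bigl(N \times (u\pi^{-1}\cap W')\bigr)$ is a subsemigroup projecting onto $N$; a minimal subsemigroup of it still projecting onto $N$ is completely simple (its minimal ideal projects onto an ideal of the group $N$, i.e.\ onto $N$); and any group $\mathcal{H}$-class of that completely simple semigroup projects onto $eNe = N$ on one side and into a subgroup of the aperiodic fiber --- hence onto a single idempotent $f$ --- on the other. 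Equivalently, and this is how the paper disposes of it in one line: subgroups are pointlike for $Ap$, so the restricted relational morphism from $N$ into the aperiodic semigroup $u\pi^{-1}\cap W'$ covers all of $N$ by a single idempotent $f$, whence $N \subseteq f r^{-1}$. With that repair your argument closes.
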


\begin{proof}
Since $Gp*V \subseteq Ap*Gp*V$, Lemma \ref{PLcontained} implies that $\Pl_{Ap*Gp*V}(S) \subseteq \Pl_{Gp*V}(S)$. For the opposite inclusion let 
$N$ be a subgroup of $S$ and $N \in \Pl_{Gp*V}(S)$. Let $r \colon S \rightarrow T$ be a relational morphism with $T \in Ap*Gp*V$. There exists an 
aperiodic relational morphism $\theta \colon T \rightarrow U$ for some semigroup $U \in Gp*V$. 

Since $N \in \Pl_{Gp*V}(S)$ is a subgroup, we can choose an idempotent $e$ 
such that $N \subseteq e\theta^{-1}r^{-1}$. Note that $e\theta^{-1}$ is an 
aperiodic subsemigroup of $T$ since $\theta$ is an aperiodic morphism.
Therefore there is an idempotent $f \in e\theta^{-1}$ such that 
$N\subseteq fr^{-1}$. This proves that $N \in \Pl_{Ap*Gp*V}(S)$.
\end{proof}

We can now prove the main theorem of this subsection. First note that if $G$ is a maximal subgroup of a semigroup $S$, then there is a unique 
maximal subgroup of $S$ that is pointlike with respect to any pseudovariety $V$. This follows since a pointlike subgroup is contained in the inverse 
of an idempotent and thus its normal closure is also contained in the inverse image of that idempotent and that the product of pointlike sets is pointlike. 
Therefore the product of all normal subgroups of $G$ that are pointlike with respect to $V$ is the unique maximal pointlike subgroup of $G$ with 
respect to $V$. We use this remark  in the proof of the next theorem.

\begin{thm}
\label{maxpl}
Let $k>0$ and let $S$ be a $\GM$ semigroup with distinguished $\mathcal{J}$-class $J$ and $I(S) \approx M^{0}(A,G,B,C)$. 
Then $N_{J}^{(k)}$ is the maximal subgroup of $G$ that is pointlike with respect to $C_{k}$.    
\end{thm}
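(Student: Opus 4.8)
The plan is to move between the two descriptions using the standard pseudovariety factorization $C_{k} = Ap * Gp * C_{k-1}$. Applying Lemma~\ref{PLreduction} with $V = C_{k-1}$, a subgroup $N$ of $G$ is pointlike for $C_{k} = Ap * (Gp * C_{k-1})$ if and only if it is pointlike for $Gp * C_{k-1}$. By the remark preceding the theorem, $G$ has a unique maximal pointlike subgroup for any pseudovariety, so it suffices to prove: (a) $N_{J}^{(k)}$ is pointlike for $Gp * C_{k-1}$; and (b) every subgroup of $G$ pointlike for $Gp * C_{k-1}$ is contained in $N_{J}^{(k)}$.

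For (a), let $r \colon S \rightarrow W$ be a relational morphism with $W \in Gp * C_{k-1}$. Since $W$ divides $G' \wr U$ for some finite group $G'$ and some $U \in C_{k-1}$, and since proving the pointlike containment for $r$ composed with the division relational morphism $W \to G' \wr U$ implies it for $r$, we may assume $W = G' \wr U$. Composing $r$ with the projection $\pi_{U} \colon G' \wr U \rightarrow U$ yields, after the standard passage to a relational morphism of transformation semigroups, a relational morphism $\varphi \colon (G \times B, S) \rightarrow (Q_{U}, U)$ with $Uc \leqslant k-1$; hence, by the description of $N_{J}^{(k)}$ as the intersection of all $N(\psi)$ over relational morphisms $\psi$ to semigroups of complexity at most $k-1$, we have $N_{J}^{(k)} \subseteq N(\varphi)$. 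Thus the elements of $N_{J}^{(k)}$ all sit in one class of the minimal injective congruence on $D(\varphi)$. Since $\varphi = r\pi_{U}$ and the derived semigroup of $\pi_{U}$ is the group $G'$, one can use Theorem~\ref{DST} to compare the minimal injective congruences on $D(r)$ and on $D(\varphi)$ and conclude that the only identification $D(\varphi)$ adds over $D(r)$ is ``modulo $G'$''; consequently the elements of $N_{J}^{(k)}$ already have a common value under $r$, i.e.\ $\bigcap_{n \in N_{J}^{(k)}} r(n) \neq \emptyset$. This is precisely the statement $N_{J}^{(k)} \subseteq w r^{-1}$ for a suitable $w$, so $N_{J}^{(k)}$ is pointlike for $Gp * C_{k-1}$.

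For (b), let $N \leqslant G$ be pointlike for $Gp * C_{k-1}$. By the compactness theorem established above there is a relational morphism $\varphi_{k} \colon (G \times B, S) \rightarrow (Q_{k}, T_{k})$ with $T_{k}c \leqslant k-1$ and $N(\varphi_{k}) = N_{J}^{(k)}$, so $\GM_{J}(G, N_{J}^{(k)})$ has a flow with respect to $T_{k}$. By Theorem~\ref{PLflow} this flow gives a division
\[
\bigl(G_{N} \times B_{N},\ \GM_{J}(G, N_{J}^{(k)})\bigr) \ \prec\ \bigl(G_{N} \wr \mathsf{Sym}(B_{N}) \wr T_{k}\bigr) \times \bigl(B_{N},\ \RLM(\GM_{J}(G, N_{J}^{(k)}))\bigr).
\]
The resolution semigroup $\Res = G_{N} \wr \mathsf{Sym}(B_{N}) \wr T_{k}$ lies in $Gp * C_{k-1}$, because $G_{N} \wr \mathsf{Sym}(B_{N})$ is a group and $T_{k}c \leqslant k-1$; and on the distinguished $\mathcal{J}$-class the $\RLM$-factor of the division records only $\mathcal{L}$-classes and carries no group information, so the $\Res$-component of the division already separates the group $G_{N} = G / N_{J}^{(k)}$. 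Composing the canonical surjection $S \onto \GM_{J}(G, N_{J}^{(k)})$ (which on $G$ is $g \mapsto g N_{J}^{(k)}$) with this $\Res$-component yields a relational morphism $\sigma \colon S \rightarrow \Res$ with $\Res \in Gp * C_{k-1}$ whose point preimages separate distinct cosets of $N_{J}^{(k)}$. Since $N$ is pointlike for $Gp * C_{k-1}$, all of $N$ lies in a single $\sigma$-preimage and hence in a single coset of $N_{J}^{(k)}$; as $1 \in N$ this coset is $N_{J}^{(k)}$ itself, so $N \subseteq N_{J}^{(k)}$.

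The main obstacle is the closing step of part (a): making rigorous that passing from $r$ to $r\pi_{U}$ ``costs only the group $G'$'', so that membership in the obstruction $N(r\pi_{U})$ --- which is extracted from the minimal injective congruence of $D(r\pi_{U})$ --- forces the elements of $N_{J}^{(k)}$ to have a common value under $r$ itself. This amounts to a careful comparison of the minimal injective congruences on $D(r)$ and $D(r\pi_{U})$ through $D(\pi_{U}) = G'$, using the Derived Semigroup Theorem together with the structure of the minimal ideal of the $\GM$ semigroup; everything else is a routine application of the flow and Presentation Lemma machinery and of the functoriality of the constructions $\GM_{J}(-)$ and $\RLM(-)$.
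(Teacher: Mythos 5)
Your overall strategy matches the paper's: reduce from $C_{k}=Ap*Gp*C_{k-1}$ to $Gp*C_{k-1}$ via Lemma~\ref{PLreduction}, then prove the two containments separately. But the step you yourself flag as the ``main obstacle'' in part (a) is a genuine gap, and it is exactly the point where the paper instead invokes a known theorem. What you need is: if a subgroup lies in a single class of the minimal injective congruence on $D(\psi)$ for every relational morphism $\psi$ to a member of $C_{k-1}$, then it is $Gp*C_{k-1}$-pointlike. Your attempted derivation --- compose $r\colon S\to W\prec G'\wr U$ with $\pi_{U}$ and argue that $D(r\pi_{U})$ only identifies ``modulo $G'$'' --- does not work as stated: the minimal injective congruence on $D(r\pi_{U})$ is determined by $r\pi_{U}$ alone and can be strictly coarser than the partition of $\#r\pi_{U}$ induced by the fibers of $r$, so membership of $N_{J}^{(k)}$ in one of its classes gives no direct control over $r$-values. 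The correct bridge is the characterization of $Gp*V$-pointlikes in terms of admissible partitions on derived transformation semigroups, namely \cite[Theorem 4.14.20]{RS.2009}; the paper cites it for both inclusions and the proof is then two lines. Without that theorem (or a proof of it), part (a) is not established.

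Part (b) takes a genuinely different, more constructive route than the paper (which again just applies the same cited theorem in the other direction): you build one explicit relational morphism to $\Res=G_{N}\wr\mathsf{Sym}(B_{N})\wr T_{k}\in Gp*C_{k-1}$ that separates cosets of $N_{J}^{(k)}$ and let pointlikeness of $N$ do the rest. The idea is sound, but the assertion that ``the $\Res$-component of the division already separates $G_{N}$'' also needs an argument: the division lands in $\Res\times(B_{N},\RLM(\cdot))$, and a priori two elements of $G_{N}$ in distinct cosets could be covered by the same $\Res$-component paired with \emph{different} $\RLM$-components, in which case projecting the covering relation to $\Res$ would not separate them. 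One has to combine the fact that $\mathcal{H}$-equivalent elements of the distinguished $\mathcal{J}$-class act identically in $\RLM$ with the injectivity of the division on states to rule this out. So (b) is repairable with some work; (a) is the real missing piece, and it is precisely the content of the theorem the paper quotes.
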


\begin{proof}
By Lemma~\ref{PLreduction} to show that $N_{J}^{(k)} \in \Pl_{C_{k}}(S)$ it is enough to prove that $N_{J}^{(k)} \in \Pl_{Gp*C_{k-1}}(S)$. By definition, 
$N_{J}^{(k)}$ is contained in a partition class of every admissible partition on the derived transformation semigroup 
$D(\varphi)$ of a relational morphism $\varphi$ from $S$ to a semigroup in $C_{k-1}$. It follows immediately from~\cite[Theorem 4.14.20]{RS.2009} 
that $N_J^{(k)} \in \Pl_{Gp*C_{k-1}}(S)$.

Now let $N$ be any subgroup of $G_{J}$ that is pointlike with respect to $Gp*C_{k-1}$. Again by~\cite[Theorem 4.14.20]{RS.2009}, $N$ is contained in 
a partition class of every admissible partition on the derived transformation semigroup $D(\varphi)$ of a relational morphism $\varphi$ from $S$ to a 
semigroup in $C_{k-1}$. Since $N_{J}^{(k)}$ is the intersection of all such subgroups, we thus have $N \subseteq N_{J}^{(k)}$.
\end{proof}

\begin{cor}
$N_{J}^{(k)}$ is both the maximal subgroup of $G_J$ that is pointlike with respect to $C_k$ and the minimal normal subgroup $N$ of $G_{J}$ 
such that $\GM_{J}(G_{J}/N)$ admits a flow of complexity at most $k-1$.
\end{cor}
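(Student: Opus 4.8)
The plan is to read the corollary off from results already in hand; it is essentially a repackaging of Theorem~\ref{maxpl} together with the definition of the flow normal subgroup spread, so the proof is brief. The first clause is precisely the content of Theorem~\ref{maxpl}, which states that $N_{J}^{(k)}$ is the maximal subgroup of $G_{J}$ that is pointlike with respect to $C_{k}$, so nothing new is needed for that half.

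For the second clause I would appeal to the construction of $N_{J}^{(k)}$ itself. By Definition~\ref{flownormal.def}, together with the discussion preceding it, $N_{J}^{(k)}$ is by definition the smallest normal subgroup $N$ of $G_{J}$ for which $\GM_{J}(G_{J}/N)$ admits a flow with respect to some semigroup of complexity at most $k-1$, i.e.\ a flow of complexity at most $k-1$ in the sense of Theorem~\ref{PLflow}. The only point worth making explicit is that such a smallest normal subgroup is well defined: if $N_{1}$ and $N_{2}$ are normal subgroups of $G_{J}$ such that $\GM_{J}(G_{J}/N_{i})$ carries a flow of complexity at most $k-1$ for $i=1,2$, then, as already observed in the text, so does $\GM_{J}(G_{J}/(N_{1}\cap N_{2}))$. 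Hence the family of normal subgroups of $G_{J}$ with this flow property is closed under intersection, and since $G_{J}$ is finite it has a least member, which is exactly $N_{J}^{(k)}$. This is the second clause.

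Putting the two descriptions of $N_{J}^{(k)}$ side by side gives the corollary. Because every ingredient has already been established, there is no real obstacle here; the only care required is terminological, namely making sure that ``a flow of complexity at most $k-1$'' is interpreted uniformly across Theorem~\ref{PLflow}, the construction of the flow normal subgroup spread, and the statement being proved. As an optional remark, specializing the equivalence to $\GM_{J}(G,N_{J}^{(k)})$ itself recovers the fact recorded just above the corollary, that $(G\times B,S)$ has a flow of complexity at most $k-1$ precisely when $N_{J}^{(k)}$ is trivial, which makes the agreement of the two descriptions transparent at the trivial end of the spread.
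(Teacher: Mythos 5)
Your proposal is correct and matches the paper's (implicit) justification: the corollary is stated without proof precisely because it combines Theorem~\ref{maxpl} with the definition of $N_{J}^{(k)}$ as the least normal subgroup admitting a flow of complexity at most $k-1$, whose existence rests on the intersection-closure observation you cite. Nothing further is needed.
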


\begin{remark}
\mbox{} 
\begin{enumerate}
\item  We will prove that membership in $N_J^{(k)}$ is decidable in Proposition \ref{njkdecidable}.
\item The slice theorem by B. Steinberg (see~\cite[Section 2.7]{RS.2009}) can be reformulated in terms of transformation semigroup as explained 
in the section on presentations~\cite[Section 4.14.2]{RS.2009}. This formulation proves that all $N_J^{(k)}$ give enough members of 
$\Pl_{C_{k}}(E_{k}^{+})$ to consider in the definition of $E_{k}^{+}$ and $E_{k}^{+}$ in Section~\ref{section.Mk}. We explain this in detail in 
the next subsection.
\end{enumerate}
\end{remark}

\subsection{Flow monoids and evaluation transformation semigroups }
\label{section.Mk}

In this subsection, we modify the definition of the $k^{th}$ flow monoid $M_{k}(L)$ in~\cite[Definition~4.26]{HRS.2012}, a submonoid of 
$\mathcal{C}(L^{2})$. This gives an improved version of the evaluation transformation semigroups of~\cite[Section 4]{HRS.2012}. These are used in 
our criterion for decidability of complexity.

The reader should look at the definition of $k$-loopable elements \cite[Definiton 4.12]{HRS.2012} and of the $k^{th}$ flow monoid 
$M_{k}(L)$ \cite[Definition 4.26]{HRS.2012}. We define a monoid $F_{k}(L)$ by keeping the first 4 parts of the definition of $M_{k}(L)$. We replace 
part (5) of the definition of $M_{k}(L)$  with the following new part (5F):

\begin{enumerate}
\item[(5F)] Let $k \geqslant 0$. If $f \in F_{k}(L)$ and $f \in N_J^{(k)}$ of the $k^{th}$-flow spread for $F_{k}(L)$, then 
$(f)^{\omega+\star} \in F_{k}(L)$.
\end{enumerate}

Equivalently by Theorem \ref{maxpl} we can formulate part (5F) as follows.

\begin{enumerate}
\item[(5F)] Let $k \geqslant 0$. If $f \in F_{k}(L)$ and $f$ belongs to the maximal subgroup of $G_{J}$ that is pointlike with respect to $C_{k}$ in $F_{k}(L)$, then 
$(f)^{\omega+\star} \in F_{k}(L)$.
\end{enumerate}

We now recall the evaluation transformation semigroup as defined in~\cite[Section 4]{HRS.2012}. It is based on an action of $M_{k}(L)$. To distinguish 
it from the modified evaluation transformation semigroup we define in this paper, we denote it by $\operatorname{Loopeval}_{k}(L)$. The evaluation 
transformation semigroup we use in this paper is based on an action of $F_{k}(L)$ and will be denoted by $\operatorname{Eval}_{k}(L)$. 

The vacuum $V$ is the join of $\overleftarrow{f}$ over all $f\in M_{k}(L)$. Note that $V$ is an idempotent operator and belongs to $M_{k}(L)$. 
For details about the Vacuum operator $V$ consult \cite[Section 4]{HRS.2012} especially Definitions~4.26 and~4.39. 

In the following we will be working in the submonoid $VM_{k}(L)V$, where $V$ is the Vacuum. It is very important to note the following. 
Let  $l \in LV$ and $f \in VM_{k}(L)V $. Then for all $l' \in L$ $(l,l')f=(l,l")$ for some $l" \in LV$. It follows that this gives an action of $VM_{k}(L)V$ on 
$LV$ defined by $lf = \operatorname{min}\{l" \mid (l,l") \in LVf\}$. See \cite[Section 4.5]{HRS.2012} for details. In particular, let $f \in M_{k}(L)$ and 
$f \in N_J^{(k)}$ the $k^{th}$-flow spread for $M_{k}(L)$. Then for all $l \in LV$, $lf^{\omega +*}$ is the least $l'$ such that $lf^{\omega}=l'$ and 
$l'f=l'$. We take this as the definition of $f^{\omega+*}$ in this paper.
In~\cite[Section 4]{HRS.2012} the Vacuum for $M_{k}(L)$ is denoted by $\mathcal{F}_{k}$.

We thus have a transformation semigroup with state set $LV$ and semigroup the image of $M_{k}(L)$ in the action above. We will restrict this 
action to a subset $\operatorname{States}_{k}(L)$ of $LV$. We refer the reader to \cite[Definition 5.1-5.2]{HRS.2012} for the definitions of Well-Formed Formulae
(WFFs) and the Standard Interpretation. We modify the definition of the set $\operatorname{States}_{k}(L)$  of \cite[Definition 5.3]{HRS.2012} by removing 
paragraph~(3) therein.
That is, we do not close the states under the partial order on $L$. This with our definition of $F_{k}(L)$ gives the transformation semigroup, 
called the Evaluation transformation semigroup $\operatorname{Eval}_{k}(L)=(\operatorname{States}_{k}(L),E_{k}(L))$, where $E_{k}(L)$ is the image 
of $VF_{k}(L)V$ in its action on $\operatorname{States}_{k}(L)$, an invariant subset of $LV$.

Before continuing, we compare the evaluation transformation monoid defined via $M_{k}(L)$ and our modified version based on our $F_{k}(L)$. Recall that in any semigroup $S$ and $s \in S$, we define 
$s^{\omega + 1}=ss^{\omega}=s^{\omega}s$. We note that $s^{\omega +1}$ is in the unique maximal subgroup of the subsemigroup generated by $s$.

\begin{lem}
Let $f \in M_{k}(L)$ or $f \in F_{k}(L)$. Then 
$f^{\omega +\star}=(f^{\omega+1})^{\omega+\star}$.
\end{lem}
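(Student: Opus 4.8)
The plan is to reduce the identity $f^{\omega+\star} = (f^{\omega+1})^{\omega+\star}$ to two observations: first, that the two operators $f \mapsto f^\omega$ and $g \mapsto g^{\omega+1}$ generate the same idempotent, namely $f^\omega = (f^{\omega+1})^\omega$; and second, that the ``$\star$'' tail condition is governed only by the subgroup membership of $f$ versus $f^{\omega+1}$, which agree. Since the excerpt defines $f^{\omega+\star}$ as the transformation of $LV$ sending $l$ to the least $l'$ with $l f^\omega = l'$ and $l' f = l'$ (working inside $VM_k(L)V$ acting on $LV$), I would unwind this definition for both $f$ and $f^{\omega+1}$ and compare.

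First I would record the purely semigroup-theoretic fact that in any finite semigroup, $f^{\omega+1} = f f^\omega = f^\omega f$ lies in the maximal subgroup $H$ of the cyclic subsemigroup $\langle f \rangle$, that $H$ has identity $f^\omega$, and that $f^{\omega+1}$ is a generator of $H$ (indeed $H$ is cyclic generated by $f^{\omega+1}$). Consequently $(f^{\omega+1})^\omega = f^\omega$, since the idempotent power of any element of the group $H$ is the identity $f^\omega$ of $H$. This handles the ``$\omega$'' part: for any $l \in LV$ we have $l f^\omega = l (f^{\omega+1})^\omega$.

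Next I would compare the ``$\star$'' parts. By definition $l f^{\omega+\star}$ is the least $l'$ with $l f^\omega = l'$ and $l' f = l'$, while $l (f^{\omega+1})^{\omega+\star}$ is the least $l'$ with $l (f^{\omega+1})^\omega = l'$ and $l' f^{\omega+1} = l'$. Using the previous paragraph the first constraint is the same in both cases, so it suffices to check that for a closed element $l' = l f^\omega$ (equivalently $l'$ with $l' f^\omega = l'$, which both $f$-stability and $f^{\omega+1}$-stability force), the condition $l' f = l'$ is equivalent to $l' f^{\omega+1} = l'$. One direction is immediate: if $l' f = l'$ then $l' f^{\omega+1} = l' f^\omega f = l' f^\omega$, and $l' f^\omega = l'$ since $l' f = l'$ iterates to give $l' f^n = l'$ for all $n\geqslant 1$, hence $l' f^\omega = l'$; so $l' f^{\omega+1} = l'$. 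Conversely, if $l' f^{\omega+1} = l'$, then since $f^{\omega+1}$ is an invertible element of the group $H$ acting on the (finite) fixed-point set, and $l' f^\omega = l'$, applying the inverse of $f^{\omega+1}$ in $H$ repeatedly we get $l' (f^{\omega+1})^m = l'$ for all integers $m$; choosing $m$ so that $(f^{\omega+1})^m = f^{\omega+1}\cdot f^\omega \cdots = f \cdot f^\omega$ in $H$ — more carefully, since $f f^\omega = f^{\omega+1}$ is a power of itself in the cyclic group $H$ — we deduce $l' f^{\omega+1} = l'$ gives $l' (f f^\omega) = l'$, and then $l' f = l' f f^\omega = l'$ because $l' f^\omega = l'$ implies right-multiplication by $f$ and by $f f^\omega$ have the same effect on $l'$. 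Thus the two sets of fixed points coincide, their least elements coincide, and $l f^{\omega+\star} = l (f^{\omega+1})^{\omega+\star}$ for all $l \in LV$, proving the equality of operators.

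The main obstacle I anticipate is being careful about the two slightly different but coexisting definitions of $f^{\omega+\star}$ in play — the relational/Boolean-matrix one (``$(l,l') \in f^{\omega+\star}$ iff $(l,l') \in f^\omega$ and $(l',l') \in f$'') from the earlier subsection, versus the action-on-$LV$ one (``$l f^{\omega+\star} = \min\{l'' \mid l f^\omega = l'',\ l'' f = l''\}$'') used in this section — and making sure the argument respects whichever is the operative one; the cleanest route is to note these agree on $LV$ under the Vacuum (as the excerpt asserts) and then argue entirely with the action definition, where the key technical point is simply that $f$ and $f^{\omega+1}$ stabilize the same elements of $LV$ once one has passed above $f^\omega$. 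A secondary subtlety is to confirm that $f^{\omega+1}$ (not just $f^\omega$) lies in $F_k(L)$ or $M_k(L)$, or at least that $(f^{\omega+1})^{\omega+\star}$ is a legitimate expression — but this is immediate since $f^{\omega+1} = f f^\omega$ is a product of elements already in the monoid, and its own $\omega$-power is $f^\omega$, so the ``$(5)$''/``$(5F)$'' closure clause applies to it verbatim.
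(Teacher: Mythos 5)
Your proposal is correct and follows essentially the same route as the paper: both rest on the identity $(f^{\omega+1})^{\omega}=f^{\omega}$ together with the equivalence, for $l'=lf^{\omega}$, of the fixed-point conditions $l'f=l'$ and $l'f^{\omega+1}=l'$. In fact you supply slightly more detail than the paper on the converse implication (via $l'f=l'f^{\omega}f=l'f^{\omega+1}=l'$), which the paper simply asserts; the digression about inverses in the group $H$ is unnecessary but harmless.
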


\begin{proof}
Note that $(f^{\omega+1})^{\omega}=f^{\omega}$. Assume that for a state $l$, we have $lf^{\omega +\star}=l'$. Then $lf^{\omega}=l'$ and $l'f=l'$. It follows 
that $l'f^{n}=l'$ for all $n>0$ and in particular, $l'f^{\omega+1}=l'$. Conversely if $lf^{\omega}=l'$ and $l'f^{\omega+1}=l'$, it follows that $l'f = l'$. Therefore,
\[
	l(f^{\omega+1})^{\omega+\star}=(f^{\omega+1})^{\omega}(f^{\omega+1})^{\star}=
	lf^{\omega}(f^{\omega+1})^{\star}=lf^{\omega}f^{\star}=lf^{\omega+*}.
\]
\end{proof}

Since $f^{\omega+1}$ is a group element, the following corollary is immediate.

\begin{cor}
\label{gpelt}
In defining the transformation semigroups $\operatorname{Loopeval}_{k}(L)$ and $\operatorname{Eval}_{k}(L)$ we need only apply the 
${\omega+\star}$ operator to elements that belong to subgroups of $M_{k}(L)$ and $F_{k}(L)$ respectively.    
\end{cor}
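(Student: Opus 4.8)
The plan is to obtain the corollary as an immediate consequence of the preceding Lemma together with the remark, recorded just above it, that $f^{\omega+1}=ff^{\omega}=f^{\omega}f$ lies in the unique maximal subgroup of the monogenic subsemigroup generated by $f$. First I would recall that both $M_{k}(L)$ and $F_{k}(L)$ are, by construction, submonoids of $\mathcal{C}(L^{2})$, hence closed under products; in particular, whenever an element $f$ belongs to one of these monoids, so does $f^{\omega+1}$, and $f^{\omega+1}$ is a group element sitting in a maximal subgroup of that monoid. Next, suppose that at some stage of the inductive construction of $M_{k}(L)$ (clause~(5) together with the notion of $k$-loopable element) or of $F_{k}(L)$ (clause~(5F)) we are required to adjoin $f^{\omega+\star}$. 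By the Lemma, $f^{\omega+\star}=(f^{\omega+1})^{\omega+\star}$ as elements of $\mathcal{C}(L^{2})$, and since the transformation semigroups $\operatorname{Loopeval}_{k}(L)$ and $\operatorname{Eval}_{k}(L)$ are defined through the action on $LV$ (respectively on $\operatorname{States}_{k}(L)$), the same identity holds for the induced transformations; indeed the computation in the proof of the Lemma already carries out the verification at the level of that action. Hence every occurrence of the $\omega+\star$ operator in these constructions may be replaced by its application to the group element $f^{\omega+1}$, which is exactly the content of the corollary.

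I do not anticipate a genuine obstacle: the argument is purely formal once the Lemma is in hand. The only minor points worth a sentence are (i) the closure of $M_{k}(L)$ and $F_{k}(L)$ under $f\mapsto f^{\omega+1}$, which follows from their being semigroups, and (ii) that reformulating the construction in this way does not change the $k$-th flow normal subgroup spread invoked in clause~(5F), since that spread is determined by the underlying monoid $F_{k}(L)$ and not by any particular choice of generators; both are automatic. If anything requires care, it is simply keeping track of the passage from operators on $L^{2}$ to transformations of $LV$, but this translation is already built into the definitions of $\operatorname{Loopeval}_{k}(L)$ and $\operatorname{Eval}_{k}(L)$ given above.
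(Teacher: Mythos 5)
Your proposal is correct and follows the paper's own route exactly: the paper derives the corollary as an immediate consequence of the preceding lemma, namely that $f^{\omega+\star}=(f^{\omega+1})^{\omega+\star}$ together with the observation that $f^{\omega+1}$ always lies in the unique maximal subgroup of the monogenic subsemigroup generated by $f$. Your additional remarks (closure under $f\mapsto f^{\omega+1}$ and the transfer to the action on $LV$) are fine but are exactly the "immediate" details the paper leaves implicit.
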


\begin{prop}
\label{loopsubeval}
For every $k \geqslant 0$, $\operatorname{Loopeval}_{k}(L)$ is a sub-transformation semigroup of $\operatorname{Eval}_{k}(L)$. 
\end{prop}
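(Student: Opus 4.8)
\emph{Proof plan.} The proposition reduces to two inclusions pointing the same way---$\operatorname{Loopeval}_{k}(L)$ into $\operatorname{Eval}_{k}(L)$---one at the level of acting monoids and one at the level of state sets. The substantive one is the containment $M_{k}(L) \subseteq F_{k}(L)$ inside $\mathcal{C}(L^{2})$. The two monoids share the generators coming from parts~(1)--(4) and differ only in the rule allowing the operator $f^{\omega+\star}$. By Corollary~\ref{gpelt} this operator is applied only to elements lying in subgroups, so it is enough to compare the two side conditions on group elements: part~(5) of $M_{k}(L)$ admits $f^{\omega+\star}$ when $f$ is $k$-loopable in the sense of \cite[Definition~4.12]{HRS.2012}, whereas part~(5F) of $F_{k}(L)$ admits it when $f$ belongs to $N_{J}^{(k)}$, the maximal subgroup pointlike for $C_{k}$. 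Hence the inclusion follows once we show that \emph{every $k$-loopable group element is pointlike with respect to $C_{k}$}.

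I would prove this key lemma by unwinding $k$-loopability. A loop witnessing that $f$ is $k$-loopable is precisely a complexity-$(k-1)$ flow for the $\GM$ quotient that collapses $f$; by Theorem~\ref{maxpl} possession of such a flow places $f$ into $N_{J}^{(k)}$, the maximal $C_{k}$-pointlike subgroup of $G_{J}$. Feeding this implication through the inductive construction of the two monoids---each element $f^{\omega+\star}$ adjoined under part~(5) is, by the lemma, also adjoined under part~(5F)---yields $M_{k}(L) \subseteq F_{k}(L)$ as submonoids of $\mathcal{C}(L^{2})$. The same vacuum $V$ is used in both constructions (it is defined as the join of the $\overleftarrow{f}$ over $M_{k}(L)$ and then reused in the definition of $E_{k}(L)$ as the image of $VF_{k}(L)V$), so the inclusion transports through conjugation to give $VM_{k}(L)V \subseteq VF_{k}(L)V$ and an injection of the acting semigroup of $\operatorname{Loopeval}_{k}(L)$ into $E_{k}(L)$.

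It remains to match the state sets: $\operatorname{Loopeval}_{k}(L)$ uses the state set of \cite[Definition~5.3]{HRS.2012} with paragraph~(3) in force, while $\operatorname{Eval}_{k}(L)$ uses the modified $\operatorname{States}_{k}(L)$ of this paper with that paragraph removed, both living inside $LV$ for the common $V$. I would check that deleting the order-closure step leaves a state set containing the Loopeval states as an invariant subset, and that the induced action agrees on them: for a Loopeval state $l$ and $f \in M_{k}(L) \subseteq F_{k}(L)$, the value $lf = \min\{l'' \mid (l,l'') \in LVf\}$ is computed from the same $f$ and the same vacuum in either transformation semigroup. Together with the subsemigroup inclusion of the acting monoids and the inheritance of faithfulness on an invariant subset, this exhibits $\operatorname{Loopeval}_{k}(L)$ as a sub-transformation semigroup of $\operatorname{Eval}_{k}(L)$. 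The main obstacle is the key lemma \emph{$k$-loopable $\Rightarrow$ pointlike for $C_{k}$}: the delicate point is to read the loop data of \cite[Definition~4.12]{HRS.2012} as a complexity-$(k-1)$ flow for the appropriate quotient and then invoke the identification of $N_{J}^{(k)}$ with the maximal $C_{k}$-pointlike subgroup from Theorem~\ref{maxpl}. The state-set compatibility, by contrast, is a routine bookkeeping check once paragraph~(3) is removed.
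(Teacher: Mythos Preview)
Your approach is essentially the paper's: both reduce to the monoid inclusion $M_{k}(L)\subseteq F_{k}(L)$ by showing, via Corollary~\ref{gpelt}, that every $k$-loopable group element lies in $N_{J}^{(k)}$. The paper does this in one line by citing \cite[Proposition~4.13]{HRS.2012}, which already proves that $k$-loopable elements are pointlike for $C_{k}$, and then invoking Theorem~\ref{maxpl}. Your plan to ``unwind'' loopability into a complexity-$(k-1)$ flow is therefore a re-derivation of a result the paper simply quotes; there is no need to reprove it.

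One caution on the state-set paragraph: your assertion that deleting paragraph~(3) leaves a state set \emph{containing} the Loopeval states is not obviously correct. Loopeval takes the order-closure of the $M_{k}$-reachable states, while Eval takes the $F_{k}$-reachable states without order-closure; the inclusion $M_{k}\subseteq F_{k}$ gives only that the $M_{k}$-reachable states sit inside the Eval states, not that their entire down-set does. The paper's proof sidesteps this entirely and argues only at the level of the acting monoid. For the single application made of the proposition---that no contradiction in $\operatorname{Eval}_{k-1}(L)$ implies none in $\operatorname{Loopeval}_{k-1}(L)$---this is harmless, since cross-sections form a down-set and so the order-closure step cannot manufacture a contradiction; but as a literal statement about state sets your ``routine bookkeeping'' claim would need more care than you indicate.
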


\begin{proof}
It follows from Proposition \ref{gpelt} and~\cite[Proposition 4.13]{HRS.2012} that any group element of $M_{k}(L)$ is pointlike for $C_{k}$ in its image as 
an element of the action semigroup of $\operatorname{Loopeval}(L)$. Therefore it belongs to $N_{J}^{(k)}(L)$ by Theorem~\ref{maxpl} and the result follows.    
\end{proof}

From hereon in, we only use $F_{k}(L)$ and $\operatorname{Eval}_{k}(L)$. We use the definition of $\operatorname{States}_{k}(L)$ 
from~\cite[Section 4]{HRS.2012} in this context. For brevity, we write $\operatorname{States}^{+}_{k}$ for $\operatorname{States}_{k}(\SP(G\times B))$, 
$E^{+}_{k}$ for $E_{k}(\SP(G\times B))$, $\operatorname{States}^{-}_{k}$ for $\operatorname{States}_{k}(Rh_{B}(G))$ and $E^{-}_{k}$ for 
$E_{k}(Rh_{B}(G))$. We give the connections between these sets. We will identify $Rh_{B}(G)$ 
with $\CS(G\times B)$ as in Proposition \ref{Updown}.

Fix a $b_{0} \in B$. We call $\sigma=((1,b_{0}), \{(1,b_{0})\})$ the start state of $\operatorname{States}^{+}_{k}$. We note that 
$\operatorname{States}^{+}_{k} = (1,b_{0})E^{+}_{k}$. We call $G\sigma$ the start state for $\operatorname{States}^{-}_{k}$.  
Note that $G\sigma$ has set $G \times \{b_{0}\}$ with the singleton partition. Furthermore, $G\sigma$ is the 
state corresponding to the unique element of $Rh_{B}(G)$ with set $\{b_{0}\}$ in the isomorphism from Proposition \ref{Updown}. We thus have that 
$\operatorname{States}^{-}_{k}=(G\sigma) E_k^{-}$. 
In the next lemma we use the notation $L{\downarrow}$ for the set of elements less than or equal to some element in $L$.

\begin{lem}\label{States.lem}
$\operatorname{States}_{k}^{+}\downarrow$  $=$  $\operatorname{States}_{k}^{-}\downarrow$.
\end{lem}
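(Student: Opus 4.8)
The plan is to show both inclusions $\operatorname{States}_k^+{\downarrow} \subseteq \operatorname{States}_k^-{\downarrow}$ and $\operatorname{States}_k^-{\downarrow} \subseteq \operatorname{States}_k^+{\downarrow}$ by exploiting the isomorphism $\CS(G\times B) \cong Rh_B(G)$ from Proposition \ref{Updown} together with the left $G$-action on $\SP(G\times B)$. For the inclusion $\operatorname{States}_k^-{\downarrow} \subseteq \operatorname{States}_k^+{\downarrow}$: every element of $\operatorname{States}_k^-$ is an invariant cross-section, hence in particular lies in $\SP(G\times B)$; since $\operatorname{States}_k^- = (G\sigma)E_k^-$ and $G\sigma = G\sigma$ (with underlying set $G\times\{b_0\}$) dominates the start state $\sigma = ((1,b_0),\{(1,b_0)\})$ of $\operatorname{States}_k^+$, I would argue that the action of $VF_k(L)V$ over $Rh_B(G)$ is the ``$G$-invariantization'' of the corresponding action over $\SP(G\times B)$, so that each state $(G\sigma)w$ in $\operatorname{States}_k^-$ equals $G\cdot(\sigma w')$ for a suitable word $w'$, and therefore dominates $\sigma w' \in \operatorname{States}_k^+$. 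This gives $\operatorname{States}_k^- \subseteq \operatorname{States}_k^+{\uparrow}$, which upon taking downsets is exactly $\operatorname{States}_k^-{\downarrow} \subseteq \operatorname{States}_k^+{\downarrow}$; the key point needed here is that the start states correspond under the isomorphism (stated in the paragraph before the lemma) and that flows over $Rh_B(G)$ are precisely the $G$-invariant flows over $\SP(G\times B)$ (the Redux remark).

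For the reverse inclusion $\operatorname{States}_k^+{\downarrow} \subseteq \operatorname{States}_k^-{\downarrow}$: given a state $s = (Y,\Pi) \in \operatorname{States}_k^+$, I would show that $Gs$, the $G$-orbit closure under the left action, is a state of $\operatorname{States}_k^-$ (i.e.\ lies in $\CS(G\times B)$ and is reachable by $VF_k^-V$), and that $s \leqslant Gs$ in $\SP(G\times B)$. The first part requires checking that applying $G$ to a cross-section produces an invariant cross-section and that the WFF/standard-interpretation machinery producing $\operatorname{States}_k^+$ has a $G$-equivariant counterpart producing $\operatorname{States}_k^-$ — this is where I would lean on Corollary \ref{gpelt} (so that the $\omega+\star$ operator only needs to be applied to group elements, which behave compatibly with passing to $N_J^{(k)}$) and on the remark that $N_J^{(k)}$ is $G$-invariant. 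Then $s \leqslant Gs$ is immediate since $Y \subseteq GY$ and each $\Pi$-class is contained in its $G$-saturation's restriction. Hence $s \in \operatorname{States}_k^-{\downarrow}$, and taking downsets finishes the inclusion.

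The main obstacle I expect is the bookkeeping that makes ``the action over $Rh_B(G)$ is the $G$-equivariant image of the action over $\SP(G\times B)$'' precise: one must verify that the operators $\overleftarrow{(\,)}$, $(\,)^\star$, $(\,)^{\omega+\star}$ and the Vacuum $V$ all commute with the $G$-invariantization map $F\colon \CS(G\times B)\to Rh_B(G)$ of Proposition \ref{Updown}, and that the modified closure condition (5F) — phrased via the flow normal subgroup spread $N_J^{(k)}$ — transports correctly between the two lattices (this uses that $N_J^{(k)}$ is defined intrinsically in terms of flows and is $G$-invariant by construction). Granting that compatibility, the rest is the two short domination arguments above. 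I would structure the write-up as: (i) recall that the start states correspond under $F$; (ii) establish $G$-equivariance of the generating operators and of $\operatorname{States}_k$; (iii) deduce $\operatorname{States}_k^- = F(\operatorname{States}_k^+\text{-orbits})$ sits above $\operatorname{States}_k^+$ in $\SP(G\times B)$ and conversely; (iv) take downsets.
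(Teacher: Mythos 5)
Your first inclusion, $\operatorname{States}_k^+{\downarrow}\subseteq\operatorname{States}_k^-{\downarrow}$, is essentially the paper's argument: since $\sigma\leqslant G\sigma$ and the generating operators are order-preserving and compatible with the $G$-action, induction on WFFs shows that every element of $\operatorname{States}_k^+$ lies below some element of $\operatorname{States}_k^-$; your ``$s\leqslant Gs$'' packaging of this is fine, granting the equivariance bookkeeping you describe.

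The other inclusion contains a genuine logical error. What you establish is that every state of $\operatorname{States}_k^-$ \emph{dominates} a state of $\operatorname{States}_k^+$, i.e.\ $\operatorname{States}_k^-\subseteq\operatorname{States}_k^+{\uparrow}$, and you then assert that ``taking downsets'' yields $\operatorname{States}_k^-{\downarrow}\subseteq\operatorname{States}_k^+{\downarrow}$. That implication is false: $A\subseteq B{\uparrow}$ says each element of $A$ is \emph{above} something in $B$, whereas $A{\downarrow}\subseteq B{\downarrow}$ requires each element of $A$ to be \emph{below} something in $B$ (take $A$ to be the top of the lattice and $B$ the bottom for a counterexample). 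In particular the observation $G(\sigma w')\geqslant\sigma w'$ points the wrong way. The paper closes this direction by proving the stronger statement $\operatorname{States}_k^-\subseteq\operatorname{States}_k^+$ outright, and the key idea you are missing is how $G\sigma$ itself gets realized as a state of $\operatorname{States}_k^+$: for each $g\in G$ one has $\sigma g^{\omega+\star}=\langle g\rangle\sigma\in\operatorname{States}_k^+$, and since a finite group generated by $g_1,\dots,g_n$ satisfies the regular-expression identity $\{g_1,\dots,g_n\}^\star=(g_1^\star\cdots g_n^\star)^\star$, iterating these $\omega+\star$ operators builds $G\sigma\in\operatorname{States}_k^+$. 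Once $G\sigma\in\operatorname{States}_k^+$, all of $\operatorname{States}_k^-=(G\sigma)E_k^-$ lies inside $\operatorname{States}_k^+$ and the downset inclusion is immediate. Without this construction your proof of that direction does not go through.
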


\begin{proof}
Since $\sigma \leqslant G\sigma$ it follows immediately by induction on WFFs and the definition of $\operatorname{States}$ that every element of 
$\operatorname{States}^{+}_{k}$ is less than or equal to some element of $\operatorname{States}^{-}_{k}$. Therefore 
$\operatorname{States}_{k}^{+}\downarrow \subseteq \operatorname{States}_{k}^{-}\downarrow$.

For the opposite containment we will show that $\operatorname{States}_{k}^{-} \subseteq \operatorname{States}_{k}^{+}$. To do this it is enough to show 
that $G\sigma \in \operatorname{States}_{k}^{+}$.

Let $g \in G$. Then $\sigma g^{\omega+*} =\langle g \rangle \sigma \in \operatorname{States}^{+}_{k}$, where $\langle g\rangle$ is the subgroup of 
$G$ generated by $g$. Every finite group $G$ is the product of its cyclic subgroups in some order and with repeats allowed. Indeed, let $G$ be generated 
by $\{g_{1}, \ldots, g_{n}\}$. The regular expression identity
$\{g_{1}, \ldots , g_{n}\}^{\star} = (g_{1}^{\star}\ldots g_{n}^{\star})^{\star}$ gives this result immediately.
It follows that $G\sigma \in \operatorname{States}^{+}_{k}$ as desired.
\end{proof}

The following corollary follows immediately from Lemma \ref{States.lem}.

\begin{cor} 
\label{prop.eval}
Let $k \geqslant 0$. Then some state of $\operatorname{Eval}_{k}^{+}$ is a contradiction if and only if some state of
$\operatorname{Eval}_{k}^{-}$ is a contradiction.
\end{cor}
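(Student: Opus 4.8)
The plan is to derive the statement directly from Lemma~\ref{States.lem} by unwinding what "some state is a contradiction" means in each of $\operatorname{Eval}_{k}^{+}$ and $\operatorname{Eval}_{k}^{-}$, and then exploit the fact that the property of being a contradiction is monotone in the two lattices involved. Recall that in $\SP(G\times B)$ a state $(Y,\Pi)$ is a contradiction exactly when some $\Pi$-class contains two pairs $(g,b),(h,b)$ with $g\neq h$; in $Rh_{B}(G) \cong \CS(G\times B)$ the corresponding notion is the top element $\Longrightarrow\Longleftarrow$. Under the identification of Proposition~\ref{Updown}, a $G$-invariant element $(Y,\Pi)$ of $\SP(G\times B)$ fails to be an invariant cross-section precisely when it is a contradiction in the set-partition sense. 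So the first step is to observe that "contradiction" is upward closed: if $(Y,\Pi)\leqslant (Z,\Theta)$ in $\SP(G\times B)$ and $(Y,\Pi)$ is a contradiction, then so is $(Z,\Theta)$, since the two offending pairs $(g,b),(h,b)$ lie in a common $\Pi$-class, hence (by the definition of $\leqslant$) in a common $\Theta$-class, and $g\neq h$ still.

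Next I would combine this monotonicity with Lemma~\ref{States.lem}, which asserts $\operatorname{States}_{k}^{+}{\downarrow}=\operatorname{States}_{k}^{-}{\downarrow}$; in fact the proof of that lemma establishes the stronger containments $\operatorname{States}_{k}^{-}\subseteq \operatorname{States}_{k}^{+}$ and that every element of $\operatorname{States}_{k}^{+}$ lies below some element of $\operatorname{States}_{k}^{-}$. Suppose some state $s\in\operatorname{States}_{k}^{-}$ is a contradiction. Since $\operatorname{States}_{k}^{-}\subseteq \operatorname{States}_{k}^{+}$, $s$ is itself a state of $\operatorname{Eval}_{k}^{+}$, and being a contradiction is an intrinsic property of the element (independent of which transformation semigroup acts on it), so $\operatorname{Eval}_{k}^{+}$ has a contradiction state. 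Conversely, suppose some $t\in\operatorname{States}_{k}^{+}$ is a contradiction. By Lemma~\ref{States.lem} there is $s\in\operatorname{States}_{k}^{-}$ with $t\leqslant s$; by the upward closure of "contradiction" just established, $s$ is a contradiction, so $\operatorname{Eval}_{k}^{-}$ has a contradiction state. This gives both directions.

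The one point that needs a little care — and the only place I expect any real friction — is matching up the two different uses of the word "contradiction": the lattice-theoretic top $\Longrightarrow\Longleftarrow$ of $Rh_{B}(G)$ versus the non-cross-section elements of $\SP(G\times B)$. Under Proposition~\ref{Updown} the join operation in $\CS(G\times B)$ returns $\Longrightarrow\Longleftarrow$ exactly when the join in $\SP(G\times B)$ is a contradiction, so a state of $\operatorname{Eval}_{k}^{-}$ "is a contradiction" iff, viewed inside $\SP(G\times B)$ via the embedding $\CS(G\times B)\hookrightarrow\SP(G\times B)$, its value fails the cross-section condition — i.e. it is a contradiction in the $\SP$ sense. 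With that dictionary in place the two notions of contradiction agree on the common elements, and the monotonicity argument goes through verbatim. I would spell out this dictionary in one sentence at the start of the proof and then present the two-line argument above; no computation is required beyond citing Lemma~\ref{States.lem} and the definition of the order on $\SP(G\times B)$.
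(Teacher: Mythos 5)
Your proof is correct and is exactly the intended argument: the paper gives no proof beyond ``follows immediately from Lemma~\ref{States.lem},'' and your write-up supplies precisely the missing details --- the two containments from the proof of that lemma, the upward closure of ``contradiction'' in $\SP(G\times B)$, and the dictionary between the formal top $\Longrightarrow\Longleftarrow$ of $Rh_{B}(G)$ and the non-cross-section elements of $\SP(G\times B)$. No changes needed.
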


The following is~\cite[Proposition 5.16]{HRS.2012}.

\begin{prop}
\label{prop.Mk}
Let $S$ be a $\operatorname{GM}$ semigroup. Then $S$ is a subsemigroup of $\operatorname{Eval}_{k}^{+}(\SP(G\times B))$ for all $k \geqslant 0$.
\end{prop}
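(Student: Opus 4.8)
The plan is to obtain this from the corresponding statement for $\operatorname{Loopeval}_k$, namely the original \cite[Proposition 5.16]{HRS.2012}, together with Proposition \ref{loopsubeval}, after checking that the two modifications made in Section \ref{section.Mk} do not disturb the construction. I would begin by recalling the embedding built in \cite[Section 5]{HRS.2012}. Fix the generating set $X$ of $S$. To each $x\in X$ one associates a distinguished ("basic") element of the flow monoid over $\SP(G\times B)$ obtained from the action of $x$ on $G\times B$; rules (1)--(4) in the definition of the flow monoid --- closure under the basic operators, composition, back-flow, and Kleene star, which are identical for $M_k(L)$ and $F_k(L)$ --- guarantee that the element of $VF_k(\SP(G\times B))V$ assigned to a word $w\in X^{*}$ depends only on the word, so this assignment descends to a morphism $\theta\colon S\to E_k^{+}$. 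The start state $\sigma=((1,b_0),\{(1,b_0)\})$ and its orbit under the $E_k^{+}$-action carry a faithful copy of $(G\times B,S)$: the reachable states record, through their underlying subsets of $G\times B$, exactly the partial transformations that elements of $S$ induce on $G\times B$. Since $S$ is $\operatorname{GM}$ it acts faithfully on $G\times B$, so $\theta$ is injective and $S$ is a subsemigroup of $\operatorname{Eval}_k^{+}$.

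What must be verified beyond the bare citation of \cite[Proposition 5.16]{HRS.2012} are the two modifications. First, the replacement of $M_k(L)$ by $F_k(L)$ only changes the rule governing the $\omega+\star$-operator: by Corollary \ref{gpelt} one only ever needs to apply $\omega+\star$ to elements lying in subgroups of the flow monoid, and, exactly as in the proof of Proposition \ref{loopsubeval}, each such group element is pointlike for $C_k$ in its image and hence lies in the $k$-th flow normal subgroup spread by Theorem \ref{maxpl}; thus precisely the applications of $\omega+\star$ used in the HRS embedding are sanctioned by rule (5F). Second, removing the downward closure of the state set (paragraph (3) of \cite[Definition 5.3]{HRS.2012}) does not affect the argument, because the image of $S$ under $\theta$ acts through the orbit of $\sigma$, which consists solely of interpretations of well-formed formulae and therefore already lies in the smaller set $\operatorname{States}_k^{+}$.

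Putting these together, the embedding of \cite[Section 5]{HRS.2012} realizes $S$ as a subsemigroup of the action semigroup $E_k^{+}$ of $\operatorname{Eval}_k^{+}(\SP(G\times B))$; alternatively one may first invoke \cite[Proposition 5.16]{HRS.2012} to embed $S$ into $\operatorname{Loopeval}_k^{+}(\SP(G\times B))$ and then use Proposition \ref{loopsubeval} to include the latter into $\operatorname{Eval}_k^{+}(\SP(G\times B))$. Either way the assertion holds for every $k\geqslant 0$. The main obstacle, and the only place requiring genuine care rather than transcription from \cite{HRS.2012}, is the verification in the second paragraph that no application of the $\omega+\star$-operator needed for the embedding is lost in passing from rule (5) to rule (5F); once that is granted, the remainder is routine given the definitions recalled above.
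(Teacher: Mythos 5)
Your proposal is correct and follows essentially the same route as the paper, which simply quotes \cite[Proposition 5.16]{HRS.2012} for this statement (with the inclusion $\operatorname{Loopeval}_k \subseteq \operatorname{Eval}_k$ already supplied by Proposition \ref{loopsubeval}); your second, shorter argument is exactly that. Your additional verification that rule (5F) and the trimmed state set do not disturb the HRS embedding is a careful elaboration of what the paper leaves implicit, not a different approach.
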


We now claim that the lower bound of~\cite[Section 5]{HRS.2012} holds by replacing $\operatorname{Loopeval}_{k}(L)$ with $\operatorname{Eval}_{k}(L)$. 
This follows since Proposition \ref{loopsubeval} implies that if $\operatorname{Eval}_{k-1}(L)$ has no contradiction, then neither does 
$\operatorname{Loopeval}_{k-1}(L)$

\begin{thm}\label{lb}
(Lower bounds) If $\operatorname{RLM}(S)c=k$ and some state of $\operatorname{Eval}_{k-1}(L)$ is a contradiction, then $Sc=k+1$.
\end{thm}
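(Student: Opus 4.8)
\textbf{Proof proposal for Theorem~\ref{lb}.}
The plan is to reduce this to the corresponding lower bound of~\cite[Section~5]{HRS.2012}, which was stated for $\operatorname{Loopeval}_{k-1}(L)$, and then transfer it to $\operatorname{Eval}_{k-1}(L)$ using Proposition~\ref{loopsubeval}. First I would recall the structure of the original argument: the key point is that if $\operatorname{RLM}(S)c = k$, then by Theorem~\ref{PLflow} (together with the Fundamental Lemma of Complexity), $Sc$ is either $k$ or $k+1$, with $Sc = k$ precisely when there is an automaton $\mathcal A$ whose transformation semigroup has complexity at most $k-1$ admitting a complete flow $f \colon Q \to Rh_B(G)$. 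So it suffices to show that the existence of such a flow forces $\operatorname{Eval}_{k-1}(L)$ to be contradiction-free; contrapositively, a contradiction in $\operatorname{Eval}_{k-1}(L)$ rules out any complexity-$(k-1)$ flow and hence $Sc = k+1$.

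Next I would argue that a complete flow of complexity at most $k-1$ gives rise to a ``consistent evaluation'' of all well-formed formulae into $L = \SP(G \times B)$ (or $Rh_B(G)$, via Corollary~\ref{prop.eval}) in which no contradiction is ever produced. This is the heart of the matter and is exactly the content of the analysis in~\cite[Section~5]{HRS.2012}: the flow provides, for each WFF, a stable pair in $L^2$ that is compatible with the flow transitions, and the $\omega+\star$ closures appearing in the evaluation correspond to looping in the flow automaton, which by hypothesis has complexity $\leq k-1$. The only modification needed is to check that the revised closure rule (5F) — where we apply $\omega + \star$ to any element lying in the maximal $C_k$-pointlike subgroup $N_J^{(k)}$ rather than merely to $k$-loopable elements — does not introduce any new contradictions in the presence of a genuine complexity-$(k-1)$ flow. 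But this is precisely guaranteed by Theorem~\ref{maxpl}: in the action semigroup coming from an actual flow of complexity $\leq k-1$, every element of $N_J^{(k)}$ is pointlike for $C_k$ and hence collapses consistently, so the $\omega+\star$ operation on such elements is exactly what the flow already does. Thus the evaluation built from $F_{k-1}(L)$ is no less ``forgiving'' than the one built from $M_{k-1}(L)$, and Proposition~\ref{loopsubeval} tells us $\operatorname{Loopeval}_{k-1}(L)$ sits inside $\operatorname{Eval}_{k-1}(L)$ so any contradiction in the former already appears in the latter.

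Finally I would assemble the pieces: suppose some state of $\operatorname{Eval}_{k-1}(L)$ is a contradiction. By the transfer argument and Corollary~\ref{prop.eval} we may work with either lattice $L \in \{\SP(G\times B), Rh_B(G)\}$. By Proposition~\ref{prop.Mk}, $S$ embeds into $\operatorname{Eval}_{k-1}^+(\SP(G\times B))$, so the contradiction is visible from $S$ itself; this is exactly the obstruction that prevents the existence of a complete flow into $Rh_B(G)$ from an automaton of complexity $\leq k-2$ (the level-$(k-1)$ analogue of the criterion in Theorem~\ref{PLflow}). Hence no such flow exists, so by Theorem~\ref{PLflow} applied at level $k$ we cannot have $Sc = k$, and since $\operatorname{RLM}(S)c = k$ forces $Sc \in \{k, k+1\}$, we conclude $Sc = k+1$.

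\textbf{Main obstacle.} The delicate step is the second paragraph: verifying that replacing the $k$-loopable closure rule~(5) by the pointlike-subgroup rule~(5F) is \emph{sound}, i.e.\ that it never manufactures a contradiction that a real complexity-$(k-1)$ flow would avoid. This requires carefully matching the $\omega+\star$ closures in the evaluation against the loop structure of the flow automaton and invoking Theorem~\ref{maxpl} to identify $N_J^{(k)}$ with the maximal $C_k$-pointlike subgroup; one must ensure the inductive bookkeeping on WFFs in~\cite[Section~5]{HRS.2012} goes through verbatim with rule~(5F) in place of rule~(5). Everything else is a direct citation of the Presentation Lemma and the earlier results of this section.
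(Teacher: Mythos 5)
Your proposal takes essentially the same route as the paper: the paper's entire justification is the remark preceding the theorem, namely that the lower-bound argument of \cite[Section 5]{HRS.2012} carries over with $\operatorname{Loopeval}_{k-1}(L)$ replaced by $\operatorname{Eval}_{k-1}(L)$, the only new ingredient being that rule (5F) is sound against an actual complexity-$(k-1)$ flow because $N_J^{(k-1)}$ is the maximal $C_{k-1}$-pointlike subgroup (Theorem~\ref{maxpl}) and is therefore collapsed by the associated relational morphism --- which is precisely the step you isolate as the crux. The one slip is the parenthetical ``complexity $\leqslant k-2$'' near the end, which should read $\leqslant k-1$ to match the indexing of $\operatorname{Eval}_{k-1}$ and your own (correct) statement of the flow criterion earlier in the proposal.
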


\section{Decidability of complexity for all finite semigroups and automata}
\label{section.7}

We now formulate our condition of deciding whether a semigroup has complexity $k$, which states that the lower bound of Theorem \ref{lb} computes 
complexity exactly. Namely, let $k \geqslant 1$.  Let $S$ be a $\operatorname{GM}$ semigroup with $\operatorname{RLM}(S)c=k$. Then our condition 
says that $Sc = k$ if and only if no state of $\operatorname{Eval}_{k-1}^{-}$ is a contradiction. 
We remark by Proposition~\ref{prop.eval} this is equivalent to replacing $\operatorname{Eval}_{k-1}^{-}$ with $\operatorname{Eval}_{k-1}^{+}$ and 
demanding no contradictions for states of this transformation semigroup.

We formulate our main theorem as follows.

\begin{thm}
\label{mainthm}
Let $S$ be a $\GM$ semigroup with corresponding transformation semigroup $(G \times B,S)$. Then there is a transformation semigroup $(Q,T)$ with 
$Tc \leqslant k-1$ and a flow $Q \rightarrow Rh_{B}(G)$ if and only if $\operatorname{Eval}_{k-1}(Rh_{B}(G))$ has no contradiction. Consequently, 
$Sc\leqslant k$ if and only if $\RLM(S)c \leqslant k$ and $\operatorname{Eval}_{k-1}(Rh_{B}(G))$ has no contradiction.
\end{thm}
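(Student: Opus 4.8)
The plan is to prove the two directions of the flow $\Leftrightarrow$ no-contradiction equivalence separately, and then deduce the complexity statement from Corollary~\ref{PLdecomp} and Theorem~\ref{lb}. For the ``only if'' direction, suppose there is a transformation semigroup $(Q,T)$ with $Tc \leqslant k-1$ and a complete flow $f \colon Q \rightarrow Rh_B(G)$. I would argue that the existence of such a flow forces all states of $\operatorname{Eval}_{k-1}(Rh_B(G))$ to be genuine cross-sections (i.e.\ non-contradictions). The idea is that $\operatorname{Eval}_{k-1}(Rh_B(G))$ is an \emph{intrinsic} object built from the lattice $Rh_B(G)$ and the operators on $\mathcal{C}(L^2)$, in particular the operator (5F) which is only applied to elements lying in the pointlike subgroup $N_J^{(k-1)}$ for $C_{k-1}$; by Theorem~\ref{maxpl} this subgroup is exactly the maximal subgroup that \emph{must} collapse under any relational morphism into $C_{k-1}$. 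A flow of complexity $\leqslant k-1$ witnesses that $N_J^{(k)}$ is trivial, and more precisely provides, via the standard interpretation of WFFs along the automaton $\mathcal{A}$, an assignment of cross-sections in $Rh_B(G)$ to all the elements of $\operatorname{States}_{k-1}^-$ that is respected by the action of $E_{k-1}^-$. One then checks by induction on the structure of WFFs (the same induction that defines $\operatorname{States}_{k-1}^-$ in \cite[Section 5]{HRS.2012}) that every state stays a cross-section, using that the flow conditions (1)--(3) in the definition of flow are precisely what guarantees the inductive step survives, and that the $\omega+\star$ operator applied to a pointlike-for-$C_{k-1}$ element is realized inside the flow because that subgroup is trivial relative to the flow. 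Hence no state of $\operatorname{Eval}_{k-1}^-$ is a contradiction.

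For the ``if'' direction — the substantive new content — I would assume $\operatorname{Eval}_{k-1}(Rh_B(G))$ has no contradiction and construct a flow. The transformation semigroup $\operatorname{Eval}_{k-1}^-$ itself is the candidate: by Proposition~\ref{prop.Mk} (and its $Rh_B(G)$ analogue obtained via Proposition~\ref{Updown} and Corollary~\ref{prop.eval}) the $\GM$ semigroup $S$ embeds in $\operatorname{Eval}_{k-1}^{\pm}$, and the standard interpretation of WFFs gives a canonical map from an $X$-automaton $\mathcal{A}$ (whose transformation semigroup is the action monoid $E_{k-1}^-$) into the state set $\operatorname{States}_{k-1}^-$. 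Define $f(q)$ to be the state reached by reading the WFF labeling $q$ from the start state $G\sigma$; the no-contradiction hypothesis says $f(q) \in \CS(G\times B) = Rh_B(G)$ for all $q$, so the Cross Section Condition holds, and conditions (1)--(2) in the definition of flow follow from the defining properties of closure operators in $F_{k-1}(L)$ acting on $\operatorname{States}_{k-1}^-$ — this is essentially the verification already carried out in \cite[Section 5]{HRS.2012} for $\operatorname{Loopeval}$, which transfers because $\operatorname{Loopeval}_{k-1}(L) \prec \operatorname{Eval}_{k-1}(L)$ by Proposition~\ref{loopsubeval}. The key point is then that $\operatorname{Eval}_{k-1}(L)$ has complexity $\leqslant k-1$: this is where part (5F) does its work, since it only introduces $\omega+\star$ of elements in the $C_{k-1}$-pointlike subgroup, which by the inductive hypothesis of the overall decidability theorem (Theorem~\ref{compdec} and its higher analogues) and Theorem~\ref{maxpl} are exactly the elements forced to be trivial in \emph{some} semigroup of complexity $\leqslant k-1$ — so the action monoid $E_{k-1}^-$ maps into a semigroup of complexity $\leqslant k-1$, giving the automaton $\mathcal{A}$ of Theorem~\ref{PLflow}.

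Finally, for the ``Consequently'' clause: combining the equivalence just proved with Theorem~\ref{PLflow}/Corollary~\ref{PLdecomp}, if $\RLM(S)c \leqslant k$ and $\operatorname{Eval}_{k-1}(Rh_B(G))$ has no contradiction, then there is $T$ with $Tc\leqslant k-1$ and $(G\times B,S) \prec (G\wr \mathsf{Sym}(B)\wr T)\times(B,\RLM(S))$, whence $Sc\leqslant k$. Conversely if $Sc \leqslant k$ then $\RLM(S)c \leqslant k$ (since $\RLM(S)$ is a quotient of $S$ and complexity is monotone under division), and a flow of complexity $\leqslant k-1$ exists by Theorem~\ref{PLflow}, so by the ``only if'' direction $\operatorname{Eval}_{k-1}(Rh_B(G))$ has no contradiction — alternatively, by contrapositive via Theorem~\ref{lb}: if a state of $\operatorname{Eval}_{k-1}(Rh_B(G))$ were a contradiction while $\RLM(S)c=k$, then $Sc = k+1$, contradicting $Sc\leqslant k$. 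The equivalence of the $Rh_B(G)$ and $\SP(G\times B)$ formulations is Corollary~\ref{prop.eval}.

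\textbf{Main obstacle.} The hard part is proving that $\operatorname{Eval}_{k-1}(Rh_B(G))$ genuinely has complexity at most $k-1$, i.e.\ that the modified closure (5F) — which appeals to the flow spread $N_J^{(k-1)}$ of the monoid \emph{being constructed} — is well-founded and does not secretly raise complexity. This is the circularity inherent in the definition of $F_{k-1}(L)$, and resolving it requires the inductive structure: one must show that the maximal $C_{k-1}$-pointlike subgroup in the action semigroup is both computable (Proposition~\ref{njkdecidable}) and correctly captured by the flow-spread machinery of Theorem~\ref{maxpl}, so that quotienting by it lands inside $C_{k-1}$ by the induction hypothesis on $k$ together with Lemma~\ref{PLreduction}.
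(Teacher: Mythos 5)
Your treatment of the necessary direction and of the ``Consequently'' clause matches the paper: the paper records the necessary direction as exactly the improved lower bound (Theorem~\ref{lb}, restated as Theorem~\ref{necmainthm}), and the complexity statement is then extracted from the Presentation Lemma Flow Decomposition Theorem just as you describe. The problem is in the sufficient direction, which is where all the content of the theorem lives.

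There is a genuine gap in your ``if'' direction. You propose to take the flow automaton to be $\operatorname{Eval}_{k-1}^{-}$ itself and you assert that its action semigroup $E_{k-1}^{-}$ has complexity at most $k-1$ because clause (5F) only applies $\omega+\star$ to $C_{k-1}$-pointlike elements, ``so the action monoid $E_{k-1}^{-}$ maps into a semigroup of complexity $\leqslant k-1$.'' A relational morphism into a member of $C_{k-1}$ does not place a semigroup in $C_{k-1}$ (every semigroup maps onto the trivial semigroup), and the claim itself cannot be expected to hold: by Proposition~\ref{prop.Mk} the $\operatorname{GM}$ semigroup $S$ embeds in $\operatorname{Eval}_{k-1}^{+}$, so when $Sc=k$ the evaluation semigroup has complexity at least $k$, not $k-1$. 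You correctly flag this as ``the main obstacle,'' but the resolution you sketch is not a proof and is not how the paper proceeds. The paper instead constructs an explicit relational morphism $r\colon(\operatorname{Eval}_{k-1}(L),Z)\to(W_{k-1},Z)$ with $W_{k-1}c\leqslant k-1$ and $er^{-1}\cap G_{J}=N_{J}^{(k-1)}$ for every regular $\mathcal{J}$-class $J$ and idempotent $e$ (computable by Proposition~\ref{njkdecidable} and Theorem~\ref{maxpl}), passes to the geometric semigroup expansion $\varphi\colon\operatorname{GST}(\#r,Z)\onto\operatorname{GST}(W_{k-1},Z)$, and then proves the key Lemma~\ref{lemma.main}: the \emph{derived} transformation semigroup $D(\varphi)$ has complexity at most $1$. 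That lemma is established by inducting along the chain of $\operatorname{GM}$ images of $D(\varphi)$ and, for each such image, verifying the complexity-$1$ criterion of~\cite{MRS.2023} (no contradiction in $\operatorname{Eval}_{0}$), which is transferred from the hypothesis that $\operatorname{Eval}_{k-1}$ has no contradiction by pulling back generators and substituting them into Well-Formed Formulae. The complexity accounting is then $1+(k-1)=k$ via the Derived Semigroup Theorem, with the flow living over $W_{k-1}$; at no point is $\operatorname{Eval}_{k-1}$ itself claimed to lie in $C_{k-1}$. Without something playing the role of Lemma~\ref{lemma.main} and the invocation of the full complexity-$1$ decidability machinery, your argument does not close.
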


The reader should pay attention to the subscripts. The new lower bound for all $c=k$ is the minimal $k-1$, so that $\operatorname{Eval}_{k-1}$  
has no contradiction. 

The case of complexity 0 is trivial. The main theorem of~\cite{MRS.2023} shows that this condition is correct for the case $k=1$. We recall the result here.

\begin{thm}
\label{mainc=1}
Let $S$ be a $\GM$ semigroup with corresponding transformation semigroup $(G \times B,S)$. Then there is an aperiodic transformation semigroup 
$(Q,T)$ and a flow $Q \rightarrow Rh_{B}(G)$ if and only if $\operatorname{Eval}_{0}(Rh_{B}(G))$ has no contradiction. Consequently, $Sc = 1$ 
if and only if $\RLM(S)c \leqslant 1$ and $\operatorname{Eval}_{0}(Rh_{B}(G))$ has no contradiction.
\end{thm}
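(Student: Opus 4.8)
The plan is to establish the first biconditional (existence of a flow over an aperiodic $(Q,T)$ if and only if $\operatorname{Eval}_0(Rh_B(G))$ has no contradiction) and then derive the ``consequently'' clause formally. For that derivation, note that since $(G\times B,S)$ is $\GM$ the group $G$ is nontrivial, so $Sc \geqslant 1$; moreover a flow over an aperiodic $(Q,T)$ yields, by Theorem~\ref{PLflow}, the division $(G \times B,S) \prec (G \wr \mathsf{Sym}(B) \wr T) \times (B,\RLM(S))$, whence $Sc \leqslant 1 + Tc = 1$. Combining this with Theorem~\ref{PLflow} in the case $k=1$ gives, under $\RLM(S)c = 1$, the equivalence $Sc = 1 \Leftrightarrow$ an aperiodic flow exists, and the first biconditional then trades the flow condition for the contradiction-free condition on $\operatorname{Eval}_0$. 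The boundary case $\RLM(S)c = 0$ is immediate, since then $\RLM(S) = S^{\mathcal{L}'}$ is aperiodic, forcing $Sc = 1$ with $T$ trivial.

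For the ``only if'' direction of the first biconditional I would argue contrapositively. Suppose some state of $\operatorname{Eval}_0(Rh_B(G)) = \operatorname{Eval}_0^-$ is a contradiction; by Corollary~\ref{prop.eval} the same holds for $\operatorname{Eval}_0^+$. Since $\operatorname{Loopeval}_0(L)$ embeds in $\operatorname{Eval}_0(L)$ by Proposition~\ref{loopsubeval}, this contradiction is already visible to the sharper lower bound, so Theorem~\ref{lb} forces $Sc \geqslant 2$. But a flow over an aperiodic $T$ would give $Sc = 1$ as above, a contradiction. Hence the existence of a flow over an aperiodic $T$ precludes any contradiction in $\operatorname{Eval}_0$.

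The ``if'' direction is the heart of the matter, and I would prove it by building the flow directly out of the evaluation transformation semigroup. I would take the automaton whose state set is $\operatorname{States}_0^-$ (passing freely between the $\pm$ versions via Lemma~\ref{States.lem} and Corollary~\ref{prop.eval}), whose letters act through the image of $VF_0(Rh_B(G))V$, and whose transition semigroup is $E_0^-$; the candidate flow $f$ is the tautological map sending each state, an element of $\CS(G\times B) \cong Rh_B(G)$, to itself. The hypothesis that $\operatorname{Eval}_0$ has no contradiction says exactly that every reachable state is a genuine cross-section, i.e.\ lies in $\CS(G\times B)$, so the Cross Section Condition of a flow holds; conditions (1) and (2) of a flow (the inclusion $Yx \subseteq Z$ and the induced partial injection on partition classes) are built into the flow-monoid axioms defining $F_0(L)$ and the action on $\operatorname{States}_0$. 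Completeness follows from Proposition~\ref{prop.Mk}, which embeds $S$ into $\operatorname{Eval}_0^+$ and thereby guarantees every $(g,b)\in G\times B$ is covered by some state.

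The one step that is not formal, and which I expect to be the main obstacle, is showing that the transition semigroup $E_0^-$ of $\operatorname{Eval}_0$ is aperiodic, i.e.\ that the semigroup $T$ it provides satisfies $Tc = 0$. By Corollary~\ref{gpelt} the $\omega+\star$ operator in clause~(5F) is applied only to genuine group elements of $F_0(Rh_B(G))$, and for $k=0$ clause~(5F), via Theorem~\ref{maxpl}, restricts these to elements pointlike with respect to $C_0$, the pseudovariety of aperiodic semigroups. One must then prove that iterating $\omega+\star$ on such pointlike elements never manufactures a nontrivial subgroup in the faithful action on $\operatorname{States}_0$. This is precisely where the profinite machinery and the analysis of free Burnside semigroups of McCammond and others enter: the $\omega+\star$ collapse of a pointlike group element must be shown to behave as if the group were trivial, so that the resulting action semigroup carries no nontrivial subgroups and is aperiodic. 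Establishing this aperiodicity is the technical core on which the whole equivalence rests.
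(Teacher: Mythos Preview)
Your ``only if'' direction is fine and is essentially Theorem~\ref{necmainthm} (equivalently the lower bound Theorem~\ref{lb}); note also that $\operatorname{Loopeval}_0 = \operatorname{Eval}_0$ outright, as the paper observes just after the statement, so the embedding argument via Proposition~\ref{loopsubeval} is unnecessary.

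The ``if'' direction, however, has a genuine gap, and it is not where you think. You propose to take $(Q,T) = (\operatorname{States}_0^-, E_0^-)$ with the tautological flow, and identify the obstacle as proving $E_0^-$ aperiodic. But $E_0^-$ is \emph{not} aperiodic: by Proposition~\ref{prop.Mk} the $\GM$ semigroup $S$ itself sits inside $\operatorname{Eval}_0^+$, and $S$ has the nontrivial group $G$ in its distinguished ideal. Your reading of clause~(5F) at level $k=0$ is also off: by the paper's convention $N_J^{(0)} = G_J$ is the \emph{entire} maximal subgroup, so $(5F)$ imposes no restriction whatsoever (this is exactly why $M_0(L)=F_0(L)$); Theorem~\ref{maxpl} concerns $N_J^{(k)}$ for $k>0$ and does not say that $N_J^{(0)}$ consists of $C_0$-pointlikes. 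So the ``technical core'' you isolate is a false target.

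The paper does not prove Theorem~\ref{mainc=1} here; it is quoted from \cite{MRS.2023}. The construction there does not use $\operatorname{Eval}_0$ as the flow automaton. Rather, one must produce a \emph{different} aperiodic $(Q,T)$---and the substance of \cite{MRS.2023} is to bound the size of such a $T$ by a computable function of $|S|$, using the geometric semigroup expansion together with the profinite and free-Burnside-semigroup machinery you mention. The role of ``no contradiction in $\operatorname{Eval}_0$'' is not that $\operatorname{Eval}_0$ is itself aperiodic, but that its states furnish the cross-section data from which an aperiodic flow can be assembled after passing through these expansions. In short, the tautological flow on $\operatorname{Eval}_0$ satisfies the cross-section condition but over the wrong semigroup; the work is in replacing that semigroup, not in arguing it was aperiodic all along.
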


Note that $\operatorname{Loopeval_{0}(L)} = \operatorname{Eval}_{0}(L)$ since we are allowed to apply the $\omega+*$ operator to every element 
and thus $M_{0}(L)=F_{0}(L)$.

Unlike for the complexity $1$ case treated in~\cite{MRS.2023},
the bound using $\operatorname{Loopeval}_{k}$ of~\cite{HRS.2012} is probably not sharp for complexity greater than~$1$. For example, we conjecture 
that~\cite[Conjecture 1.9 and Definition 5.9]{Rhodes.1977} gives a semigroup $S$ with $Sc = 3$, 
but for which the lower bound of~\cite{HRS.2012} is $2$. 
See also~\cite{RS.2006}. This is because the notion of 
$k$-loopable in~\cite{HRS.2012} is probably too weak. We strengthened the notion of $k$-loopable 
in this paper precisely because of this problem. In a future paper we prove that the lower bound based on $k$-loopable elements is a proper lower bound.

The proof of the main theorem of this paper uses the decidability of complexity $1$ of~\cite{MRS.2023} and follows the proof scheme of~\cite{HRS.2012} with our improved lower bound.

\subsection{Proof of decidability}
\label{section.proof}

In this subsection we prove Theorem~\ref{mainthm}. We first note that our improved lower bound, Theorem~\ref{lb} is precisely the necessary 
direction of our main theorem. We record this here.

\begin{thm}
\label{necmainthm}
Let $S$ be a $\GM$ semigroup with corresponding transformation semigroup $(G \times B,S)$. If there is a transformation semigroup $(Q,T)$ 
with $Tc \leqslant k-1$ and a flow $Q \rightarrow Rh_{B}(G)$, then $\operatorname{Eval}_{k-1}(Rh_{B}(G))$ has no contradiction. Consequently 
if $\RLM(S)c=k$, then $Sc=k$.
\end{thm}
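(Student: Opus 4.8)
The statement is Theorem~\ref{necmainthm}, which asserts that if $(G\times B,S)$ admits a flow to $Rh_B(G)$ via a transformation semigroup of complexity $\leqslant k-1$, then $\operatorname{Eval}_{k-1}(Rh_B(G))$ has no contradiction, and hence (combined with $\RLM(S)c = k$) $Sc = k$. The plan is to observe that this is essentially the contrapositive of the lower bound Theorem~\ref{lb}, so the main work is to verify that the hypotheses line up. First I would assume for contradiction that some state of $\operatorname{Eval}_{k-1}(Rh_B(G)) = \operatorname{Eval}_{k-1}^{-}$ is a contradiction. By Corollary~\ref{prop.eval}, some state of $\operatorname{Eval}_{k-1}^{+}$ is then also a contradiction, so $\operatorname{Eval}_{k-1}(L)$ has a contradiction for $L = \SP(G\times B)$ as well. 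Then Theorem~\ref{lb}, applied under the hypothesis $\RLM(S)c = k$, would force $Sc = k+1$.

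Next I would derive a contradiction with the assumed existence of the flow. By Theorem~\ref{PLflow} (the Presentation Lemma--Flow Version), or equivalently Corollary~\ref{PLdecomp}, the existence of an $X$-automaton $\mathcal{A}$ whose transformation semigroup $T$ satisfies $Tc \leqslant k-1$ together with a complete flow $f\colon Q \to Rh_B(G)$ implies
\[
	(G\times B,S) \prec (G \wr \mathsf{Sym}(B) \wr T) \times (B,\RLM(S)).
\]
Since $G \wr \mathsf{Sym}(B)$ is a group and $\mathsf{Sym}(B)$ has complexity $0$, the resolution semigroup $\Res = G \wr \mathsf{Sym}(B) \wr T$ has complexity at most $1 + (k-1) = k$; moreover the Fundamental Lemma of Complexity and the behavior of complexity under wreath products give $\Res c \leqslant k$, and of course $(B,\RLM(S))c = \RLM(S)c = k$. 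Hence the right-hand side has complexity $\leqslant k$, and division does not increase complexity, so $Sc \leqslant k$. This directly contradicts $Sc = k+1$ obtained in the previous paragraph. Therefore no state of $\operatorname{Eval}_{k-1}(Rh_B(G))$ is a contradiction.

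For the final clause, assume in addition $\RLM(S)c = k$. Having just shown $\operatorname{Eval}_{k-1}(Rh_B(G))$ has no contradiction is consistent, but we want $Sc = k$. The division above already yields $Sc \leqslant k$. For the reverse inequality, one uses that complexity is monotone under taking $\RLM$ images in the $\GM$ setting: since $\RLM(S)$ is (a divisor related to) $S^{\mathcal{L}'}$ and in general $Sc \geqslant \RLM(S)c$ — this follows because $S$ maps onto a semigroup built from $\RLM(S)$, or more directly from the fact that in Corollary~\ref{PLdecomp} the condition $\RLM(S)c \leqslant k$ is necessary for $Sc \leqslant k$ — we get $Sc \geqslant \RLM(S)c = k$. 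Combining, $Sc = k$.

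The step I expect to be the genuine obstacle, as opposed to bookkeeping, is making the reduction to Theorem~\ref{lb} airtight: Theorem~\ref{lb} is stated for a generic lattice $L$ and refers to $\operatorname{Eval}_{k-1}(L)$, and I must be careful that the flow existing over $Rh_B(G)$ is exactly the negation-side statement matched by the contradiction-freeness of $\operatorname{Eval}_{k-1}^{-}$, using Corollary~\ref{prop.eval} to pass between the $+$ and $-$ versions, and that the indexing of subscripts (flow of complexity $\leqslant k-1$ corresponds to $\operatorname{Eval}_{k-1}$, not $\operatorname{Eval}_{k}$) is consistent throughout. Once the subscripts and the $\pm$ translation are pinned down, the rest is an application of already-established wreath-product complexity bounds and the Presentation Lemma.
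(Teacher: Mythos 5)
Your proposal is correct in substance and follows essentially the same route as the paper: the paper offers no separate argument for Theorem~\ref{necmainthm} beyond observing that it is precisely the necessary direction of the lower bound, Theorem~\ref{lb}, combined with the Presentation Lemma (Theorem~\ref{PLflow}/Corollary~\ref{PLdecomp}) and the passage between $\operatorname{Eval}^{-}$ and $\operatorname{Eval}^{+}$ via Corollary~\ref{prop.eval} --- which is exactly the reduction you carry out. One caveat on scope: your proof of the first clause runs by contradiction through the complexity values ($Sc=k+1$ versus $Sc\leqslant k$), and both halves of that dichotomy --- invoking Theorem~\ref{lb}, and extracting $Sc\leqslant k$ from the flow via the decomposition --- require the hypothesis $\RLM(S)c=k$ (respectively $\leqslant k$), whereas the first clause of Theorem~\ref{necmainthm} carries no hypothesis on $\RLM(S)$. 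The underlying content of the lower bound inherited from~\cite{HRS.2012} (as adapted here via Proposition~\ref{loopsubeval}) is the direct implication that a flow from a transformation semigroup of complexity at most $k-1$ propagates through the states of $\operatorname{Eval}_{k-1}$ without ever producing a contradiction; read that way, the first clause holds unconditionally and your detour through $Sc$ is only needed for the final sentence of the statement, where $\RLM(S)c=k$ is assumed anyway. Also note the small slip that $\mathsf{Sym}(B)$ is a nontrivial group (complexity $1$, not $0$) when $|B|\geqslant 2$; this is harmless since you correctly treat $G\wr\mathsf{Sym}(B)$ as a single group contributing one group level, giving $(G\wr\mathsf{Sym}(B)\wr T)c\leqslant 1+(k-1)=k$.
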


We begin the proof in the sufficient direction. By induction we assume that the sufficient condition in Theorem \ref{mainthm} has been proved for $k-1$, where $k>0$. We note that Theorem \ref{mainc=1} is the case $k=1$. 

\begin{prop}
\label{njkdecidable}
Let $S$ be a $\GM$ semigroup. Then for all $k\geqslant 0$ membership in the subgroup 
$N_{J}^{(k)}$ where $J$ is a $\mathcal{J}$-class of the semigroup of $\operatorname{Eval}_{k}(L)$ is decidable. 
\end{prop}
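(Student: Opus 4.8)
The plan is to prove decidability of membership in $N_J^{(k)}$ by induction on $k$, leveraging the characterization established in Theorem~\ref{maxpl} that $N_J^{(k)}$ is the maximal subgroup of $G_J$ that is pointlike with respect to the pseudovariety $C_k$, together with Lemma~\ref{PLreduction} which reduces this to computing pointlikes with respect to $Gp * C_{k-1}$. The base case $k=0$ is immediate since $N_J^{(0)} = G_J$ by convention, so every element of $G_J$ is a member and membership is trivially decidable (and $C_0 = Ap$, for which pointlike computation is classical). For the inductive step, assume membership in $N_J^{(k-1)}$ is decidable for the relevant lattice constructions; equivalently, assume that complexity at most $k-1$ is decidable, which is exactly the inductive hypothesis already in force for the proof of Theorem~\ref{mainthm} (the excerpt states ``By induction we assume that the sufficient condition in Theorem~\ref{mainthm} has been proved for $k-1$'').

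The key steps, in order, would be: (1) By Theorem~\ref{maxpl}, $N_J^{(k)}$ equals the maximal subgroup of $G_J$ pointlike for $C_k$ in the semigroup of $\operatorname{Eval}_k(L)$. (2) By Lemma~\ref{PLreduction}, a subgroup $N$ is in $\Pl_{C_k}(S) = \Pl_{Ap * Gp * C_{k-1}}(S)$ if and only if $N \in \Pl_{Gp * C_{k-1}}(S)$, so it suffices to decide membership of group elements in the maximal pointlike subgroup for $Gp * C_{k-1}$. (3) By the characterization via the derived transformation semigroup (Theorem~\ref{DST} and the admissible/cross-section congruence apparatus, i.e.~\cite[Theorem~4.14.20]{RS.2009}), $N_J^{(k)}$ is the intersection over all relational morphisms $\varphi$ to semigroups of complexity at most $k-1$ of the subgroups $N(\varphi)$; and by the compactness theorem stated in the excerpt, this intersection is realized by a single relational morphism $\varphi_k \colon (G \times B, S) \to (Q_k, T_k)$ with $T_k c \leqslant k-1$ and $N(\varphi_k) = N_J^{(k)}$. (4) The computation of $N(\varphi_k)$ reduces to computing the minimal injective congruence on the derived transformation semigroup $D(\varphi_k)$ — an effective construction once $\varphi_k$ is in hand — and determining the maximal subgroup of $G$ contained in one of its classes; this is a finite computation. (5) The only nonconstructive-looking ingredient is the passage from ``there exists $T_k \in C_{k-1}$'' to an explicit bound on $T_k$; here one invokes the inductive hypothesis that complexity $k-1$ is decidable (Theorem~\ref{mainthm} for $k-1$, with base case Theorem~\ref{mainc=1}), which — exactly as in the complexity-$1$ case of~\cite{MRS.2023} — yields a computable bound on the size of the semigroup needed, so that the search over relational morphisms $\varphi$ to semigroups in $C_{k-1}$ is finite.

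The main obstacle, as in the $k=1$ case treated in~\cite{MRS.2023}, is step (5): a priori the family of relational morphisms to $C_{k-1}$ is infinite, and one must extract an effective bound on the size of the target semigroup $T_k$ (equivalently, on $D(\varphi_k)$) beyond which no new identifications in the minimal injective congruence can occur. I would handle this by bounding $D(\varphi)$ directly: since $D(\varphi)$ must have complexity at most $k-1$ for $\varphi$ to be relevant (by Theorem~\ref{DST}, $D(\varphi)$ divides any $X$ with $(G\times B, S) \prec X \wr (Q,T)$), and since we only care about $D(\varphi)$ up to the behavior of its minimal injective congruence on the finitely many points of $G \times B$ living over a fixed $\mathcal{J}$-class, the relevant data is carried by a transformation semigroup acting on a set of bounded size — bounded in terms of $|G|$, $|B|$, and the (now decidable, hence computably bounded) complexity-$(k-1)$ witnesses. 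Combined with the fact that $N_J^{(k)}$ is a normal subgroup of the finite group $G$ and the lattice of normal subgroups is finite, this makes the decision procedure terminate: enumerate candidate normal subgroups $N \trianglelefteq G_J$ from largest to smallest, and for each test whether $\GM_J(G_J/N)$ admits a flow of complexity at most $k-1$ using the inductively available decision procedure for complexity $k-1$ applied to the appropriately bounded evaluation transformation semigroup $\operatorname{Eval}_{k-1}$; $N_J^{(k)}$ is the smallest $N$ for which the answer is yes, and membership in it is then decided by direct inspection.
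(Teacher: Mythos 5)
Your final decision procedure --- enumerate the normal subgroups $N \trianglelefteq G_J$ and, for each, test whether $\GM_J(G_J/N)$ admits a flow of complexity at most $k-1$ by checking whether $\operatorname{Eval}_{k-1}$ relative to $\GM_J(G_J/N)$ has a contradiction, then take the minimal $N$ that passes --- is exactly the paper's proof, and it is correct. However, most of your proposal is spent on machinery the paper does not need. What you flag as ``the main obstacle'' (step (5): bounding the size of the target semigroup $T_k$, or of $D(\varphi_k)$, so that the search over relational morphisms into $C_{k-1}$ becomes finite) simply does not arise in the paper's argument: the inductive hypothesis being carried is not merely ``complexity $k-1$ is decidable'' but the full sufficient direction of Theorem~\ref{mainthm} for $k-1$, i.e.\ the \emph{equivalence} ``$\GM_J(G_J/N)$ has a flow of complexity at most $k-1$ $\iff$ $\operatorname{Eval}_{k-1}(L)$ relative to $\GM_J(G_J/N)$ has no contradiction.'' The right-hand side is a single finite computation on an explicitly constructed (and, by the induction on $k$ for $F_{k-1}(L)$, computable) transformation semigroup, so no compactness argument, no realization of $N_J^{(k)}$ by a single $\varphi_k$, and no bound on $|T_k|$ or $|D(\varphi)|$ is required. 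Your sketch of how to bound $D(\varphi)$ (``the relevant data is carried by a transformation semigroup acting on a set of bounded size'') is the one genuinely unjustified step in your write-up, but it is also dispensable: delete steps (3)--(5) and the obstacle paragraph up to its last sentence, and what remains is the paper's proof.
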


\begin{proof}
The case $k=0$ is trivial, since by definition $N_{J}^{(0)}$ is the maximal subgroup $G_J$. Assume that the assertion is true for $k-1$. 

Let $J$ be a $\mathcal{J}$-class of $\mathcal{C}(L^{2})$ and let $N$ be a normal subgroup of $G_{J}$. By the induction hypothesis for the main theorem 
we have that $\GM_{J}(G_{J}/N)$ has a flow of complexity at most $k-1$ if and only if $\operatorname{Eval}_{k-1}(L)$ relative to the semigroup 
$\GM_{J}(G_{J}/N)$ has no contradiction. This is decidable by the inductive hypothesis. We therefore can effectively compute the minimal such 
normal subgroup which by definition is $N_{J}^{(k)}$.    
\end{proof}

As the definition of $\operatorname{Eval}_{k}(L)$ involves computing all subgroups of the form $N_{J}^{(k)}(L)$, by induction on $k$ we have the 
following corollary.

\begin{cor}
Let $k\geqslant 0$. Then $F_{k}(L)$ is a computable submonoid of $\mathcal{C}(L^{2})$. Therefore $\operatorname{Eval}_{k}(L)$ is a computable 
transformation semigroup for all $k \geqslant 0$.
\end{cor}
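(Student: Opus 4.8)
The plan is to induct on $k$, reducing both computability claims to the decidability of membership in the flow spreads $N_{J}^{(k)}$ (Proposition~\ref{njkdecidable}) together with the fact that every other ingredient in the construction is an explicit manipulation of Boolean matrices or a terminating closure inside a finite set. For the base case $k=0$ one uses $F_{0}(L)=M_{0}(L)$: since $N_{J}^{(0)}=G_{J}$, rule (5F) permits $(\cdot)^{\omega+\star}$ to be applied to \emph{every} element, so $F_{0}(L)$ is the closure of a fixed finite generating set under the operations of parts (1)--(4) of the definition of $M_{k}(L)$ and under $(\cdot)^{\omega+\star}$. Each such operation --- products, the diagonal restriction, the back-flow $\overleftarrow{(\cdot)}$, and passage to the idempotent power --- is an effective computation in the finite monoid $\mathcal{C}(L^{2})\subseteq M_{n}(\mathcal{B})$ with $n=|L|$, so the saturation terminates and $F_{0}(L)$ is computed. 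Then the Vacuum $V=\bigvee_{f\in F_{0}(L)}\overleftarrow{f}$ is computed, hence so is $VF_{0}(L)V$; and $\operatorname{States}_{0}(L)$ is itself an effective bottom-up closure (the standard interpretation of WFFs, Definitions~5.1--5.3 of~\cite{HRS.2012} with paragraph~(3) removed) inside the finite lattice $LV$. Thus $\operatorname{Eval}_{0}(L)=(\operatorname{States}_{0}(L),E_{0}(L))$ is computable, which is in any case already what Theorem~\ref{mainc=1} and its surrounding discussion provide.

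For the inductive step, assume that for every lattice $L$ the transformation semigroup $\operatorname{Eval}_{k-1}(L)$ is computable, so that by the inductive hypothesis of the main theorem and the argument of Proposition~\ref{njkdecidable} membership in $N_{J}^{(k)}$ is decidable. The only new feature at level $k$ is rule (5F), whose hypothesis is that $f$ lies in $N_{J}^{(k)}$ of the $k$-th flow spread of the very monoid being built. I would construct $F_{k}(L)$ as the limit of an increasing chain $F^{(0)}\subseteq F^{(1)}\subseteq\cdots\subseteq\mathcal{C}(L^{2})$, where $F^{(0)}$ is the closure of the fixed generators under parts (1)--(4), and $F^{(i+1)}$ is the closure under (1)--(4) of $F^{(i)}$ together with all $f^{\omega+\star}$ for $f$ ranging over the elements of the submonoid generated by $F^{(i)}$ that lie in $N_{J}^{(k)}$ of its $k$-th flow spread. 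Each passage $F^{(i)}\rightsquigarrow F^{(i+1)}$ is effective: the closure under (1)--(4) is Boolean-matrix bookkeeping, and deciding membership in $N_{J}^{(k)}$ for the current finite submonoid is exactly Proposition~\ref{njkdecidable}, whose proof consumes the decidability of ``$\operatorname{Eval}_{k-1}$ relative to $\GM_{J}(G_{J}/N)$ has no contradiction''. The chain stabilizes because $\mathcal{C}(L^{2})$ is finite, and its limit $F_{k}(L)$ satisfies parts (1)--(4) and (5F) with respect to its own $k$-th flow spread, so it is the intended monoid. With $F_{k}(L)$ computed, the Vacuum $V$, the sandwich $VF_{k}(L)V$, the state set $\operatorname{States}_{k}(L)$, the action, and finally $E_{k}(L)$ and $\operatorname{Eval}_{k}(L)$ are obtained exactly as in the base case.

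The main obstacle is the self-reference in rule (5F): $F_{k}(L)$ is defined using the $k$-th flow spread of $F_{k}(L)$ itself, and one must verify that the fixed-point iteration above genuinely converges to an object satisfying (5F) --- not merely one satisfying it at each finite stage --- and that nothing committed early becomes illegitimate later. The forward part is automatic: once the chain stabilizes at $F^{(N)}=F^{(N+1)}$, every $f$ in the $k$-th flow spread of $F^{(N)}$ already has $f^{\omega+\star}\in F^{(N)}$, which is precisely (5F). The point needing care is monotonicity --- that an element $f$ whose power $f^{\omega+\star}$ was added at stage $i$ because $f$ was then pointlike for $C_{k}$ stays pointlike for $C_{k}$ in the larger semigroup generated by $F^{(i+1)}$, and that the accompanying $\mathcal{J}$-class bookkeeping is consistent under the inclusion of transformation semigroups. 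This follows from the standard monotonicity of pointlikeness under restriction of relational morphisms along a subsemigroup inclusion, combined with Theorem~\ref{maxpl} identifying $N_{J}^{(k)}$ with the maximal $C_{k}$-pointlike subgroup of $G_{J}$. Once this is settled, the corollary reduces to the observation that every ingredient of the construction --- the operators of parts (1)--(4), the flow spreads via Proposition~\ref{njkdecidable}, the Vacuum, and the standard interpretation defining $\operatorname{States}_{k}(L)$ --- is effective, and every closure runs inside a finite set, so $F_{k}(L)$ and $\operatorname{Eval}_{k}(L)$ are computable.
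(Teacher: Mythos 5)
Your proof is correct and follows the same route as the paper, which disposes of this corollary in a single sentence: induction on $k$, with Proposition~\ref{njkdecidable} supplying decidability of membership in $N_{J}^{(k)}$ and the finiteness of $\mathcal{C}(L^{2})$ guaranteeing that all closures terminate. Your explicit fixed-point treatment of the self-reference in rule (5F), together with the monotonicity-of-pointlikes check that justifies it, is a careful elaboration of what the paper leaves implicit rather than a different argument.
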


Recall that for $k>0$ the sufficient condition is that $\operatorname{Eval}_{k-1}(L)$ has no contradiction. We need to construct a transformation 
semigroup $(Q,T)$ with $Tc \leqslant k-1$ and a flow $f \colon Q \rightarrow Rh_{B}(G)$. More precisely, let $S$ be a $\GM$ semigroup. 
We assume that that $\operatorname{Eval}_{k-1}^{+}(\operatorname{SP}(G \times B))$ has no contradiction.

By Lemma~\ref{States.lem}, we can replace the latter assumption by $\operatorname{Eval}_{k-1}^{-}(\operatorname{Rh}_{B}(G))$ has no contradiction.  
For convenience we denote this transformation semigroup by $\operatorname{Eval}_{k-1}(L)$.

We now construct a relational morphism of transformation semigroups $r\colon (\operatorname{Eval}_{k-1}(L),Z) \to (W_{k-1},Z)$, where 
$Z=\operatorname{Eval}_{k-1}(L)$ is taken as generating set. It satisfies the following properties:
\begin{enumerate}
 \item{$W_{k-1}c \leqslant k-1$}
 \item{For each regular $\mathcal{J}$-class $J$ of $\operatorname{Eval}^{+}_{k-1}$, and for each idempotent $e \in W_{k-1}$, 
 $er^{-1} \cap G_{J} = N_{J}^{(k-1)}$.}
\end{enumerate}

This relational morphism is computable. This is because by Proposition~\ref{njkdecidable} membership in $N_{J}^{(k-1)}$ is decidable and by 
Theorem~\ref{maxpl} is the maximal subgroup of $G_{J}$ that is pointlike with respect to $C_{k-1}$. Therefore we can effectively enumerate all relational 
morphisms from $\operatorname{Eval}_{k-1}(L)$ to transformation semigroups of complexity at most $k-1$ and we are guaranteed effectively that at 
least one will have the property that $er^{-1} \cap G_{J} = N_{J}^{(k-1)}$ for all idempotents $e$.

We now apply the geometric semigroup expansion $\operatorname{GST}$ as defined in~\cite[Section 4]{MRS.2023}
\begin{equation}
\label{equation.construction}
	\varphi \colon \operatorname{GST}(\#r,Z)  \stackrel{\varphi}{\twoheadrightarrow} \operatorname{GST}(W_{k-1}(Rh_B),Z),
\end{equation}
where $\#r$ is the graph of $r$.

 In other words, the main construction is a subsemigroup of 
$\operatorname{Eval}_{k-1}^{+}(\operatorname{SP}(G \times B)) \times W_{k-1}(\operatorname{SP}(G \times B))$
generated by $(z,z)$ for $z\in Z$. 

\begin{lem}
\label{lemma.main}
Let $\varphi$ be the relational morphism defined above. Let $D(\varphi)$ be its derived transformation semigroup. 
Then $D(\varphi)c \leqslant 1$.
\end{lem}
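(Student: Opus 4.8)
The plan is to unwind the definition of the derived transformation semigroup $D(\varphi)$ for the relational morphism $\varphi$ of \eqref{equation.construction}, identify its states with configurations living over the states of $\operatorname{GST}(W_{k-1}(Rh_B),Z)$, and then use the Derived Semigroup Theorem (Theorem~\ref{DST}) together with the inductive hypothesis (the case $k-1$ of the main theorem, i.e.\ Theorem~\ref{mainc=1} when $k=2$) to reduce the claim $D(\varphi)c\le 1$ to showing that a certain evaluation transformation semigroup over $\operatorname{Rh}$ has no contradiction. First I would recall that, by construction, $\operatorname{GST}(\#r,Z)$ is generated inside $\operatorname{Eval}_{k-1}^{+}(\SP(G\times B))\times W_{k-1}$ by the diagonal elements $(z,z)$, and $\varphi$ is essentially the projection to the second factor followed by the geometric expansion; hence a state of $D(\varphi)$ is a pair $(p,w)$ with $w$ a state of $\operatorname{GST}(W_{k-1}(Rh_B),Z)$ and $p$ a state of $\operatorname{Eval}_{k-1}^{+}$ lying in the fiber $w\varphi^{-1}$. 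The action of a generator $(z,z)$ on such a pair is the restriction of the $\operatorname{Eval}_{k-1}^{+}$ action to that fiber.

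The key step is to show that the fibers $w\varphi^{-1}$, with the restricted action, behave like a $\GM$ transformation semigroup whose associated group is controlled by the normal subgroup spread $N_J^{(k-1)}$. Here is where property~(2) of the relational morphism $r$ is used decisively: for each regular $\mathcal{J}$-class $J$ of $\operatorname{Eval}^{+}_{k-1}$ and each idempotent $e\in W_{k-1}$, we have $er^{-1}\cap G_J = N_J^{(k-1)}$, and by Theorem~\ref{maxpl} this is exactly the maximal subgroup of $G_J$ pointlike for $C_{k-1}$. Since the new part (5F) in the definition of $F_{k-1}(L)$ was designed so that the $\omega+\star$ operator is applied precisely to elements of $N_J^{(k-1)}$, the subgroups that survive in the derived semigroup $D(\varphi)$ are exactly those $N_J^{(k-1)}$, and these give rise to a flow of complexity at most $k-2$ for the appropriate $\GM$ quotients. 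Running the argument through the identification of flows with cross-section congruences on derived transformation semigroups (the theorem preceding Definition~\ref{flownormal.def}), the statement ``$\operatorname{Eval}_{k-1}(L)$ has no contradiction'' — which we are assuming — translates into: every relevant $\GM$ slice of $D(\varphi)$ admits a flow of complexity at most $0$, i.e.\ an aperiodic flow, and hence by Theorem~\ref{mainc=1} applied slice-by-slice, $D(\varphi)$ has complexity at most $1$.

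More concretely, I would proceed as follows. Step 1: describe $D(\varphi)$ explicitly as the semigroup acting on $\#r$, noting that $\operatorname{GST}$ does not change complexity (this is proved in~\cite{MRS.2023}), so $W_{k-1}(Rh_B)c\le k-1$ still holds and $\operatorname{GST}(\#r,Z)$ has the same complexity as $\operatorname{Eval}_{k-1}^{+}$. Step 2: apply Theorem~\ref{DST} to $\varphi$ to get that $\operatorname{GST}(\#r,Z)$ divides $D(\varphi)\wr \operatorname{GST}(W_{k-1}(Rh_B),Z)$, and conversely; this pins down $D(\varphi)$ up to division as the ``difference'' in complexity between the two sides of \eqref{equation.construction}. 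Step 3: for each regular $\mathcal{J}$-class $J$ of $\operatorname{Eval}^{+}_{k-1}$ whose group $G_J$ is nontrivial, form the $\GM$ quotient $\GM_J(G_J/N_J^{(k-1)})$ and observe, using property~(2) of $r$ and the slice-theorem reformulation cited in the Remark after Definition~\ref{flownormal.def}, that the corresponding slice of $D(\varphi)$ has a flow with respect to a semigroup of complexity at most $(k-1)-1 = k-2$. Step 4: invoke the inductive hypothesis — the sufficiency direction of Theorem~\ref{mainthm} at level $k-1$, which we are permitted to assume — to conclude that each such slice has complexity at most $1$; since complexity of a semigroup is the max of the complexities of its $\GM$ slices plus the aperiodic glue, and these slices all have complexity $\le 1$ with aperiodic transitions between $\mathcal{J}$-classes, $D(\varphi)c\le 1$.

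The main obstacle I anticipate is Step 3: verifying that the fibers of $\varphi$, equipped with the restricted action, really do assemble into $\GM$ slices whose group obstruction is exactly $N_J^{(k-1)}$ rather than something larger. This requires carefully matching the definition of the subgroup $N(\varphi)$ (the maximal subgroup of $G$ inside a block of the minimal injective congruence on $D(\varphi)$) against property~(2) of $r$, and checking that the geometric semigroup expansion $\operatorname{GST}$ — which is what makes the construction ``geometric'' enough for flows to be read off — does not introduce new group obstructions. The delicate point is that it is the \emph{minimal injective} congruence on $D(\varphi)$ that matters, so one must argue that collapsing to $N_J^{(k-1)}$ is both necessary (forced by $r$) and sufficient (permitted because $N_J^{(k-1)}$ is pointlike for $C_{k-1}$, hence compatible with every relational morphism to a $C_{k-1}$ semigroup). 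Once this bookkeeping is in place, the reduction to Theorem~\ref{mainc=1} is essentially formal.
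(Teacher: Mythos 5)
Your overall strategy --- reduce $D(\varphi)$ to its $\GM$ images and verify the complexity-$1$ criterion of \cite{MRS.2023} on each --- is the same as the paper's, but the step that actually does the work is missing. The paper's proof inducts on the $\GM$ images of $D(\varphi)$ along a linear extension of the order in which $\operatorname{GM}_j > \operatorname{GM}_{j+1}$ if and only if $\operatorname{GM}_j \onto \RLM(\operatorname{GM}_j) \onto \operatorname{GM}_{j+1}$ (so that $\RLM(\operatorname{GM}_j)c \leqslant 1$ is available inductively before the complexity-$1$ criterion is applied to $\operatorname{GM}_j$), and then verifies that $\operatorname{Eval}_0(\operatorname{GM}(J_j))$ has no contradiction by a pullback of well-formed formulae: generators $Z'$ of $\operatorname{GM}(J_j)$ are lifted to elements $Z$ of $E_{k-1}^{+}$, any WFF $W(Z_1',\dots,Z_N')$ is replaced by $W(Z_1,\dots,Z_N)$, the latter has no contradiction by the standing hypothesis on $\operatorname{Eval}_{k-1}$, and hence neither does the former. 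This WFF-pullback is the entire content of the lemma, and it is absent from your proposal.

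In its place you assert that the hypothesis ``translates into'' an aperiodic flow on every $\GM$ slice of $D(\varphi)$, but the sketch you give of that translation conflates two different objects: $N_J^{(k-1)}$ is by definition the smallest normal subgroup such that the \emph{quotient} $\GM_J(G_J/N_J^{(k-1)})$ admits a flow of complexity at most $k-2$, whereas what must be shown is that the slices of $D(\varphi)$ --- whose maximal subgroups are, by property~(2) of $r$, the subgroups $N_J^{(k-1)}$ themselves, i.e.\ exactly the part \emph{not} resolved by that quotient flow --- satisfy the complexity-$1$ criterion. Your ``flow of complexity at most $k-2$'' and your ``flow of complexity at most $0$'' refer to different semigroups, and no argument connects them; clause (5F) and Theorem~\ref{maxpl} only identify the elements to which the $\omega+\star$ operator may be applied, they do not show that the resulting slices are contradiction-free at level $0$. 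Finally, your closing claim that complexity is ``the max of the complexities of the $\GM$ slices plus the aperiodic glue'' skips the chain-of-pure-$\GM$-images characterization that the paper invokes both to assemble the slice-by-slice bounds into $D(\varphi)c \leqslant 1$ and to supply the hypothesis $\RLM(\operatorname{GM}_j)c \leqslant 1$ required before Theorem~\ref{mainc=1} can be applied to a given slice.
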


\begin{proof}
We want to show that $D(\varphi)c \leqslant 1$, so we apply the main results of~\cite{MRS.2023}. 

We place an order $>$ on the $\GM$ images of any finite semigroup semigroup:
\[
	\operatorname{GM}_j > \operatorname{GM}_{j+1} \qquad \text{if and only if} \qquad
	\operatorname{GM}_j \twoheadrightarrow \operatorname{RLM}(\operatorname{GM}_j) \twoheadrightarrow \operatorname{GM}_{j+1}.
\]
The complexity $c$ of any semigroup is the longest chain of pure $\operatorname{GM}$ in this order, where a pure $\GM$ semigroup is one whose 
complexity is one more than its $\RLM$ image. See~\cite[Chapter 5]{Arbib.1968} where the result is proved for completely regular semigroups and 
the proof  generalizes to arbitrary finite semigroups.

To prove that $D(\varphi)$ has complexity at most 1, we induct on the $\GM$ image of $D (\varphi)$ by a fixed linear extension of the order
$>$ defined above. We  denote this by $\tilde{>}$. The minimum element of $\tilde{>}$ is a $0$-simple non-aperiodic $\GM$ semigroup. Therefore
$(\operatorname{GM}_{\min}) \operatorname{RLM} c = 0$ and hence $(\operatorname{GM}_{\min}) c = 1$.

Consider the first case $\operatorname{GM}_j$ which has not been considered yet in $\tilde{>}$ order, we have
\[
	\operatorname{GM}_j \twoheadrightarrow \operatorname{RLM}(\operatorname{GM}_j)  \twoheadrightarrow  \operatorname{GM}_x,
\]
where $\operatorname{GM}_x$ is any $\operatorname{GM}$ image of $ \operatorname{RLM}(\operatorname{GM}_j) $. Since all of these 
are strictly smaller than $\operatorname{GM}_j$, by induction we have $(\operatorname{GM}_j) \operatorname{RLM} c\leqslant 1$, so we can
apply~\cite{MRS.2023} to $\operatorname{GM}_j$. By~\cite{MRS.2023} we need to check that $M_0(\operatorname{GM}_j) = 
\operatorname{Eval}(\operatorname{GM}_j)$ has no contradiction.

We are working in the $\mathcal{J}$-class $J_{j}$ in the semigroup of $D(\varphi)$. 

We must show that the criterion for complexity 1 from~\cite{MRS.2023} 
holds for $\operatorname{GM}(J_j)$, that is, that $\operatorname{Eval}_{k-1}^{+}$ has no contradictions. 

Let $Z'$ be generators for $\operatorname{GM}(J_j)$. Pull them back to the semigroup of $D(\varphi)$.

Call the resulting set $Z$. By the derived category theory $Z$ determines a member $Z$ of $E_{k-1}^{+}$ and a subset of states of 
$\operatorname{Eval}_{k-1}^{+}$ denoted by $\operatorname{Sub}(Z)$. We choose an arbitrary Well-Formed Formula $W$ for 
$\operatorname{Eval}_{k-1}^{+}(\operatorname{GM}(J_{j}))$ with variables $Z_{1}', \ldots, Z_{N}'$. We now replace $W(Z_{1}', \ldots , Z_{N}')$ 
with $W(Z_{1}, \ldots , Z_{N})$. By assumption $W(Z_{1}, \ldots , Z_{N})$ has no contradictions. Therefore  $W(Z_{1}', \ldots , Z_{N}')$ has no contradictions. 
The main result of \cite{MRS.2023} now implies that $D(\varphi)$ has complexity at most 1. 
\end{proof}

In conclusion, we summarize the proof of the theorem. Let $S$ be a $\GM$ semigroup and $k > 0$. By induction on cardinality we can 
decide if $\RLM(S)c \leqslant k$. If the answer is no, then $Sc>k$. If by induction on $k$ we have $\RLM(S)c < k$, then $Sc \leqslant k$. So we can 
assume that $\RLM(S)c = k$. We assume that $\operatorname{Eval}_{k-1}(L)$ has no contradiction. Let 
$\varphi \colon \operatorname{GST}(\operatorname{Eval}_{k-1}) \rightarrow \operatorname{GST}(W_{k-1})$ be the relational 
morphism we have constructed above. The main part of the proof shows that the derived transformation semigroup $D(\varphi)$ has complexity 
at most 1. More precisely, having no contradictions in $\operatorname{Eval}_{k-1}$ implies 
that there are no contradictions in $\operatorname{Eval}_{0}$ of any $\GM$ image of $D(\varphi)$.

\bibliographystyle{plain}
\bibliography{complexity-n}{}

{(Stuart Margolis) Department of Mathematics, Bar Ilan University, Ramat Gan 52900, Israel

{\it Email address}: \; \texttt{margolis@math.biu.ac.il}}

\medskip

{(John Rhodes) Department of Mathematics, University of California, Berkeley, CA 94720, U.S.A.

{\it Email address}: \; \texttt{rhodes@math.berkeley.edu, blvdbastille@gmail.com}}

\medskip

{\large (Anne Schilling) Department of Mathematics, UC Davis, 
One Shields Ave., Davis, CA 95616-8633

{\it Email address}:\;\texttt{anne@math.ucdavis.edu}}

\end{document}